\theoremstyle{plain} 
\newtheorem{theorem}{Theorem} 
\newtheorem{lemma}[theorem]{Lemma} 
\newtheorem{definition}[theorem]{Definition} 
\newtheorem{example}{Example} 
\theoremstyle{remark} 
\newtheorem*{remark}{Remark} 
\newtheorem*{conjecture}{Conjecture}
\DeclareMathOperator{\capp}{cap}
\DeclareMathOperator{\supp}{supp}
\DeclareMathOperator{\inte}{int}
\DeclareMathOperator{\exte}{ext}
\DeclareMathOperator{\mre}{Re}
\DeclareMathOperator{\mim}{Im}
\DeclareMathOperator{\Tr}{Tr}
\DeclareMathOperator{\Res}{Res}
\DeclareMathOperator{\clos}{clos}
\DeclareMathOperator{\ind}{ind}
\DeclareMathOperator{\ac}{ac}
\DeclareMathOperator{\ess}{ess}
\DeclareMathOperator{\codim}{codim}
\DeclareMathOperator{\sgn}{sgn}
\begin{document} 

\title[Plasmonic eigenvalue problem for corners]{Plasmonic eigenvalue problem for corners: limiting absorption principle and absolute continuity in the essential spectrum}
\date{\today} 

\author{Karl-Mikael Perfekt}
\address{Department of Mathematics and Statistics,
	University of Reading, Reading RG6 6AX, United Kingdom}
\email{k.perfekt@reading.ac.uk}

\begin{abstract}
We consider the plasmonic eigenvalue problem for a general 2D domain with a curvilinear corner, studying the spectral theory of the Neumann--Poincar\'e operator of the boundary. A limiting absorption principle is proved, valid when the spectral parameter approaches the essential spectrum. Putting the principle into use, it is proved that the corner produces absolutely continuous spectrum of multiplicity 1. The embedded eigenvalues are discrete. In particular, there is no singular continuous spectrum.

\end{abstract}

\subjclass[2010]{}
\maketitle
\section{Introduction}
{
Let $\Gamma$ be a piecewise $C^3$ Jordan curve in $\mathbb{R}^2 \simeq \mathbb{C}$ with a single curvilinear corner of angle $\alpha \neq \pi$, $0 < \alpha < 2\pi$, see Figure~\ref{fig:Gamma}. We refer to the interior and exterior of $\Gamma$ as $\Gamma_{\inte}$ and $\Gamma_{\exte}$. For $\epsilon_r \in \mathbb{C}$, $\epsilon_r \notin \{0,1\}$, a relative permittivity which we also understand as the spectral parameter, the plasmonic eigenvalue problem seeks a potential $U \colon \Gamma_{\inte} \cup \Gamma_{\exte} \to \mathbb{C}$ such that $U(x) = o(1)$, $x \to \infty$, and 
\begin{equation} \label{eq:plasmonicproblem}
\begin{cases}
\Delta U(x) = 0, \quad &x \in \Gamma_{\inte} \cup \Gamma_{\exte}, \\
\Tr_{\inte} U(x) = \Tr_{\exte} U(x), \quad &x \in \Gamma, \\
\partial_{n}^{\exte} U(x) = \epsilon_r \partial_{n}^{\inte} U(x), \quad &x \in \Gamma.
\end{cases}
\end{equation}
Here $\Tr_{\inte} U$ and $\Tr_{\exte} U$ refer to interior and exterior traces, the limiting boundary values of $U$ on $\Gamma$ from the inside and outside of $\Gamma$, respectively. Similarly, $\partial_{n}^{\inte} U$ and $\partial_{n}^{\exte} U$ denote outward unit normal derivatives on the boundary, calculated with interior and exterior approach. In general, these are understood in a distributional sense based on Green's formula.

\begin{figure}
	\centering
	\includegraphics[width =0.8\linewidth]{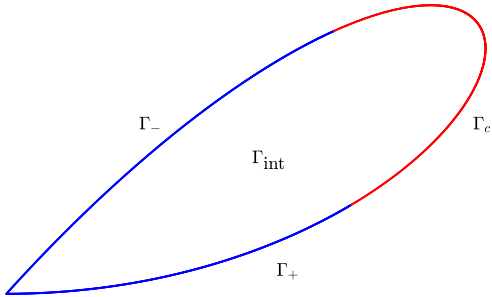}
	\caption{A piecewise $C^3$-curve $\Gamma$ with a corner.}
	\label{fig:Gamma}
\end{figure}

To solve the plasmonic problem, we make an ansatz with a double layer potential, see Example~\ref{ex:polar}, which leads to the eigenvalue problem 
$$(K - \lambda) \rho = 0, \quad \lambda = \frac{\epsilon_r + 1}{\epsilon_r - 1}.$$
Here $K$ denotes the Neumann--Poincar\'e (NP) operator, or the direct value of the double layer potential,
$$K\rho(x) = \frac{1}{\pi} \int_{\Gamma} \frac{\langle y - x, n_y \rangle}{|x-y|^2} \, \rho(y) \, d\sigma(y), \quad x \in \Gamma,$$
where $n_y$ denotes the unit outward normal at $y \in \Gamma$, and $\sigma$ is the arc length measure along $\Gamma$. To understand the plasmonic eigenvalue problem we will thus investigate the NP operator $K$. We refer the reader to \cite{AMRZ17, AK16} for further study and illustration of the connection between the spectrum of the NP operator and plasmon resonances.

Because $\Gamma$ has a corner, the spectral theory of $K$ has a number of rather special features. It is known \cite{Mit02, Shele90}, that there is a Jordan curve $\Sigma_{1,\alpha}$ with interior $\widetilde{\Sigma}_{1,\alpha}$, illustrated in Figure~\ref{fig:Sigma}, such that every $\lambda \in -\widetilde{\Sigma}_{1,\alpha} \cup \widetilde{\Sigma}_{1,\alpha}$ is an eigenvalue of $K \colon L^2(\Gamma) \to L^2(\Gamma)$. On the other hand, if we consider $K$ as an operator on the Sobolev space $H^{1/2}(\Gamma)$ we shall prove the following theorem. Note that eigenvectors to eigenvalues $\lambda \neq 1$ of $K \colon H^{1/2}(\Gamma) \to H^{1/2}(\Gamma)$ correspond to solutions of \eqref{eq:plasmonicproblem} with $\int_{\mathbb{R}^2} | \nabla U |^2 \, dx < \infty$, see Example~\ref{ex:polar}.
 \setcounter{theorem}{0}
\renewcommand{\thetheorem}{\Alph{theorem}}
\begin{theorem} \label{thm:main1}
The point spectrum of $K \colon H^{1/2}(\Gamma) \to H^{1/2}(\Gamma)$ consists of a simple eigenvalue at $\lambda = 1$ and a (possibly finite) sequence $\{\lambda_n\} \subset (-1,1)$ that is symmetric around $\lambda = 0$ and which can only accumulate at $0$ and $\pm |1-\alpha/\pi|$,
$$\sigma_{p}(K, H^{1/2}(\Gamma)) = \{\lambda_n\}\cup \{1\}.$$
\end{theorem}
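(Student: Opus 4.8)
The plan is to reduce the spectral analysis of $K$ on $H^{1/2}(\Gamma)$ to a more tractable model and exploit self-adjointness-type structure. First I would recall the symmetrization principle: on the energy space, $K$ is (unitarily equivalent to) a self-adjoint operator. Concretely, the single layer potential $S$ induces an inner product on $H^{-1/2}(\Gamma)$ (the Plemelj--Kellogg identity $SK^* = KS$ shows $K$ is self-adjoint with respect to $\langle Sf, g\rangle$), and via the Dirichlet-to-Neumann correspondence this transfers to a self-adjoint realization on $H^{1/2}(\Gamma)$. Hence the point spectrum is real, and the eigenvalue at $\lambda=1$ (corresponding to constants, or to the harmonic function that is constant inside and decays outside) is easily identified and checked to be simple by a maximum-principle / uniqueness argument for \eqref{eq:plasmonicproblem}. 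The symmetry $\lambda_n \leftrightarrow -\lambda_n$ should follow from the standard conjugation: if $U$ solves the transmission problem with parameter $\epsilon_r$, then swapping the roles of $\Gamma_{\inte}$ and $\Gamma_{\exte}$ (equivalently using the harmonic conjugate, or the Kelvin-type inversion exchanging interior and exterior) sends $\epsilon_r \mapsto 1/\epsilon_r$, i.e. $\lambda \mapsto -\lambda$, and preserves the finite-energy condition.

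The heart of the matter is the localization at the corner. Away from the corner $\Gamma$ is $C^3$, so the NP operator is compact there; thus modulo compact operators $K$ is equivalent to the NP operator $K_\alpha$ of an infinite straight wedge of opening $\alpha$. By a Mellin transform (passing to the variable $t = \log r$ along the two rays of the wedge, exploiting the scaling invariance of the wedge), $K_\alpha$ becomes a direct integral / multiplication operator whose symbol is an explicit $2\times 2$ matrix-valued function of the Mellin parameter; its spectrum is the curve $\Sigma_{1,\alpha}$ and the essential spectrum of $K$ on $L^2$. The key point for $H^{1/2}$ is that the relevant function space, after the Mellin transform, carries a \emph{different} weight than the $L^2$ case — the $H^{1/2}$-norm near the corner corresponds, in Mellin variables, to integrating along a \emph{shifted} contour $\mre z = 1/2$ rather than $\mre z = 0$. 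On that shifted contour the symbol of the wedge operator has spectrum only the two real points $\pm|1-\alpha/\pi|$ (this is the standard computation, e.g. the eigenvalues of the wedge are $\frac{\sinh((\pi-\alpha)\xi)}{\sinh(\pi \xi)}$ type expressions evaluated appropriately). Consequently the essential spectrum of $K$ on $H^{1/2}(\Gamma)$ is the segment $[-|1-\alpha/\pi|, |1-\alpha/\pi|]$, and eigenvalues can accumulate only at its endpoints and (from the sequence itself, by compactness considerations away from the essential spectrum) nowhere in the resolvent set except possibly at $0$ — which is the other spectral point dictated by the symbol. Outside $[-|1-\alpha/\pi|,|1-\alpha/\pi|] \cup \{1\}$ the operator $K-\lambda$ is Fredholm of index $0$, so the point spectrum there is discrete; combined with reality and the symmetry, this yields a sequence $\{\lambda_n\}\subset(-1,1)$ accumulating only at $0$ and $\pm|1-\alpha/\pi|$.

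More precisely, the logical skeleton I would write is: (i) self-adjointness of $K$ on the energy space and consequent reality of $\sigma_p$; (ii) identification and simplicity of $\lambda = 1$; (iii) the $\lambda \mapsto -\lambda$ symmetry via interior/exterior duality; (iv) a parametrix/localization lemma showing $K$ on $H^{1/2}(\Gamma)$ equals $K_\alpha$ (the model wedge operator on the natural weighted space) plus compact, hence $\sigma_{\ess}(K, H^{1/2}(\Gamma)) = [-|1-\alpha/\pi|, |1-\alpha/\pi|]$; (v) Fredholmness of $K - \lambda$ for $\lambda \notin \sigma_{\ess} \cup \{1\}$ with index $0$, giving discreteness of eigenvalues there and the allowed accumulation points. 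The main obstacle is step (iv): one must set up the correct scale of weighted Sobolev spaces on the wedge so that the global $H^{1/2}(\Gamma)$ norm is captured, verify that the cutoff commutators and the curvature corrections (the curve is only piecewise $C^3$, not exactly a wedge) are genuinely compact on $H^{1/2}$, and carry out the Mellin-symbol computation correctly on the shifted contour $\mre z = 1/2$ — in particular ruling out any spectrum of the symbol off the real axis there, which is what forces the essential spectrum to be a real interval rather than the lens-shaped region $\widetilde{\Sigma}_{1,\alpha}$ seen in the $L^2$ theory. Getting the endpoints exactly equal to $\pm|1-\alpha/\pi|$ and the interior accumulation point exactly $0$ is the crux and will require the explicit diagonalization of the $2\times2$ Mellin symbol.
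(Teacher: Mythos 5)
Your steps (i)--(iii) (self-adjointness in the energy norm, simplicity of $\lambda=1$, the $\lambda\mapsto-\lambda$ symmetry) and your step (iv) (essential spectrum equals $[-|1-\alpha/\pi|,|1-\alpha/\pi|]$, which the paper simply quotes from the literature) are fine in outline. The genuine gap is step (v): Fredholmness of $K-\lambda$ on $H^{1/2}(\Gamma)$ with index $0$ holds, by definition of the essential spectrum of a self-adjoint operator, only for $\lambda$ \emph{outside} $[-|1-\alpha/\pi|,|1-\alpha/\pi|]$, so your argument only shows that the eigenvalues lying outside the essential spectrum are discrete and accumulate at $\pm|1-\alpha/\pi|$. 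It says nothing about eigenvalues \emph{embedded} in the essential spectrum, and these do occur (the paper cites \cite{LS19} for explicit constructions). A priori the embedded point spectrum of a self-adjoint operator could be dense in the interval; ruling this out is precisely the content of Theorem~\ref{thm:main1} that requires new input, and your proposal contains no mechanism for it --- the phrase ``accumulate \dots nowhere in the resolvent set except possibly at $0$'' does not address points interior to the essential spectrum, where no parametrix on $H^{1/2}(\Gamma)$ exists. The paper's route is to leave $H^{1/2}(\Gamma)$: it introduces the family $T_\lambda(c,f)=(K-\lambda)(c\,q_{\lambda,\alpha}+f)$ on $\mathbb{C}\oplus H^1(\Gamma)$, where $q_{\lambda,\alpha}$ is a single explicit singular function with exponent $\mu_\alpha(\lambda)$; on $H^1(\Gamma)$ the operator $K-\lambda$ \emph{is} Fredholm for $\lambda$ off the curve $\Sigma_{1,\alpha}$ (Theorem~\ref{thm:fredholm}), the Mellin analysis (Theorem~\ref{thm:mellinanalysis}) shows the $H^{1/2}$-resolvent of $H^1$ data lives in $\mathbb{C}q_{\lambda,\alpha}+H^1(\Gamma)$, and the analytic Fredholm theorem applied to $\lambda\mapsto T_\lambda$ on $\widetilde{\Sigma}_{1,\alpha}\setminus\gamma_{1,\alpha}$ (Lemma~\ref{lem:anal}, Theorem~\ref{thm:Tinvert}) plus the identification of the real poles of $T_\lambda^{-1}$ with embedded eigenvalues (Lemma~\ref{lem:eigenvalues}) yields discreteness of the embedded eigenvalues, with possible accumulation only at the boundary points $0$ and $\pm|1-\alpha/\pi|$ of that region. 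Without something playing this role, the accumulation statement in Theorem~\ref{thm:main1} is not proved.

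Two secondary corrections to your Mellin discussion. First, the contours are reversed: with the paper's normalization, the $L^2$-theory corresponds to the line $\mre z=1/2$ (yielding the lens-shaped curve $\Sigma_{1,\alpha}$), while the $H^{1/2}$-theory brings in the line $\mre z=0$, on which $\widehat{K}_\alpha(iy)=\sinh((\pi-\alpha)y)/\sinh(\pi y)$ sweeps out the whole segment $[0,1-\alpha/\pi]$ (not ``only the two real points $\pm|1-\alpha/\pi|$''); the full interval $[-|1-\alpha/\pi|,|1-\alpha/\pi|]$ then arises from the even/odd decomposition across the two arcs, which produces both $K_\alpha$ and $-K_\alpha$. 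Second, note that the symbol is scalar on each parity component rather than a $2\times2$ matrix after this symmetrization; this is also how the symmetry of the model operator is exploited in the paper's Theorem~\ref{thm:Gammaanalysis}.
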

The action of $K$ on $H^{1/2}(\Gamma)$ is distinguished, in that $H^{1/2}(\Gamma)$ may be given an equivalent special norm $\| \cdot  \|_{\mathcal{E'}}$ in which $K \colon H^{1/2}(\Gamma) \to H^{1/2}(\Gamma)$ is self-adjoint. By dilation we may assume that the logarithmic capacity of $\Gamma$ satisfies $\capp(\Gamma) < 1$. Then any $f \in H^{1/2}(\Gamma)$ may be written $f = Sg$ for a unique $g \in H^{-1/2}(\Gamma)$, where $S$ is the direct value of the single layer potential on $\Gamma$, and then
$$\|f\|_{\mathcal{E'}}^2 = \langle g, f \rangle_{L^2(\Gamma)}.$$
Here we have chosen the sign of $S$ so that this is a positive quantity, see Section~\ref{subsec:singlelayer}.
When speaking of $K \colon H^{1/2}(\Gamma) \to H^{1/2}(\Gamma)$ as a self-adjoint operator, we always consider $H^{1/2}(\Gamma)$ to be equipped with the norm $\| \cdot  \|_{\mathcal{E'}}$.

The essential spectrum of $K \colon H^{1/2}(\Gamma) \to H^{1/2}(\Gamma)$ has been computed in \cite{BZ19, PP17}, with two somewhat different approaches,
$$\sigma_{\ess}(K, H^{1/2}(\Gamma)) = [-|1-\alpha/\pi|, |1-\alpha/\pi|].$$
Both contributions draw on variational formulations of the plasmonic problem \eqref{eq:plasmonicproblem}. It is in fact rather difficult to give a useful description of the $H^{1/2}(\Gamma)$-norm on $\Gamma$, see Section~\ref{sec:prelim}. In \cite{LS19}, the authors constructed curves $\Gamma$ for which some of the eigenvalues $\{\lambda_n\}$ are embedded in $[-|1-\alpha/\pi|, |1-\alpha/\pi|]$. See also \cite{HKL17} for numerical examples.

\begin{figure}
	\centering
	\includegraphics[width =0.8\linewidth]{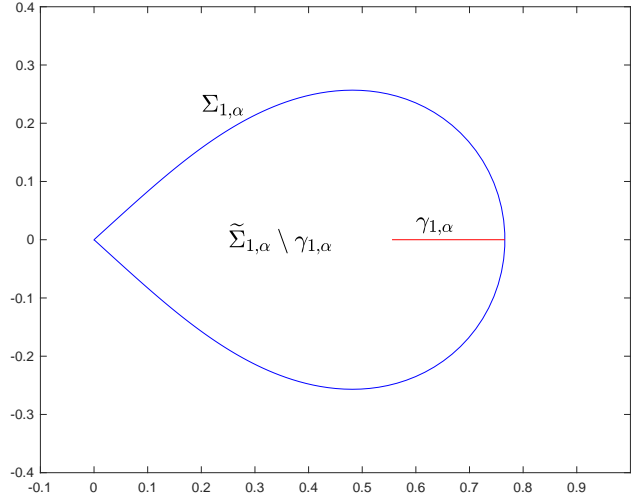}
	\caption{The sets $\Sigma_{1, \alpha}$, $\gamma_{1,\alpha}$, and $\widetilde{\Sigma}_{1,\alpha} \setminus \gamma_{1,\alpha}$ illustrated for $\alpha = 4\pi/9$. }
	\label{fig:Sigma}
\end{figure}

The second main theorem is a limiting absorption principle which holds when the spectral parameter $\lambda$ approaches the essential spectrum. For simplicity, let us describe the result in the introduction only when $0 < \alpha < \pi$ and $\mre \lambda > 0$. The case when $\alpha > \pi$ requires only a few sign-changes, and the spectral properties of $K$ and $-K$ on $H^{1/2}(\Gamma)$ are nearly identical due to a symmetry. 

Let $\gamma_{1, \alpha} = [1-\alpha/\pi, \sin((\pi-\alpha)/2)]$. Then $(0, 1-\alpha/\pi)$ is contained in the region $\widetilde{\Sigma}_{1,\alpha} \setminus \gamma_{1, \alpha}$. We will consider a certain biholomorphic map
\begin{equation} \label{eq:intromu}
\mu_{\alpha} \colon \widetilde{\Sigma}_{1,\alpha} \setminus \gamma_{1, \alpha} \to \{z \in \mathbb{C} \, : \, |\mre z| < 1/2, \; \mim z < 0\}
\end{equation}
such that $\mu_{\alpha}((0,1-\alpha/\pi)) = i\mathbb{R}_-$, $\mu_\alpha(\bar{\lambda}) = -\overline{\mu_{\alpha}(\lambda)}$, and $\mre \mu_{\alpha}(\lambda) < 0$ for $\mim \lambda > 0$. To each $\lambda \in \widetilde{\Sigma}_{1,\alpha} \setminus \gamma_{1, \alpha}$ we attach a singular function $q_\lambda \in L^2(\Gamma)$ such that
$$q_\lambda(x) = \phi(s)s^{-\mu_{\alpha}(\lambda)},$$
where $s$ denotes the arc length distance of $x$ to the corner and $\phi$ is a cut-off function around $s = 0$. Note that when $\mim \lambda > 0$, the operator $K-\lambda \colon H^{1/2}(\Gamma) \to H^{1/2}(\Gamma)$ is invertible by self-adjointness.
\begin{theorem} \label{thm:main2}
	For $g \in H^1(\Gamma)$ and $\lambda \in \widetilde{\Sigma}_{1,\alpha}$ with $\mim \lambda > 0$, let 
	$$f_\lambda = (K-\lambda)^{-1} g \in H^{1/2}(\Gamma).$$
	Then there are unique $c_\lambda \in \mathbb{C}$ and $\tilde{f}_\lambda \in H^1(\Gamma)$ such that
	$$f_\lambda = c_\lambda q_\lambda + \tilde{f}_\lambda.$$
	Let $H_\lambda \colon H^1(\Gamma) \to \mathbb{C} \oplus H^1(\Gamma)$ be the corresponding map 
	$$H_\lambda g = (c_\lambda, \tilde{f}_\lambda).$$
	Then the operator-valued function $\lambda \to H_\lambda$ has a meromorphic extension to $\widetilde{\Sigma}_{1,\alpha} \setminus \gamma_{1, \alpha}$ with simple poles at $\{\lambda_n\} \cap (0, 1-\alpha/\pi)$, and no other poles in $(0, 1-\alpha/\pi)$. 
	
	In particular, if $u \in (0, 1-\alpha/\pi)$ and $u$ is not an eigenvalue of $K \colon H^{1/2}(\Gamma) \to H^{1/2}(\Gamma)$, then the limit
	$$h_u = \lim_{\varepsilon \to 0^+} (K-(u+i\varepsilon))^{-1} g$$
	exists in $L^2(\Gamma)$ and is of the form $h_u = c_u q_u + \tilde{h}_u$, where $c_u \in \mathbb{C}$ and $\tilde{h}_u \in H^1(\Gamma)$. Furthermore, $h_u$ is the unique solution of this form of the equation
	$$(K-u)h_u = g.$$
\end{theorem}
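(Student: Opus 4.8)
\emph{Overview.} Since the essential spectrum of $K$ on $H^{1/2}(\Gamma)$ is produced entirely by the corner, the plan is: (i) localise with a partition of unity and reduce the study of $K-\lambda$ near $\widetilde{\Sigma}_{1,\alpha}$ to that of the Neumann--Poincar\'e operator $\mathbb{K}_\alpha$ of an infinite straight wedge of opening $\alpha$; (ii) diagonalise $\mathbb{K}_\alpha$ explicitly by a Mellin transform in the arc-length distance $s$ to the vertex and extract the decomposition $f_\lambda=c_\lambda q_\lambda+\tilde f_\lambda$ as the residue picked up when the Mellin inversion contour is pushed from the line attached to $L^2(\Gamma)$ to the line attached to $H^1(\Gamma)$, checking that the resulting model map $\lambda\mapsto H^{\mathrm{mod}}_\lambda$ is \emph{holomorphic} on $\widetilde{\Sigma}_{1,\alpha}\setminus\gamma_{1,\alpha}$; (iii) transfer back to $\Gamma$, where $K-\mathbb{K}_\alpha$ is smoothing near the vertex because $\Gamma$ is piecewise $C^3$, and conclude by analytic Fredholm theory; (iv) obtain the boundary limit by continuity. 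After the substitution $s=e^{-t}$ the wedge operator $\mathbb{K}_\alpha$ is a $2\times2$ convolution system in $t$ on the two edges; splitting into the parts even and odd across the vertex diagonalises it, and the Fourier transform in $t$ (equivalently, testing against $s^{-\mu}$) turns $\mathbb{K}_\alpha$ into multiplication by scalar symbols $k^\pm_\alpha(\mu)$ meromorphic in $\mu$. The relation $\lambda=k^\pm_\alpha(\mu)$ defines the biholomorphism $\mu_\alpha$, so $s^{-\mu_\alpha(\lambda)}$ is a generalised eigenfunction of $\mathbb{K}_\alpha$ at eigenvalue $\lambda$; hence $(\mathbb{K}_\alpha-\lambda)q_\lambda\in H^1(\Gamma)$, since the commutator $[\mathbb{K}_\alpha,\phi]$ is supported away from the vertex.

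\emph{The model resolvent.} The key analytic point is to pin down the zeros of $\mu\mapsto\det(\text{symbol})-\lambda$ in the strip of width one separating the Mellin lines for $L^2(\Gamma)$ and for $H^1(\Gamma)$: for $\lambda\in\widetilde{\Sigma}_{1,\alpha}\setminus\gamma_{1,\alpha}$ there is exactly one, a simple zero, located precisely at $\mu_\alpha(\lambda)$ — the segment $\gamma_{1,\alpha}$ being excised because it is where a second zero enters the strip, or where two zeros collide. Given $g\in H^1(\Gamma)$, whose Mellin transform is holomorphic and $L^2$ on the $H^1$-line and in a fixed neighbouring strip, applying $(\mathbb{K}_\alpha-\lambda)^{-1}$ and shifting the inversion contour across that single simple pole gives $(\mathbb{K}_\alpha-\lambda)^{-1}g=c_\lambda q_\lambda+(\text{a term holomorphic and }L^2\text{ on the }H^1\text{-line})$, where $c_\lambda$ is the residue of $(\text{symbol}-\lambda)^{-1}\widehat g$ at $\mu_\alpha(\lambda)$; this depends holomorphically on $\lambda$ throughout $\widetilde{\Sigma}_{1,\alpha}\setminus\gamma_{1,\alpha}$ because $\widehat g$ is holomorphic in a fixed strip and the zero is simple. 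Since $\mre\mu_\alpha(\lambda)\in(-1/2,1/2)$ one checks $q_\lambda\in H^{1/2}(\Gamma)\setminus H^1(\Gamma)$, so the splitting $\mathbb{C}q_\lambda\oplus H^1(\Gamma)$ is genuine and the decomposition unique. I expect this step, together with the perturbation control in the next paragraph, to be the main obstacle: it requires the exact zero count (hence the precise roles of $\gamma_{1,\alpha}$ and $\mu_\alpha$) and careful handling of Sobolev spaces near the vertex, where, as noted in the introduction, the $H^{1/2}$-geometry of $\Gamma$ is delicate.

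\emph{Transfer and analytic Fredholm theory.} Writing $\Gamma$ as the corner piece glued to a $C^3$ arc, the cut-off difference $K-\mathbb{K}_\alpha$ gains regularity near the vertex (the curvature terms make its kernel less singular), so it maps $L^2(\Gamma)$, and $\mathbb{C}q_\lambda$, into $H^1(\Gamma)$ with bounds locally uniform in $\lambda$. Define, for $\lambda\in\widetilde{\Sigma}_{1,\alpha}\setminus\gamma_{1,\alpha}$,
$$G_\lambda\colon\mathbb{C}\oplus H^1(\Gamma)\to H^1(\Gamma),\qquad G_\lambda(c,\tilde f)=(K-\lambda)(cq_\lambda+\tilde f),$$
well-defined by the previous remark and holomorphic in $\lambda$; composing with the model inverse of the previous paragraph yields a parametrix showing $G_\lambda$ is Fredholm of index $0$. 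For $\mim\lambda>0$, $K-\lambda$ is invertible on $H^{1/2}(\Gamma)$, and rerunning the contour argument for the true $\Gamma$ (using that $K-\mathbb{K}_\alpha$ is smoothing) shows $(K-\lambda)^{-1}g\in\mathbb{C}q_\lambda\oplus H^1(\Gamma)$, so $G_\lambda$ is invertible there with $G_\lambda^{-1}=H_\lambda$. As $\widetilde{\Sigma}_{1,\alpha}\setminus\gamma_{1,\alpha}$ is connected and $G_\lambda$ is invertible on a nonempty open subset, the analytic Fredholm theorem gives a meromorphic extension of $\lambda\mapsto H_\lambda$ to all of $\widetilde{\Sigma}_{1,\alpha}\setminus\gamma_{1,\alpha}$, with poles exactly where $\ker G_\lambda\neq\{0\}$. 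A nonzero element of $\ker G_\lambda$ is a $cq_\lambda+\tilde f\in H^{1/2}(\Gamma)$ with $(K-\lambda)(cq_\lambda+\tilde f)=0$, i.e.\ an eigenvector of $K\colon H^{1/2}(\Gamma)\to H^{1/2}(\Gamma)$; conversely, every eigenvector at $\lambda\in(0,1-\alpha/\pi)$ has this form by the elliptic regularity just used. Hence the poles in $(0,1-\alpha/\pi)$ are exactly $\sigma_p(K,H^{1/2}(\Gamma))\cap(0,1-\alpha/\pi)=\{\lambda_n\}\cap(0,1-\alpha/\pi)$ by Theorem~\ref{thm:main1}, and they are simple because $K$ is self-adjoint on $(H^{1/2}(\Gamma),\|\cdot\|_{\mathcal{E'}})$, so its resolvent — and therefore $H_\lambda$ — has only simple poles at real eigenvalues.

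\emph{The boundary limit.} Let $u\in(0,1-\alpha/\pi)$ not be an eigenvalue. Since $(0,1-\alpha/\pi)\subset\widetilde{\Sigma}_{1,\alpha}\setminus\gamma_{1,\alpha}$ and $H_\lambda$ is holomorphic near $\lambda=u$, we have $H_{u+i\varepsilon}\to H_u$ in operator norm as $\varepsilon\to0^+$, so $(c_{u+i\varepsilon},\tilde f_{u+i\varepsilon})=H_{u+i\varepsilon}g\to H_u g=:(c_u,\tilde f_u)$ in $\mathbb{C}\oplus H^1(\Gamma)$. As $\mu_\alpha$ is continuous up to the segment $(0,1-\alpha/\pi)$, where it takes values in $i\mathbb{R}_-$, and $\mu\mapsto\phi(\cdot)\,(\cdot)^{-\mu}$ is continuous into $L^2(\Gamma)$ for $\mre\mu<1/2$, we get $q_{u+i\varepsilon}\to q_u$ in $L^2(\Gamma)$, hence $f_{u+i\varepsilon}\to c_uq_u+\tilde f_u=:h_u$ in $L^2(\Gamma)$. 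Passing to the limit in $(K-(u+i\varepsilon))f_{u+i\varepsilon}=g$ and using that $K$ is bounded on $L^2(\Gamma)$ (a corner is locally a Lipschitz graph, so Coifman--McIntosh--Meyer applies) gives $(K-u)h_u=g$ with $h_u=c_uq_u+\tilde h_u$, $\tilde h_u=\tilde f_u\in H^1(\Gamma)$. For uniqueness among solutions of this form: the difference of two such lies in $\mathbb{C}q_u\oplus H^1(\Gamma)\subset H^{1/2}(\Gamma)$ and in $\ker(K-u)$, which is trivial since $u$ is not an eigenvalue of $K$ on $H^{1/2}(\Gamma)$.
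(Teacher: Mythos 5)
Your architecture for the meromorphic continuation coincides with the paper's: the model Mellin analysis with the single simple zero of $\widehat{K}_\alpha-\lambda$ in the relevant strip, the operator $G_\lambda(c,\tilde f)=(K-\lambda)(cq_\lambda+\tilde f)$ (this is exactly the paper's $T_\lambda$), the parametrix/Fredholm step, invertibility for $\mim\lambda>0$ via the decomposition of $(K-\lambda)^{-1}g$, and the analytic Fredholm theorem. The genuine gap is in how you handle the \emph{real} points $u\in(0,1-\alpha/\pi)$: there $\mu_\alpha(u)\in i\mathbb{R}_-$ is purely imaginary and nonzero, so $q_u(x)=\phi(s)s^{-\mu_\alpha(u)}$ is a log-oscillatory function whose Mellin transform has a simple pole \emph{on} the critical line $\mre z=0$; by \eqref{eq:12norm} this means $q_u\notin H^{1/2}(\Gamma)$. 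Consequently your claim that a nonzero element $cq_u+\tilde f$ of $\ker G_u$ lies in $H^{1/2}(\Gamma)$ and is therefore an eigenvector is false when $c\neq 0$: such an element would be a ``resonance state'' lying outside $H^{1/2}(\Gamma)$, and nothing in your argument excludes these, so ``no other poles in $(0,1-\alpha/\pi)$'' is not proved. The converse inclusion (every embedded eigenvalue is a pole) is also unproved as written, because the regularity/decomposition theorem you invoke was established only for $\mim\lambda\neq0$; at real $u$ the symbol $\widehat{K}_\alpha-u$ vanishes on the critical line and the contour argument does not directly apply to an eigenvector. The same false inclusion $\mathbb{C}q_u\oplus H^1(\Gamma)\subset H^{1/2}(\Gamma)$ is used again in your uniqueness argument for $h_u$. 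Finally, simplicity of the real poles does not follow from ``the resolvent of a self-adjoint operator has simple poles'': these eigenvalues are embedded in the essential spectrum, where $(K-\lambda)^{-1}$ has no meromorphic continuation at all; it is only the restricted map $H_\lambda$ that continues, and its pole order must be bounded by a separate argument.

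The paper closes exactly these gaps in Lemma~\ref{lem:eigenvalues} by a spectral-measure (Herglotz-type) argument that never needs regularity of eigenfunctions at real spectral parameters: if $u$ is an embedded eigenvalue, choosing $g\in H^1(\Gamma)$ close to a normalized eigenvector gives $y\langle (K-(u+iy))^{-1}g,g\rangle_{\mathcal{E}'}\to i\nu_{g,g}(\{u\})\neq0$, which contradicts the convergence $T_{u+iy}^{-1}g\to T_u^{-1}g$ in $L^2(\Gamma)$ that would hold if $u$ were not a pole; conversely, at a pole of order $n$ one can arrange data making $\mim\langle y^n(K-(u+iy))^{-1}f,h\rangle_{\mathcal{E}'}$ bounded below, and the existence of the finite limit $\lim_{y\to0^+}\mim\langle y(K-(u+iy))^{-1}f,h\rangle_{\mathcal{E}'}=\mre\nu_{f,h}(\{u\})$ forces $n=1$ and $E(\{u\})\neq0$. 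You need this (or an equivalent boundary-pairing argument ruling out $c\neq0$ resonance states and capping the pole order) to obtain the pole location, simplicity, and the uniqueness statement for $h_u$; the rest of your proposal then goes through essentially as in the paper.
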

Thus, by a Sobolev embedding theorem, the limiting solution has the asymptotics
$$h_u(x) = d_u + c_us^{-\mu_{\alpha}(u)} + O(\sqrt{s}),$$
for $x$ close to the corner, for some $c_u, d_u \in \mathbb{C}$. Recall here that $\mre \mu_{\alpha}(u) = 0$ for $u \in (0, 1-\alpha/\pi)$. One could of course have approached the essential spectrum in $\mim \lambda < 0$ instead. One would then obtain a similar statement with singular functions of the form $\tilde{q}_\lambda(x) = \phi(s) s^{\mu_\alpha(\lambda)}$. 

Many authors prefer to work with the $L^2$-adjoint $K^\ast$, rather than $K$. Theorem~\ref{thm:main2} also implies a limiting absorption principle for $K^\ast \colon H^{-1/2}(\Gamma) \to H^{-1/2}(\Gamma)$, see Section~\ref{sec:remarks}. Numerical illustrations of this principle can be found in \cite[Figures~7 and 8]{HR19}.

Theorem~\ref{thm:main2} can be used to study the spectral theory of $K \colon H^{1/2}(\Gamma) \to H^{1/2}(\Gamma)$. In \cite{HLY17}, the spectral resolution of $K \colon H^{1/2}(\Gamma) \to H^{1/2}(\Gamma)$ is given explicitly in the case that $\Gamma$ is a lens (which has two corners of the same angle). To the author's knowledge, this is the only example where the finer spectral details have been known previously. A spectral theory problem which is similar in spirit to the plasmonic problem and which can be diagonalized explicitly can also be extracted from the second model problem described in \cite{BCC13}. Formulas for spectral measures have also been provided in certain periodic settings \cite{CO01, Milt01}.

Perhaps most notable is Carleman's 1916 thesis \cite{Carl16}, where the resolvent of $K$ is investigated in great detail. Carleman's study was significantly ahead of its time, as the modern tools of functional analysis, function spaces, and spectral theory were not available. At the very end of his thesis, he nonetheless writes down a resolution of $K^\ast$. Using Theorem~\ref{thm:main2} we can put Carleman's spectral resolution on formal footing.

Let $K = \int_{-1}^1 \lambda \, dE(\lambda)$ be the spectral decomposition of $K \colon H^{1/2}(\Gamma) \to H^{1/2}(\Gamma)$. For $g,h \in H^{1/2}(\Gamma)$, we denote the associated spectral measure by $\nu_{g,h}$,
$$\nu_{g,h}(\Lambda) = \langle E(\Lambda) g, h \rangle_{\mathcal{E}'}, \quad \Lambda \subset [-1,1] \textrm{ Borel}.$$

\begin{theorem} \label{thm:main3}
	In addition to the discrete set of eigenvalues $\{\lambda_n'\} = \{\lambda_n\} \cup \{1\}$ described in Theorem~\ref{thm:main1}, the spectrum of $K \colon H^{1/2}(\Gamma) \to H^{1/2}(\Gamma)$ consists of an absolutely continuous part of multiplicity 1,
	$$\sigma_{\ac}(K) = [-|1-\alpha/\pi|, |1-\alpha/\pi|].$$
	In particular, there is no singular continuous spectrum.
	
	Furthermore, if $g,h \in H^1(\Gamma)$, then the spectral measure 
	$$\nu_{g,h} = \tau \, d\lambda + \sum_{n} \tau_n \delta_{\lambda_n'}$$
	has a continuous density $\tau$ which is real analytic in $(-|1-\alpha/\pi|, 0)$ and $(0, |1-\alpha/\pi|)$.
\end{theorem}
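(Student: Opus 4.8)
The strategy is to derive Theorem~\ref{thm:main3} from the limiting absorption principle of Theorem~\ref{thm:main2} together with the general theory relating boundary values of the resolvent to the absolutely continuous part of the spectral measure (the Stieltjes--Privalov / de~Branges approach). The starting point is the formula, valid for $g, h \in H^1(\Gamma)$ and $u$ in the open interval,
\[
\frac{d\nu_{g,h}}{d\lambda}(u) = \frac{1}{2\pi i}\lim_{\varepsilon \to 0^+} \Bigl( \langle (K-(u+i\varepsilon))^{-1} g, h\rangle_{\mathcal{E}'} - \langle (K-(u-i\varepsilon))^{-1} g, h\rangle_{\mathcal{E}'} \Bigr),
\]
which holds at every point $u$ where the boundary values exist and the measure has no atom. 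By Theorem~\ref{thm:main2}, for $u \in (0,1-\alpha/\pi)$ not an eigenvalue, the upper limit $h_u^+ = \lim_{\varepsilon \to 0^+}(K-(u+i\varepsilon))^{-1}g$ exists in $L^2(\Gamma)$, and the remark following the theorem gives the analogous statement from below with the conjugate singular function $\tilde q_u(x) = \phi(s)s^{\mu_\alpha(u)}$. Since $\mre\mu_\alpha(u) = 0$ on this interval, $q_u$ and $\tilde q_u = \overline{q_u}$ are genuinely in $L^2(\Gamma)$ but \emph{not} in $H^{1/2}(\Gamma)$, which is exactly the mechanism producing a nonzero jump: the regular parts $\tilde h_u^{\pm}$ lie in $H^1(\Gamma)$ and, one expects, have equal pairing contributions, so the jump is governed by $(c_u^+ q_u + c_u^- \tilde q_u)$ paired against $h$. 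One must show this pairing makes sense: although $q_u \notin H^{1/2}$, the pairing $\langle q_u, h\rangle_{\mathcal{E}'}$ is to be interpreted via the $L^2$ duality against $h \in H^1(\Gamma)$ after the $\mathcal{E}'$-norm is unwound through $S^{-1}$, and the symmetry $\mu_\alpha(\bar\lambda) = -\overline{\mu_\alpha(\lambda)}$ ties $c_u^-$ to $\overline{c_u^+}$ up to a computable factor.

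\emph{Absolute continuity and multiplicity.} With the density $\tau(u) = d\nu_{g,g}/d\lambda$ identified in terms of $|c_u|^2$ times an explicit positive function of $u$ (the $\mathcal{E}'$-"norm" of the singular function, suitably renormalised), real analyticity of $\tau$ on $(0,1-\alpha/\pi)$ follows from the meromorphy of $\lambda \mapsto H_\lambda$ asserted in Theorem~\ref{thm:main2}, since away from the poles $\{\lambda_n\}$ the coefficient $c_\lambda$ is a real-analytic (indeed holomorphic) function of $\lambda$ restricted to the interval, and $\mu_\alpha$ is biholomorphic hence real analytic there. The poles of $H_\lambda$ at embedded eigenvalues are precisely the atoms $\tau_n\delta_{\lambda_n'}$; that these do not also contribute a singular continuous piece is the content of the phrase "simple poles and no other poles" — a simple pole in the resolvent, sandwiched by the Stieltjes inversion, produces only a point mass, so $\nu_{g,h}$ restricted to $(0,1-\alpha/\pi)$ is $\tau\,d\lambda$ plus finitely many (on any compact subinterval) point masses, with no residual singular continuous part. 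The endpoints $u = 0$ and $u = \pm|1-\alpha/\pi|$ are handled separately: they are a single point and the endpoints of the band, of measure zero, and the possible accumulation of $\{\lambda_n\}$ there (allowed by Theorem~\ref{thm:main1}) does not affect absolute continuity of the continuous part since a countable set is $\nu_{\mathrm{ac}}$-null. To get multiplicity exactly $1$, I would observe that the limiting solution $h_u$ is determined by the single scalar $c_u$ modulo $H^1(\Gamma)$ (which carries no a.c. spectrum, being compactly embedded in the relevant sense), so the a.c. part of $K$ is unitarily equivalent to multiplication by $u$ on $L^2(-|1-\alpha/\pi|,|1-\alpha/\pi|; \mathbb{C})$ — a one-dimensional fibre. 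The symmetry $f \mapsto -f$ relating $K$ and $-K$, combined with $\mu_\alpha(\bar\lambda) = -\overline{\mu_\alpha(\lambda)}$, transports everything from $(0,1-\alpha/\pi)$ to $(-1+\alpha/\pi,0)$, and the case $\alpha > \pi$ follows from the stated near-identical behaviour of $\pm K$.

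\emph{The main obstacle.} The delicate point is making rigorous the passage from the $L^2$-convergence in Theorem~\ref{thm:main2} to a statement about the $\mathcal{E}'$-spectral measure, because $\mathcal{E}' = H^{1/2}(\Gamma)$ with its special norm is a \emph{strictly smaller} space than $L^2(\Gamma)$, and the limiting solutions $h_u$ do \emph{not} belong to it. One must therefore argue at the level of the sesquilinear form: for $g, h \in H^1(\Gamma) \subset H^{1/2}(\Gamma)$ one has $\langle(K-(u+i\varepsilon))^{-1}g,h\rangle_{\mathcal{E}'}$ well-defined for $\varepsilon > 0$, and the claim is that this scalar converges as $\varepsilon \to 0^+$ with a limit computable from $H_u$. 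I would establish this by writing $\langle \cdot,\cdot\rangle_{\mathcal{E}'} = \langle S^{-1}\cdot,\cdot\rangle_{L^2}$ and showing that $S^{-1}$ maps the decomposition $c_\lambda q_\lambda + \tilde f_\lambda$ into something whose $L^2$-pairing against $h \in H^1$ is continuous up to the boundary — this requires understanding the mapping properties of $S^{-1}$ (equivalently $S$) on the singular functions $q_\lambda$ near the corner, likely using explicit local computations with Mellin transforms in the corner variable $s$, which is presumably the technical heart already developed in the earlier sections establishing Theorem~\ref{thm:main2}. A secondary difficulty is verifying that the two regular parts $\tilde h_u^{+}$ and $\tilde h_u^{-}$ contribute identically to the jump (so that only the singular coefficients survive), which should follow from a symmetry/reflection argument using $\mu_\alpha(\bar\lambda) = -\overline{\mu_\alpha(\lambda)}$ but must be checked with care against the orientation of the cut $\gamma_{1,\alpha}$.
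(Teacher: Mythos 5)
Your overall route (limiting absorption from Theorem~\ref{thm:main2} plus Stieltjes/Fatou inversion of the resolvent) is the same as the paper's, and you correctly flag the main technical point: giving meaning to $\lim_{\varepsilon\to0^+}\langle (K-(u+i\varepsilon))^{-1}g,h\rangle_{\mathcal{E}'}$ when the limit function leaves $H^{1/2}(\Gamma)$. But your proposed resolution is both undeveloped and harder than necessary. Since $h\in H^1(\Gamma)$ and $S\colon L^2(\Gamma)\to H^1(\Gamma)$ is an isomorphism, one simply writes $\langle (K-\lambda)^{-1}g,h\rangle_{\mathcal{E}'}=\langle (K-\lambda)^{-1}g,\,S^{-1}h\rangle_{L^2(\Gamma)}$ with $S^{-1}h\in L^2(\Gamma)$; the $L^2$-convergence of $H_{u+i\varepsilon}g$ then immediately gives convergence of the form. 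No mapping properties of $S^{-1}$ on the singular functions $q_\lambda$ and no further Mellin analysis are needed (the paper relegates that analysis to a remark in its final section and never uses it for this theorem). Relatedly, your claim that the regular parts from above and below contribute identically to the jump, and the resulting formula $\tau(u)=|c_u|^2$ times a $g$-independent positive factor, are asserted but not proved and are not needed: the paper obtains the density from the boundary value from above alone, via Fatou's theorem after subtracting the simple poles, which also gives the real analyticity. Finally, to exclude singular continuous spectrum you must pass from $H^1$ data to arbitrary $f\in H^{1/2}(\Gamma)$: the paper approximates $f$ by $g\in H^1(\Gamma)$ in the $\mathcal{E}'$-norm and applies de la Vall\'ee-Poussin on a closed Lebesgue-null carrier; your ``simple pole $\Rightarrow$ point mass'' remark only concerns $H^1$ vectors and the behaviour at the poles, though this part is patchable by the same density argument.

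The genuine gap is the multiplicity statement. ``$h_u$ is determined by the single scalar $c_u$ modulo $H^1(\Gamma)$, which carries no a.c.\ spectrum being compactly embedded'' is not an argument: $H^1(\Gamma)$ is not $K$-invariant, so it cannot ``carry'' spectrum, and the one-parameter structure of the boundary solutions modulo $H^1(\Gamma)$ does not by itself bound the multiplicity of the absolutely continuous part. What is needed, and what the paper proves, is first that $\ker(K-u)|_{L^2(\Gamma)}$ is exactly one-dimensional for non-eigenvalue $u$ in the open band -- this is not automatic, since for such $u$ both singular behaviours $s^{\pm\mu_\alpha(u)}$ are in $L^2$ near the corner, and the paper needs a Fredholm/duality argument (invertibility of $T_u$, passage to $K^\ast-u$ on $H^{-1}(\Gamma)$, and the Plemelj formula $KS=SK^\ast$) to pin the dimension down to one; this makes the resolvent jump $S_u=(2\pi i)^{-1}(T_u^{-1}-\widetilde T_u^{-1})$ a rank-one, positive-on-the-diagonal operator. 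Second, one must convert this almost-everywhere rank-one jump into a unitary equivalence of the a.c.\ part with multiplication by $\lambda$ on $L^2(I)$; the paper does this in its Section~\ref{sec:spectralres} by building the generalized eigenfunction family $e(\lambda)$, comparing with the direct-integral form of the spectral theorem, and using the real analyticity of the densities to show the scalar spectral measure is mutually absolutely continuous with Lebesgue measure on $I$. Without these two steps the multiplicity-one claim in your proposal is unsupported.
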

In fact more will be proven in Theorem~\ref{thm:spectrum2}; a semi-explicit spectral resolution of $K \colon H^{1/2}(\Gamma) \to H^{1/2}(\Gamma)$ in terms of eigenfunctions of $K \colon L^2(\Gamma) \to L^2(\Gamma)$ will be provided. Our analysis also extends to piecewise $C^3$-domains with multiple corners of various angles, but discussion will be restricted to the case of a single corner, primarily for notational reasons.

To finish the introduction we apply Theorem~\ref{thm:main3} to the polarizability problem.
\begin{example}[Polarizability] \label{ex:polar} \rm
Viewing $\Gamma_{\inte}$ as an inclusion in the infinite space $\mathbb{R}^2$, let $\epsilon_r \in \mathbb{C}$, $\epsilon_r \notin \{0,1\}$, denote its relative permittivity. To find the polarizability tensor of $\Gamma_{\inte}$ we solve, for a field $e \in \mathbb{R}^2$, the problem
\begin{equation*}
\begin{cases}
\int_{\mathbb{R}^2} |\nabla U - e|^2 \, dx < \infty, \\
\Delta U(x) = 0, \quad &x \in \Gamma_{\inte} \cup \Gamma_{\exte}, \\
\Tr_{\inte} U(x) = \Tr_{\exte} U(x), \quad &x \in \Gamma, \\
\partial_{n}^{\exte} U(x) = \epsilon_r \partial_{n}^{\inte} U(x), \quad &x \in \Gamma.
\end{cases}
\end{equation*}
This problem has a solution if and only if there is a solution of the form
\begin{equation*}
U(x) = \begin{cases} \epsilon_r^{-1}(\langle e, x \rangle + D\rho(x)), \quad &x \in \Gamma_{\inte}, \\ \langle e, x \rangle + D\rho(x), \quad &x \in \Gamma_{\exte},\end{cases}
\end{equation*}
where $D\rho$ is the double layer potential of some $\rho \in H^{1/2}(\Gamma)$. See for example \cite{PP14}. The condition $\Tr_{\inte} U = \Tr_{\exte} U$ yields, by the jump formulas 
$$\Tr_{\inte} Df = (K+I)f, \quad \Tr_{\exte} Df = (K-I)f,$$
that
$$a + (K+I)\rho = \epsilon_r(a + (K-I)\rho),$$
where $a(x) = \langle e, x \rangle$, $x \in \Gamma$. That is,
$$(K + z) \rho = -a, \quad z = -\frac{\epsilon_r + 1}{\epsilon_r - 1}.$$
The area-scaled polarizability tensor $\omega(z) \colon \mathbb{R}^2 \to \mathbb{R}^2$ is given by
$$\omega(z)e = \frac{\epsilon_r - 1}{|\Gamma_{\inte}|} \int_{\Gamma_{\inte}} \nabla U(x) \, dx.$$
It is a linear map determined, for $j,k=1,2$, by 
$$\omega_{jk}(z) := \langle \omega(z)e_j, e_k \rangle = \frac{2}{|\Gamma_{\inte}|} \int_{\Gamma} \rho_j b_k \, d\sigma =  -\frac{2}{|\Gamma_{\inte}|} \langle (K+z)^{-1} a_j, Sb_k \rangle_{\mathcal{E}'},$$
where $e_1,e_2$ is the standard basis of $\mathbb{R}^2$, and $a_j(x) = \langle e_j, x \rangle$, $b_j(x) = \langle e_j, n_x \rangle$, and $\rho_j = -(K+z)^{-1}a_j$, $j=1,2$. The polarizability tensor $\omega(z)$ is thus clearly well-defined when $-z \notin \sigma(K, H^{1/2}(\Gamma))$, and in particular when $\mim z > 0$. For $z \in \mathbb{R}$ we define the limit polarizabilities as
$$\omega_{jk}^+(z) = \lim_{\varepsilon \to 0^+} \omega_{jk}(z + i\varepsilon), \quad j,k = 1,2,$$
whenever this is well-defined. 

Let $I = (-|1-\alpha/\pi|, |1-\alpha/\pi|)$. Since $a_j, Sb_k \in H^1(\Gamma)$, there is by Theorem~\ref{thm:main3} a density $\tau_{jk} \in L^1(I)$ which is real analytic on $I\setminus \{0\}$, a sequence of eigenvalues $\{\lambda_n'\}$ that can only accumulate at $0$ and the endpoints of $I$, and a sequence $(\tau_{jk}(n)) \in \ell^1$ such that
$$-\omega_{jk}(z) = \int_I \frac{\tau_{jk}(u)}{u + z} \, du + \sum_n \frac{\tau_{jk}(n)}{\lambda_n' + z}, \quad \mim z > 0.$$
By \cite[Theorem~5.2]{HP13}, $\tau_{jj}(u) \geq 0$ and $\tau_{jj}(n) \geq 0$, for $j=1,2$, $u \in I$, and all $n$. 

Furthermore, the limit polarizability $\omega_{jk}^{+}(z)$ is real analytic for $z \in \mathbb{R}$, except possibly when $-z \in \{\lambda_n'\}\cup\{0, \pm |1-\alpha/\pi|\}.$ Figure~\ref{fig:limpol} gives a numerical illustration of the limit polarizability $\omega_{11}^+$ in the case when
\begin{equation} \label{eq:droplet}
\Gamma = \left\{ \sin(\pi s)\left(\cos\left(\frac{2\pi}{7}(s-1/2)\right), \sin\left(\frac{2\pi}{7}(s-1/2)\right)\right) \, : \, s \in [0,1]\right\},
\end{equation}
a droplet-shape with a corner of angle $\alpha = 2\pi/7$. Note also that in this case $\omega_{22}^+(z) = -\overline{\omega^+_{11}(-z)}$ and $\omega_{12}^+ = \omega_{21}^+ = 0$. Figure~\ref{fig:square} illustrates the limit polarizability for a unit square $\Gamma$, and suggests that in general, one can not expect the spectral measures of $K \colon H^{1/2}(\Gamma) \to H^{1/2}(\Gamma)$ to have smooth densities at $\lambda = 0$. The figures were produced by running the programs \texttt{demo17.m} and \texttt{demo17b.m}, available as a part of Johan Helsing's tutorial on the RCIP method \cite{HelTut}.
\begin{figure}
	\centering
	\includegraphics[width =0.80\linewidth]{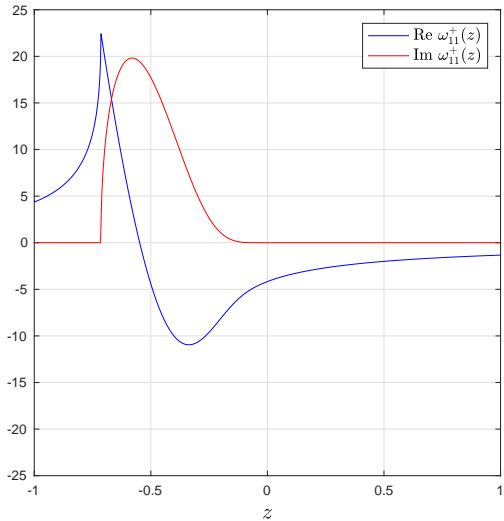}
	\caption{The limit polarizability $\omega_{11}^+(z)$, $-1 \leq z \leq 1$, when $\Gamma$ is as in \eqref{eq:droplet}, a droplet-shape with a corner of angle $\alpha = 2\pi/7$.}
	\label{fig:limpol}
\end{figure}

\begin{figure}
	\centering
	\includegraphics[width =0.80\linewidth]{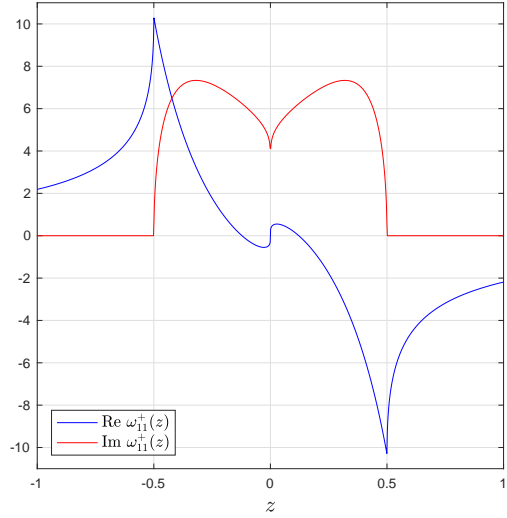}
	\caption{The limit polarizability $\omega_{11}^+(z)$, $-1 \leq z \leq 1$, when $\Gamma$ is a unit square.}
	\label{fig:square}
\end{figure}

\end{example}

\subsection*{Acknowledgements} The author is very grateful to Anne-Sophie Bonnet-Ben Dhia, Xavier Claeys, Johan Helsing, and Alexander Pushnitski for helpful discussions and suggestions. This research was supported by EPSRC Research Grant EP/S029486/1.
}
 \setcounter{theorem}{0}

%\section{Two characterizations of the Sobolev space $H^{1/2}(\Gamma)$}
\section{Layout and proof strategy}
Section~\ref{sec:prelim} contains a taxonomy of Sobolev spaces on $\mathbb{R}_+$ and their relationship with the Mellin transform.  This is needed in order to describe the Sobolev spaces $H^\eta(\Gamma)$, $0 \leq \eta \leq 1$. Particular attention is paid to the matching condition at the corner between $\Gamma_-$ and $\Gamma_+$ (Figure~\ref{fig:Gamma}), which is required for a function on $\Gamma$ to belong to $H^\eta(\Gamma)$. The condition is somewhat subtle for the critical index $\eta = 1/2$. We also a give a characterization of $H^{1/2}(\Gamma)$ in terms of single layer potentials, leading to the special $\mathcal{E}'$-norm in which $K$ is self-adjoint. Finally, we outline the basic spectral theory of $K \colon \mathcal{E}' \to \mathcal{E}'$.

In Section~\ref{sec:mellin} we first explain how to expand $K$ into a sum of Mellin operators in the vicinity of a curvilinear corner, referring to calculations made by Costabel \cite{C83} and Costabel and Stephan \cite{CS83}.  Very roughly speaking, we have the following situation: with respect to the decomposition $L^2(\Gamma) = L^2(\Gamma_-) \oplus L^2(\Gamma_+)  \oplus L^2(\Gamma_c)$, see Figure~\ref{fig:Gamma}, for suitable parametrizations of $\Gamma_{\pm}$, we that that
$$K = \begin{pmatrix}
0 & \phi K_\alpha & 0\\
\phi K_\alpha & 0 & 0 \\
0 & 0 & 0
\end{pmatrix} + \textrm{higher order terms.}$$
Here $\phi$ is a cut-off function, and $K_\alpha$ is a Mellin convolution operator whose kernel has Mellin transform
$$\widehat{K}_\alpha(z) = \frac{\sin((\pi - \alpha)z)}{\sin(\pi  z)}, \quad -1 < \mre z < 1.$$
Suppose that $(K_\alpha-\lambda) f = g$, for a function $g \in C_c^\infty([0, \infty))$ and a function $f \colon \mathbb{R}_+ \to \mathbb{C}$ which in a suitable sense has a Mellin transform on the line $\mre z = 0$. Suppose also that $\widehat{K}_\alpha(\mu) = \lambda$ for some unique $\mu$ in the strip $-1/2 < \mre \mu < \varepsilon$, $\varepsilon > 0$. From the equation $(K_\alpha-\lambda) f = g$ we deduce that the Mellin transform $\hat{f}$ of $f$ has a meromorphic extension to $-1/2 < \mre z < \varepsilon$ with poles at $0$ and $\mu$. This corresponds to the fact that $f(s) = c\phi(s)s^{-\mu} + \tilde{f}(s),$ where $c \in \mathbb{C}$ and $\tilde{f}$ is of higher regularity than $f$.

To formalize this argument we first study the transcendental equation $\widehat{K}_\alpha(z) = \lambda$. By conformal mapping theory, we show that $\widehat{K}_\alpha$ is univalent on certain strips and half-strips. In fact, $\mu_{\alpha}$, as described in \eqref{eq:intromu} and of central importance to Theorem~\ref{thm:main2}, is the inverse of a restriction of $\widehat{K}_\alpha$. Secondly, we have to perform the Mellin analysis of $K_\alpha$ very carefully, as we are interested in functions $f$ belonging precisely to the Sobolev space $H^{1/2}(\mathbb{R}_+)$ of critical index.

Assume for simplicity that $0 < \alpha < \pi$. In Section~\ref{sec:NPGamma} we apply these results to show that if $\lambda \in \widetilde{\Sigma}_{1, \alpha}$ with $\mim \lambda > 0$ and $(K - \lambda)f_\lambda = g$ for $f_\lambda \in H^{1/2}(\Gamma)$ and $g \in H^1(\Gamma)$, then $f_\lambda$ is of the form 
$$f_\lambda = c_\lambda q_{\lambda} + \tilde{f}_\lambda.$$
Here $q_{\lambda}$ is the singular function described before Theorem~\ref{thm:main2} and $\tilde{f}_\lambda \in H^1(\Gamma)$. Hence the operator $H_\lambda$ of Theorem~\ref{thm:main2} can be understood as the restriction to $H^1(\Gamma)$ of the resolvent $(K-\lambda)^{-1}$, for $\lambda \in \widetilde{\Sigma}_{1, \alpha}$, $\mim \lambda > 0$. We will then show that $\lambda \mapsto H_\lambda^{-1}$ is an analytic family of Fredholm operators for $\lambda \in \widetilde{\Sigma}_{1, \alpha} \setminus \gamma_{1, \alpha}$. Thus, by the analytic Fredholm theorem, $H_\lambda$ has a meromorphic extension to all of $\widetilde{\Sigma}_{1, \alpha} \setminus \gamma_{1, \alpha}$. Combined with the spectral theorem for $K \colon H^{1/2}(\Gamma) \to H^{1/2}(\Gamma)$, we deduce Theorems~\ref{thm:main1}, \ref{thm:main2}, and \ref{thm:main3}, except for the statement about the multiplicity of the a.c. spectrum. 

We could equally well have started by defining the operators $\widetilde{H}_\lambda$ via restriction of the resolvent to $H^1(\Gamma)$ for $\mim \lambda < 0$ (rather than $\mim \lambda > 0$). We then find that $\lambda \mapsto \widetilde{H}_\lambda$ has a meromorphic extension through the essential spectrum to $\mim \lambda \geq 0$. 

In Section~\ref{sec:spectralres} we then construct the spectral resolution of $K \colon H^{1/2}(\Gamma) \to H^{1/2}(\Gamma)$ through the formal relation
$$\frac{d}{du}E((-\infty, u)) = (2\pi i)^{-1}(H_u - \widetilde{H}_u),$$
where $E$ is the projection-valued spectral measure of $K$. This will in particular show that the absolutely continuous spectrum has multiplicity $1$.

In Section~\ref{sec:remarks} we tie up some loose ends, and briefly discuss a number of relevant open problems.
\section{Preliminaries} \label{sec:prelim}
\subsection{A description of functions on $\Gamma$} \label{subsec:desc}
Let $\Gamma_+$ and $\Gamma_-$ be two Jordan arcs with parametrizations $s \mapsto z_\pm(s)$, $s \in [0,1]$, intersecting only at $z_+(0) = z_-(0) = 0$, where the arcs make an angle $\alpha$, $0 < \alpha < 2\pi$, $\alpha \neq \pi$. We assume that $\Gamma_{\pm}$ are parametrized by arc length from the corner for, say, $s \in [0, 1/2).$  We also suppose that $\Gamma_+$ and $\Gamma_-$ are joined together by a Jordan arc $\Gamma_c$ which makes 
$$\Gamma = \Gamma_- \cup \Gamma_+ \cup \Gamma_c$$
 into a piecewise $C^3$ Jordan curve, smooth everywhere except for at the origin. See Figure~\ref{fig:Gamma}. By rotation and reflection, we may thus assume that
$$z_-(s) = e^{i\alpha}s + \frac{i}{2}\kappa_-e^{i\alpha}s^2 + O(s^3), \quad  z_+(s) = s + \frac{i}{2}\kappa_+ s^2 + O(s^3).$$
For the rest of the article, fix a positive $C^\infty(\mathbb{R}^2)$-function $\chi$ such that $\chi \equiv 1$ in a neighborhood of $0$ and such that 
$$\supp \chi \cap \Gamma \subset \bigcup_{\pm} \{z_\pm(s) \, : \, s \in [0, 1/2)\}.$$
Let $0 < s^\ast_\pm < 1/2$ be the largest numbers for which $\chi(z_{\pm}(s)) =1$ for all $s \in [0, s^\ast_\pm]$. Choose also a $C^3$ parametrization $s \mapsto z_c(s)$, $s \in [0,1]$, of the smooth part $\Gamma \setminus \bigcup_{\pm} \{z_\pm(s) \, : \, s \in [0, s^\ast_\pm/2)\}$ of $\Gamma$. 

Then any function $f$ on $\Gamma$ may be decomposed as $f = \chi f + (1-\chi) f$, where $\chi f$ is naturally associated with the pairs of functions $(f_+, f_-)$, 
$$f_\pm(s) = \chi(z_\pm(s)) f(z_\pm(s)), \quad s \in \mathbb{R}_+.$$
Similarly, we associate $(1-\chi)f$ with the function
$$f_c(s) = (1-\chi(z_c(s))) f(z_c(s)), \quad s \in \mathbb{R}_+.$$

\subsection{Mellin transforms} \label{subsec:mellin}
For appropriate functions $f \colon \mathbb{R}_+ \to \mathbb{C}$ and $z \in \mathbb{C}$, we denote the Mellin transform of $f$ by $\hat{f}(z)$,
$$\hat{f}(z) = \mathcal{M}f(z) = \int_0^\infty s^z f(s) \, \frac{ds}{s}.$$
We now list some standard lemmas that are easy to prove.
\begin{lemma} \label{lem:melliniso}
	Suppose that $F$ is analytic and rapidly decreasing in the strip $\beta < \mre z < \gamma$, that is, for every $\varepsilon > 0$ and $k, m \in \mathbb{N}_0$ we have that
	$$\left(\frac{d}{dz}\right)^k F(z) = O((1+|\mim z|)^{-m}), \quad \beta + \varepsilon < \mre z < \gamma - \varepsilon.$$
	Then $F = \hat{f}$ is the Mellin transform of a unique function $f \in C^\infty(\mathbb{R}_+)$ such that, for every $k$ and $\varepsilon > 0$,
	$$\left(t \frac{d}{dt}\right)^k f(t) = O(t^{-\beta - \varepsilon}), \quad t \to 0^+,$$
	and
	$$\left(t \frac{d}{dt}\right)^k f(t) = O(t^{-\gamma + \varepsilon}), \quad t \to \infty.$$
	More precisely, 
	$$f(t) = (\mathcal{M}^{-1} F)(t) = \frac{1}{2\pi i}\int_{\mre z = \rho} t^{-z} F(z) \, dz,$$
	for any $\beta < \rho < \gamma$.
	Conversely, if $f \in C^\infty(0,\infty)$ satisfies these conditions, then $\hat{f}$ exists and is rapidly decreasing in the strip $\beta < \mre z < \gamma$. 
\end{lemma}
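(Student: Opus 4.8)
The plan is to reduce the statement to the classical Paley--Wiener theorem for the Fourier transform via the exponential substitution $s = e^{-t}$. Setting $g(t) = f(e^{-t})$, one computes $\hat f(\rho + i\xi) = \int_{-\infty}^\infty e^{-(\rho+i\xi)t} g(t)\,dt$, so that for each fixed $\rho \in (\beta,\gamma)$ the function $\xi \mapsto \hat f(\rho + i\xi)$ is, up to a fixed normalization, the Fourier transform of $t \mapsto e^{-\rho t} g(t)$. Under this dictionary, the hypothesis that $F$ is analytic and rapidly decreasing in the strip $\beta < \mre z < \gamma$, locally uniformly in $\mre z$, translates precisely into the statement that $F$ is the Fourier--Laplace transform of a smooth $g$ for which $e^{-\rho t} g^{(k)}(t)$ decays faster than any polynomial as $|t| \to \infty$, uniformly for $\rho$ in compact subintervals of $(\beta,\gamma)$; and the latter is equivalent to $g$ and each of its derivatives decaying like $e^{-\gamma' t}$ as $t \to +\infty$ for every $\gamma' < \gamma$ and like $e^{-\beta' t}$ as $t \to -\infty$ for every $\beta' > \beta$. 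Translating back through $t = -\log s$, this is exactly the claimed behaviour of $f = \mathcal{M}^{-1} F$ near $0$ and near $\infty$, expressed with the operator $t\,d/dt$ (which corresponds to $-d/dt$ in the $t$-variable), and the inversion formula $f(t) = \frac{1}{2\pi i}\int_{\mre z = \rho} t^{-z} F(z)\,dz$ is just Fourier inversion rewritten in the $s$-variable.

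Concretely, I would argue as follows. Define $f$ by that contour integral along a line $\mre z = \rho$ with $\beta < \rho < \gamma$. First, Cauchy's theorem together with the decay of $F$ as $|\mim z| \to \infty$ shows that $f$ is independent of the choice of $\rho$. Next, the same rapid decay lets one differentiate under the integral sign, so $f \in C^\infty(\mathbb{R}_+)$; and a single integration by parts yields the intertwining identity $\mathcal{M}[(t\,d/dt)f](z) = -z\,\hat f(z)$, whence $(t\,d/dt)^k f$ has Mellin transform $(-z)^k F(z)$, which is again analytic and rapidly decreasing in $\beta < \mre z < \gamma$. To obtain the decay of $(t\,d/dt)^k f(t)$ as $t \to 0^+$, I would shift the contour in its inversion integral to $\mre z = \beta + \varepsilon$: the shifted integral converges absolutely and $|t^{-z}| = t^{-\beta-\varepsilon}$ on that line, giving the bound $O(t^{-\beta-\varepsilon})$; shifting instead to $\mre z = \gamma - \varepsilon$ gives $O(t^{-\gamma+\varepsilon})$ as $t \to \infty$. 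Uniqueness of $f$ is immediate from injectivity of the Mellin transform, i.e. of the Fourier transform on the line $\mre z = \rho$.

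For the converse direction, given $f \in C^\infty(0,\infty)$ satisfying the two decay properties, splitting $\int_0^\infty = \int_0^1 + \int_1^\infty$ shows that $\hat f(z) = \int_0^\infty s^z f(s)\,\frac{ds}{s}$ converges absolutely and locally uniformly for $\beta < \mre z < \gamma$, hence defines an analytic function there. Rapid decrease then follows by repeated integration by parts: $z^m \hat f(z) = (-1)^m \mathcal{M}[(t\,d/dt)^m f](z)$ is bounded on every substrip $\beta + \varepsilon \le \mre z \le \gamma - \varepsilon$ by the hypothesised bounds on $(t\,d/dt)^m f$, so $\hat f(z) = O(|z|^{-m})$; and since $\frac{d}{dz}\hat f(z) = \mathcal{M}[(\log s) f(s)](z)$ obeys estimates of the same kind (the extra factor $\log s$ only worsening the exponents by an arbitrarily small amount), the $z$-derivatives of $\hat f$ decay rapidly as well. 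I do not anticipate a genuine obstacle here; the one thing needing care — and precisely the reason the hypotheses are phrased with an $\varepsilon$ and arbitrary $k, m$ — is that all the decay and integrability bounds must hold locally uniformly in $\mre z$ across the open strip, which is exactly what is needed to justify the contour shifts, the differentiation under the integral sign, and the integrations by parts.
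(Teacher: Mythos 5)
Your proof is correct, and it is the standard argument; the paper in fact gives no proof of this lemma, listing it among ``standard lemmas that are easy to prove.'' The concrete steps you give --- defining $f$ by the inversion integral, using the rapid decay of $F$ to justify contour shifts and differentiation under the integral sign (so that $(t\,d/dt)^k f$ has integrand $(-z)^k t^{-z}F(z)$, estimated on the lines $\mre z=\beta+\varepsilon$ and $\mre z=\gamma-\varepsilon$), identifying $\hat f=F$ via Fourier inversion on a line, and in the converse direction integrating by parts after checking that the boundary terms $s^{z}(t\,d/dt)^{m-1}f(s)$ vanish at $0$ and $\infty$ under the assumed bounds --- are all sound. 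The only blemish is in your heuristic opening paragraph: under $s=e^{-t}$ the decay rates get attached to the wrong infinities (as $t\to+\infty$, i.e.\ $s\to0^+$, the correct translation is $g(t)=O(e^{\beta' t})$ for every $\beta'>\beta$, and symmetrically $g(t)=O(e^{\gamma' t})$ as $t\to-\infty$ for every $\gamma'<\gamma$); this slip does not propagate into your actual argument, whose contour shifts are matched correctly.
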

We will reserve the letter $\phi$ to always mean a decreasing cut-off function $\phi \in C^\infty([0,\infty))$ around $0$, with support in $[0,1)$. That is, $1-\phi$ is compactly supported in $\mathbb{R}_+ = (0,\infty)$. 
\begin{lemma} \label{lem:mellinsing}
 For $\mu \in \mathbb{C}$, let $f(s) = \phi(s) s^{-\mu}$. Then
	$$\hat{f}(z) = \frac{\psi(z)}{z-\mu}, \quad \mre z > \mre \mu,$$
	where $\psi$ is entire and rapidly decreasing as $|\mim z| \to \infty$, and 
	$$\psi(\mu) = -\int_0^\infty \phi'(s) \, ds > 0.$$
\end{lemma}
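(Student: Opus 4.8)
The plan is to compute the Mellin transform of $f(s) = \phi(s)s^{-\mu}$ directly from the definition and then read off the claimed pole structure and the value $\psi(\mu)$. First I would write, for $\mre z > \mre\mu$,
$$\hat{f}(z) = \int_0^\infty s^z \phi(s) s^{-\mu} \, \frac{ds}{s} = \int_0^\infty s^{z-\mu-1} \phi(s) \, ds,$$
which converges absolutely in that half-plane since $\phi$ is bounded with support in $[0,1)$ and $\mre(z-\mu-1) > -1$ near $s=0$. To extract the pole at $z = \mu$ I would integrate by parts, using $s^{z-\mu-1} = \frac{d}{ds}\bigl(\frac{s^{z-\mu}}{z-\mu}\bigr)$; the boundary terms vanish (at $s=0$ because $\mre(z-\mu) > 0$, at $s=\infty$ because $\phi$ is compactly supported), giving
$$\hat{f}(z) = \frac{-1}{z-\mu}\int_0^\infty s^{z-\mu} \phi'(s)\, ds = \frac{\psi(z)}{z-\mu}, \qquad \psi(z) := -\int_0^\infty s^{z-\mu}\phi'(s)\,ds.$$

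Next I would verify the three asserted properties of $\psi$. Since $1-\phi$ is compactly supported in $(0,\infty)$, the function $\phi'$ is compactly supported in $(0,\infty)$, say in $[a,b] \subset (0,\infty)$; hence $\psi(z) = -\int_a^b s^{z-\mu}\phi'(s)\,ds$ is an integral of an entire function of $z$ against a fixed $L^1$-function on a bounded interval bounded away from $0$ and $\infty$, so $\psi$ is entire (differentiate under the integral sign, or invoke Morera/Fubini). Evaluating at $z=\mu$ gives $\psi(\mu) = -\int_0^\infty \phi'(s)\,ds = \phi(0) - \lim_{s\to\infty}\phi(s) = 1 - 0 = 1 > 0$, since $\phi$ is a cut-off equal to $1$ near $0$ with support in $[0,1)$; in particular this shows the pole at $z=\mu$ is genuinely simple. (Note the stated formula $\psi(\mu) = -\int_0^\infty \phi'(s)\,ds$ is positive precisely because $\phi$ is decreasing, even without normalization $\phi(0)=1$.)

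For the rapid decay in $|\mim z|$, I would again integrate by parts, now repeatedly in the variable $s$, exploiting that $\phi' \in C_c^\infty(0,\infty)$: each integration by parts in $\int_a^b s^{z-\mu}\phi'(s)\,ds$ produces a factor of order $1/|z-\mu|$ (roughly $1/|\mim z|$ for large $|\mim z|$) together with an integral of the same type against a higher derivative of $\phi'$, which remains bounded uniformly for $\mre z$ in any fixed vertical strip. Iterating $m$ times yields $\psi(z) = O(|\mim z|^{-m})$, locally uniformly in $\mre z$, as $|\mim z|\to\infty$, which is the required rapid decrease. The only mild subtlety — and the closest thing to an obstacle — is bookkeeping the $\mre z$-dependence of the constants in these estimates to confirm the decay is locally uniform in the real part; but since $\phi'$ is supported in a compact set $[a,b] \Subset (0,\infty)$, all factors $s^{z-\mu}$ stay bounded by $\max(a,b)^{\mre z - \mre\mu}$ times a constant, so this is routine.
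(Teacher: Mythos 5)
Your proof is correct and follows essentially the same route as the paper: integrate by parts once to obtain $\hat{f}(z) = \frac{\psi(z)}{z-\mu}$ with $\psi(z) = -\int_0^1 s^{z-\mu}\phi'(s)\,ds$, observe that $\psi$ is entire since $\phi'$ is smooth and supported in a compact subset of $(0,\infty)$, and obtain rapid decay by further integrations by parts. The extra bookkeeping you provide (vanishing boundary terms, evaluation $\psi(\mu)=\phi(0)>0$, locally uniform decay in vertical strips) is just a fleshed-out version of the paper's brief argument.
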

\begin{proof}
	For $\mre z > \mre \mu$, integration by parts yields that
	$$\hat{f}(z) = -\frac{1}{z-\mu} \int_0^1 s^{z-\mu} \phi'(s) \, ds.$$
	As a function of $z$, the integral is clearly entire and, by further integration by parts, rapidly decreasing.
\end{proof}
\begin{lemma} \label{lem:mellineval}
	Let $1/2 < \eta \leq 1$ and let $f \in C_c^\infty(\mathbb{R}_+)$. Then, for any $\mu \in \mathbb{C}$ with $1/2 - \eta <  \mre \mu \leq 0$, 
	$$|\hat{f}(\mu)|^2 \lesssim \frac{1}{\mre \mu + \eta - 1/2}\int_{\mre z = 1/2 - \eta} (1+|z|^2)^{\eta}|\hat{f}(z)|^2 \, |dz| + \int_{\mre z = 1/2} |\hat{f}(z)|^2 \, |dz|.$$
\end{lemma}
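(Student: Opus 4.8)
The plan is to represent $\hat f(\mu)$ as a Cauchy integral along the two boundary lines of the strip $1/2-\eta < \mre z < 1/2$ and then estimate each boundary term by the Cauchy--Schwarz inequality, using the elementary identity $\int_{\mathbb{R}}(a^2+t^2)^{-1}\,dt = \pi/a$ for $a>0$.

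First I would record that, since $f\in C_c^\infty(\mathbb{R}_+)$, the substitution $s=e^{-u}$ exhibits $t\mapsto \hat f(c+it)$ (for fixed $c$) as the Fourier transform of the $C_c^\infty$-function $u\mapsto e^{-uc}f(e^{-u})$; hence $\hat f$ is entire and rapidly decreasing on every vertical line, uniformly for $\mre z$ in a compact interval (this is the easy half of Lemma~\ref{lem:melliniso}). Since $1/2-\eta < \mre\mu \leq 0 < 1/2$, the point $\mu$ lies in the open strip $1/2-\eta < \mre z < 1/2$. Applying Cauchy's integral formula to the rectangle with vertical sides $\{\mre z = 1/2-\eta\}$ and $\{\mre z = 1/2\}$ and sending the two horizontal sides to $\pm i\infty$ --- which is justified by the rapid decay just noted, since on the horizontal side at height $\pm M$ one has $|z-\mu|\geq M-|\mim\mu|$ while the integrand decays faster than any power of $M$ --- yields
$$\hat f(\mu) = \frac{1}{2\pi i}\int_{\mre z = 1/2} \frac{\hat f(z)}{z-\mu}\,dz \;-\; \frac{1}{2\pi i}\int_{\mre z = 1/2-\eta}\frac{\hat f(z)}{z-\mu}\,dz,$$
both lines oriented in the direction of increasing $\mim z$.

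For the first integral, on $\{\mre z = 1/2\}$ one has $|z-\mu|\geq 1/2-\mre\mu\geq 1/2$, so $\int_{\mre z=1/2}|z-\mu|^{-2}\,|dz| = \pi/(1/2-\mre\mu)\leq 2\pi$; by Cauchy--Schwarz the first term is therefore $\lesssim \big(\int_{\mre z = 1/2}|\hat f(z)|^2\,|dz|\big)^{1/2}$. For the second integral, put $\delta = \mre\mu+\eta-1/2 > 0$, the distance from $\mu$ to $\{\mre z = 1/2-\eta\}$; there $|z-\mu|^2 = \delta^2 + |\mim z - \mim\mu|^2$ and $(1+|z|^2)^{-\eta}\leq 1$, so $\int_{\mre z = 1/2-\eta}(1+|z|^2)^{-\eta}|z-\mu|^{-2}\,|dz| \leq \pi/\delta$, and Cauchy--Schwarz bounds the second term by a constant times $\delta^{-1/2}\big(\int_{\mre z = 1/2-\eta}(1+|z|^2)^{\eta}|\hat f(z)|^2\,|dz|\big)^{1/2}$. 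Squaring and adding the two bounds gives the asserted estimate with an absolute implied constant.

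The argument is essentially routine; the only point requiring care is bookkeeping of where the weight $(1+|z|^2)^\eta$ is needed. Near $\{\mre z = 1/2-\eta\}$ the point $\mu$ can be arbitrarily close, and one must pay the factor $\delta^{-1} = (\mre\mu+\eta-1/2)^{-1}$, but the weight there is harmless since it only helps in the denominator; on $\{\mre z = 1/2\}$ the distance from $\mu$ is bounded below by $1/2$, so a plain unweighted $L^2$ Cauchy--Schwarz suffices, which is exactly what the right-hand side of the lemma asks for. One should also confirm, as indicated above, that the horizontal portions of the rectangle contour contribute nothing in the limit --- immediate from the rapid decay of $\hat f$ together with $|z-\mu|\to\infty$ on the closed strip as $|\mim z|\to\infty$.
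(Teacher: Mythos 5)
Your proof is correct and follows exactly the paper's route: the paper's own proof consists of applying Cauchy--Schwarz to the same two-line Cauchy formula for $\hat f(\mu)$ on the strip $1/2-\eta < \mre z < 1/2$. Your additional details (justifying the contour deformation via rapid decay, and the explicit $\pi/\delta$ bookkeeping) simply flesh out what the paper leaves implicit.
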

\begin{proof}
	Apply Cauchy--Schwarz to the Cauchy formula
	\begin{equation*}
	\hat{f}(\mu) = \frac{1}{2\pi i}\int_{\mre z = 1/2} \frac{\hat{f}(z)}{z -\mu} \, dz -\frac{1}{2\pi i}\int_{\mre z = 1/2 - \eta} \frac{\hat{f}(z)}{z -\mu} \, dz. \qedhere
	\end{equation*}
\end{proof}

\subsection{Sobolev spaces on $\mathbb{R}_+$}
For $0 < \eta \leq 1$, the Sobolev space $H^{\eta}(\mathbb{R}_+)$ is the completion of $C_c^\infty([0,\infty))$ in the norm
$$\|f\|_{H^{\eta}(\mathbb{R}_+)}^2 = \|f\|_{L^2(\mathbb{R}_+)}^2 + |f|_{H^{\eta}(\mathbb{R}_+)}^2,$$
where, for $\eta < 1$,
$$|f|_{H^{\eta}(\mathbb{R}_+)}^2 = \int_{0}^\infty\int_{0}^{\infty} \frac{|f(s) - f(t)|^2}{|s-t|^{1+2\eta}} \, ds \, dt$$
and for $\eta =1$,
$$|f|_{H^{1}(\mathbb{R}_+)} = \|f'\|_{L^2(\mathbb{R}_+)}.$$ 
As usual, we let $H^0(\mathbb{R}_+) = L^2(\mathbb{R}_+)$. Thus $H^{\eta}(\mathbb{R}_+)$ is the restriction of $H^{\eta}(\mathbb{R})$ to $\mathbb{R}_+$.

We also need the weighted Sobolev spaces $W_0^\eta(\mathbb{R}_+)$, $0 \leq \eta \leq 1$, also called Kondratiev spaces, which are the completion of $C_c^\infty(\mathbb{R}_+)$ in the norm
$$\|f\|_{W_0^\eta(\mathbb{R}_+)}^2 = \int_{0}^\infty |f(s)|^2 \, \frac{ds}{s^{2\eta}} + |f|_{H^{\eta}(\mathbb{R}_+)}^2 \approx \int_{\mre z = 1/2 - \eta} (1+|z|^2)^{\eta} |\hat{f}(z)|^2 \, |dz|.$$
We refer to \cite{CS85} for properties of these spaces.

For $\eta > 1/2$, by a Sobolev embedding theorem, the functions $f \in H^{\eta}(\mathbb{R}_+)$ are continuous on $[0,\infty)$. With this in mind we also define, for $1/2 \leq \eta \leq 1$, the space
$$W^\eta(\mathbb{R}_+) = \mathbb{C}\phi \oplus W_0^\eta(\mathbb{R}_+),$$
where, as always, $\phi$ is a cut-off function around $0$.

Note that the classical fractional Hardy inequality \cite{KMP00} actually shows that if $\eta \neq 1/2$, then
$$\int_{0}^\infty |f(s)|^2 \, \frac{ds}{s^{2\eta}} \lesssim |f|_{H^{\eta}(\mathbb{R}_+)}^2, \quad f \in C_c^\infty(\mathbb{R}_+).$$
It follows that for $0 < \eta < 1/2$, $H^\eta(\mathbb{R}_+) = W_0^\eta(\mathbb{R}_+) \cap L^2(\mathbb{R}_+)$. For $\eta > 1/2$, if $f \in H^\eta(\mathbb{R}_+)$, then by the Hardy inequality, $f - f(0)\phi \in W_0^\eta(\mathbb{R}_+) \cap L^2(\mathbb{R}_+)$. Conversely, if $f \in W^\eta(\mathbb{R}_+) \cap L^2(\mathbb{R}_+)$, then $f \in H^\eta(\mathbb{R}_+)$.

To summarize, for $\eta \neq 1/2$ we have the following.
\begin{lemma} \label{lem:sobequiv}
For $0 < \eta < 1/2$, $f \in W_0^\eta(\mathbb{R}_+) \cap L^2(\mathbb{R}_+)$ if and only if $f \in H^\eta(\mathbb{R}_+)$ and
$$\|f\|_{H^\eta(\mathbb{R}_+)}^2 \approx \|f\|_{W_0^\eta(\mathbb{R}_+)}^2 + \|f\|_{L^2(\mathbb{R}_+)}^2.$$
 For $1/2 < \eta \leq 1$, $f \in W^\eta(\mathbb{R}_+) \cap L^2(\mathbb{R}_+)$ if and only if $f \in H^\eta(\mathbb{R}_+)$, and
 $$ \|f\|_{H^\eta(\mathbb{R}_+)}^2 \approx \|f\|_{W^\eta(\mathbb{R}_+)}^2 + \|f\|_{L^2(\mathbb{R}_+)}^2 \approx \|f - f(0)\phi\|_{W^\eta_0(\mathbb{R}_+)}^2 + |f(0)|^2 + \|f\|_{L^2(\mathbb{R}_+)}^2.$$
\end{lemma}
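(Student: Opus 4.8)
The plan is to deduce everything from the fractional Hardy inequality quoted just above the statement, valid for $\eta \neq 1/2$, together with two standard facts about these spaces: that $C_c^\infty(\mathbb{R}_+)$ is dense in $H^\eta(\mathbb{R}_+)$ when $0 < \eta < 1/2$, and that for $1/2 < \eta \leq 1$ its closure $H_0^\eta(\mathbb{R}_+)$ in $H^\eta(\mathbb{R}_+)$ is precisely $\{g \in H^\eta(\mathbb{R}_+) : g(0) = 0\}$ (here $g(0)$ makes sense by the Sobolev embedding $H^\eta(\mathbb{R}_+) \hookrightarrow C([0,\infty))$). Given these, the lemma reduces to comparing the defining norms on a common core and passing to completions, with care taken about the role of the intersection with $L^2(\mathbb{R}_+)$.

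\emph{The case $0 < \eta < 1/2$.} On $C_c^\infty(\mathbb{R}_+)$ the Hardy inequality gives $\int_0^\infty |f(s)|^2 s^{-2\eta}\,ds \lesssim |f|_{H^\eta(\mathbb{R}_+)}^2$, hence $\|f\|_{W_0^\eta(\mathbb{R}_+)}^2 \lesssim |f|_{H^\eta(\mathbb{R}_+)}^2 \leq \|f\|_{H^\eta(\mathbb{R}_+)}^2$, while the reverse bound $\|f\|_{H^\eta(\mathbb{R}_+)}^2 \leq \|f\|_{L^2(\mathbb{R}_+)}^2 + \|f\|_{W_0^\eta(\mathbb{R}_+)}^2$ is immediate from the definitions. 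So the norms $\|\cdot\|_{H^\eta(\mathbb{R}_+)}$ and $(\|\cdot\|_{W_0^\eta(\mathbb{R}_+)}^2 + \|\cdot\|_{L^2(\mathbb{R}_+)}^2)^{1/2}$ are equivalent on $C_c^\infty(\mathbb{R}_+)$. Completing in the first norm gives $H^\eta(\mathbb{R}_+)$ by density; the second norm dominates both $\|\cdot\|_{W_0^\eta(\mathbb{R}_+)}$ and $\|\cdot\|_{L^2(\mathbb{R}_+)}$, so any Cauchy sequence for it converges in both and its completion embeds in $W_0^\eta(\mathbb{R}_+) \cap L^2(\mathbb{R}_+)$. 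Conversely, a routine truncation-and-mollification argument — using $\int_0^\infty |f|^2 s^{-2\eta}\,ds < \infty$ together with $f \in L^2(\mathbb{R}_+)$ to kill both the weighted and the unweighted tails — shows $C_c^\infty(\mathbb{R}_+)$ is dense in $W_0^\eta(\mathbb{R}_+) \cap L^2(\mathbb{R}_+)$ for the combined norm. This gives both the identification and the stated equivalence.

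\emph{The case $1/2 < \eta \leq 1$.} If $f \in H^\eta(\mathbb{R}_+)$, then $g := f - f(0)\phi \in H^\eta(\mathbb{R}_+)$ with $g(0) = 0$, so $g \in H_0^\eta(\mathbb{R}_+)$; extending the Hardy inequality from $C_c^\infty(\mathbb{R}_+)$ to $H_0^\eta(\mathbb{R}_+)$ by density yields $\int_0^\infty |g|^2 s^{-2\eta}\,ds \lesssim |g|_{H^\eta(\mathbb{R}_+)}^2 < \infty$, hence $g \in W_0^\eta(\mathbb{R}_+)$ with $\|g\|_{W_0^\eta(\mathbb{R}_+)}^2 \lesssim \|g\|_{H^\eta(\mathbb{R}_+)}^2$; since $\phi$ has fixed finite $H^\eta(\mathbb{R}_+)$- and $L^2(\mathbb{R}_+)$-norms, $f = f(0)\phi + g \in W^\eta(\mathbb{R}_+) \cap L^2(\mathbb{R}_+)$ and $\|f - f(0)\phi\|_{W_0^\eta(\mathbb{R}_+)}^2 + |f(0)|^2 + \|f\|_{L^2(\mathbb{R}_+)}^2 \lesssim \|f\|_{H^\eta(\mathbb{R}_+)}^2$. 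Conversely, let $f = c\phi + g$ with $g \in W_0^\eta(\mathbb{R}_+)$ and $f \in L^2(\mathbb{R}_+)$; then $g = f - c\phi \in L^2(\mathbb{R}_+)$ and $|g|_{H^\eta(\mathbb{R}_+)} < \infty$, so $g \in H^\eta(\mathbb{R}_+)$ with $\|g\|_{H^\eta(\mathbb{R}_+)}^2 \lesssim \|g\|_{W_0^\eta(\mathbb{R}_+)}^2 + \|g\|_{L^2(\mathbb{R}_+)}^2$, whence $f = c\phi + g \in H^\eta(\mathbb{R}_+)$; evaluating at $0$ forces $c = f(0)$, and tracking constants gives the reverse estimate. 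The middle equivalence in the statement is then just the unpacking of the direct-sum norm on $W^\eta(\mathbb{R}_+) = \mathbb{C}\phi \oplus W_0^\eta(\mathbb{R}_+)$ via the canonical splitting $f = f(0)\phi + (f - f(0)\phi)$.

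The only points that are not pure bookkeeping are the two density facts invoked at the outset — the coincidence of $H^\eta(\mathbb{R}_+)$ with the $C_c^\infty(\mathbb{R}_+)$-closure for $\eta < 1/2$, and the zero-trace description of $H_0^\eta(\mathbb{R}_+)$ for $\eta > 1/2$ — for which I would cite the standard theory (e.g. \cite{CS85} and the reference \cite{KMP00} already used for the Hardy inequality) — and the need never to conflate $W_0^\eta(\mathbb{R}_+)$ with $W_0^\eta(\mathbb{R}_+) \cap L^2(\mathbb{R}_+)$, since $W_0^\eta(\mathbb{R}_+)$ functions need not be square-integrable near infinity. This is precisely why the intersection with $L^2(\mathbb{R}_+)$ appears throughout and why the completion step in the first case must be carried out in the combined norm; I expect that truncation-and-mollification step to be the most delicate routine verification, but it presents no genuine obstacle.
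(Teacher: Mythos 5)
Your argument is correct and follows essentially the same route as the paper, which presents this lemma as a summary of the preceding paragraph: the fractional Hardy inequality of \cite{KMP00} applied on $C_c^\infty(\mathbb{R}_+)$ (extended by the standard density/zero-trace facts), together with the splitting $f = f(0)\phi + (f-f(0)\phi)$ for $1/2 < \eta \leq 1$. Your write-up merely makes explicit the completion and density steps that the paper leaves implicit, so no further changes are needed.
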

The case of chief concern, $\eta = 1/2$, is much more delicate. When restricted to $(0,1)$, the difference between $H^{1/2}(\mathbb{R}_+)$ and $W_0^\eta(\mathbb{R}_+)$ is no longer finite-dimensional in nature, as we just saw was the case for $\eta\neq 1/2$. In terms of the Mellin transform, we have for $f \in C_c^\infty([0, \infty))$, that \cite[Lemma~2.6]{CS85}:
\begin{equation} \label{eq:12norm}
\|f\|_{H^{1/2}(\mathbb{R}_+)}^2 \approx \int_{\mre = 1/2} |\hat{f}(z)|^2 \, |dz| + \int_{\mre z = 0} \frac{|z|^2}{1+|z|} |\hat{f}(z)|^2 \, |dz|.
\end{equation}
Here the Mellin transform on the line $\mre z = 0$ is understood as a limit from $\mre z > 0$. This interpretation is unproblematic, since the singularity at $z = 0$ is removed by the factor $|z|^2$, see Lemmas~\ref{lem:melliniso} and \ref{lem:mellinsing}.

Finally, the following result follows by interpolation.
\begin{lemma} \label{lem:sobcontain}
Let $1/2 < \eta \leq 1$. If $f \in H^{\eta}(\mathbb{R}_+)$, then $f \in W^{1/2}(\mathbb{R}_+)$ and
$$\|f\|_{W^{1/2}(\mathbb{R}_+)} \lesssim \|f\|_{H^\eta(\mathbb{R}_+)}.$$
\end{lemma}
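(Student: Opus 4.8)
The plan is to deduce Lemma~\ref{lem:sobcontain} from Lemma~\ref{lem:sobequiv} together with an interpolation inequality for the Mellin transform. First I would reduce the statement. By Lemma~\ref{lem:sobequiv}, for $1/2 < \eta \le 1$ the correspondence $f \leftrightarrow (f(0),\, f - f(0)\phi)$ identifies $H^\eta(\mathbb{R}_+)$, up to norm equivalence, with $\mathbb{C} \oplus \big(W_0^\eta(\mathbb{R}_+) \cap L^2(\mathbb{R}_+)\big)$; and by definition $W^{1/2}(\mathbb{R}_+) = \mathbb{C}\phi \oplus W_0^{1/2}(\mathbb{R}_+)$ with its natural direct-sum norm. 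Writing $f = f(0)\phi + g$ with $g = f - f(0)\phi$, the scalar component contributes $|f(0)| \lesssim \|f\|_{H^\eta}$ on both sides and may be ignored, so the lemma reduces to the continuous embedding
$$W_0^\eta(\mathbb{R}_+) \cap L^2(\mathbb{R}_+) \hookrightarrow W_0^{1/2}(\mathbb{R}_+), \qquad 1/2 < \eta \le 1,$$
that is, to the estimate $\|g\|_{W_0^{1/2}} \lesssim \|g\|_{W_0^\eta} + \|g\|_{L^2}$.

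For the embedding itself I would argue on the Mellin side. For $g \in C_c^\infty(\mathbb{R}_+)$ one has $\|g\|_{W_0^\eta}^2 \approx \int_{\mre z = 1/2-\eta}(1+|z|^2)^\eta|\hat g(z)|^2\,|dz|$, $\|g\|_{L^2}^2 \approx \int_{\mre z = 1/2}|\hat g(z)|^2\,|dz|$, and $\|g\|_{W_0^{1/2}}^2 \approx \int_{\mre z = 0}(1+|z|^2)^{1/2}|\hat g(z)|^2\,|dz|$. Since $\eta > 1/2$, the line $\mre z = 0$ lies strictly between $\mre z = 1/2-\eta$ and $\mre z = 1/2$; it corresponds to the convex parameter $\theta$ determined by $0 = (1-\theta)(1/2-\eta) + \theta\cdot\tfrac12$, i.e. $\theta = 1 - \tfrac{1}{2\eta}$, and the geometric mean (with exponents $1-\theta$ and $\theta$) of the two outer weights is comparable to $(1+|z|^2)^{\eta(1-\theta)} = (1+|z|^2)^{1/2}$ on the middle line. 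Applying the $L^2$ form of the Hadamard three-lines theorem to $\hat g$ (entire and rapidly decaying on the relevant strip) with these weights gives
$$\int_{\mre z = 0}(1+|z|^2)^{1/2}|\hat g|^2\,|dz| \lesssim \Big(\int_{\mre z = 1/2-\eta}(1+|z|^2)^\eta|\hat g|^2\,|dz|\Big)^{1-\theta}\Big(\int_{\mre z = 1/2}|\hat g|^2\,|dz|\Big)^{\theta},$$
whence $\|g\|_{W_0^{1/2}} \lesssim \|g\|_{W_0^\eta}^{1-\theta}\|g\|_{L^2}^{\theta} \le \|g\|_{W_0^\eta} + \|g\|_{L^2}$ for $g \in C_c^\infty(\mathbb{R}_+)$; the general case follows by density, using completeness of $W_0^{1/2}(\mathbb{R}_+)$. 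A more hands-on variant, avoiding complex interpolation, is to write $\|g\|_{W_0^{1/2}}^2 = \int_0^\infty |g(s)|^2\,\frac{ds}{s} + |g|_{H^{1/2}(\mathbb{R}_+)}^2$, bound the first term by $\int_0^1 |g|^2 s^{-2\eta}\,ds + \int_1^\infty |g|^2\,ds \le \|g\|_{W_0^\eta}^2 + \|g\|_{L^2}^2$ (using $s^{-1} \le s^{-2\eta}$ on $(0,1]$), and bound the second via the continuous embedding $H^\eta(\mathbb{R}_+) \hookrightarrow H^{1/2}(\mathbb{R}_+)$ together with Lemma~\ref{lem:sobequiv} and the fact that $g(0) = 0$.

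The main obstacle is the critical index $\eta = 1/2$. As stressed around \eqref{eq:12norm}, $H^{1/2}(\mathbb{R}_+)$ and $W_0^{1/2}(\mathbb{R}_+)$ differ in an infinite-dimensional way, so---unlike the case $\eta \neq 1/2$---the passage to the limit needs care: one must make sure that the limit function genuinely belongs to the completion $W_0^{1/2}(\mathbb{R}_+)$, not merely that its $W_0^{1/2}$-norm is finite. What makes this go through is precisely the Hardy-type integrability $\int_0^\infty |g(s)|^2\,\frac{ds}{s} < \infty$ that Lemma~\ref{lem:sobequiv} furnishes for $g = f - f(0)\phi$; this is exactly the condition under which $g$ can be approximated by functions in $C_c^\infty(\mathbb{R}_+)$ in the $W_0^{1/2}$-norm. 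One should also remark that, although the Mellin transform on $\mre z = 0$ is defined as a boundary value, the singularity at $z = 0$ is inactive here since $g(0) = 0$, so $\hat g$ is analytic across that line and the argument applies without modification.
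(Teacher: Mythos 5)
Your reduction is exactly the paper's: write $f = f(0)\phi + g$ with $g = f - f(0)\phi$, use Lemma~\ref{lem:sobequiv} (and $|f(0)| \lesssim \|f\|_{H^\eta(\mathbb{R}_+)}$) to place $g$ in $W_0^\eta(\mathbb{R}_+) \cap L^2(\mathbb{R}_+)$, and reduce to the embedding of this intersection into $W_0^{1/2}(\mathbb{R}_+)$. Where you differ is in how that embedding is obtained: the paper simply invokes that $(W_0^\eta(\mathbb{R}_+))_{0\le\eta\le1}$ is an interpolation scale, so that $g \in W_0^\eta \cap W_0^0$ lies in the intermediate space $W_0^{1/2}$ with the interpolation bound, whereas you prove the relevant log-convexity inequality by hand on the Mellin side. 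Your bookkeeping is right ($\theta = 1 - \tfrac{1}{2\eta}$, geometric-mean weight $(1+|z|^2)^{1/2}$ on $\mre z = 0$), and the weighted $L^2$ three-lines inequality you need is correct; it can be justified, for instance, by pairing against a unit vector $h \in L^2$ and applying Hadamard to $z \mapsto \int_{\mathbb{R}} \hat g\bigl(x(z)+iy\bigr)(1+y^2)^{\eta(1-z)/2}\,\overline{h(y)}\,dy$, which is bounded and analytic on the strip because $\hat g$ is entire and rapidly decreasing for $g \in C_c^\infty(\mathbb{R}_+)$. Your elementary variant ($s^{-1}\le s^{-2\eta}$ on $(0,1]$ for the Hardy term, $H^\eta(\mathbb{R}_+) \hookrightarrow H^{1/2}(\mathbb{R}_+)$ for the seminorm) is also correct and bypasses interpolation altogether. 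The one soft spot, which you identify but settle only by assertion, is membership in the completion: since $W_0^{1/2}(\mathbb{R}_+)$ is defined as a completion of $C_c^\infty(\mathbb{R}_+)$, finiteness of the $W_0^{1/2}$-norm of $g$ is not by itself enough, and your claim that finiteness of $\int_0^\infty |g|^2\,ds/s$ is exactly what guarantees approximability is the (true, but nontrivial) Lions--Magenes characterization of $H^{1/2}_{00}$, which should be cited (cf.\ \cite{LM72, CS85}) or proved. Within your scheme it can be fixed directly: truncate $g$ at infinity by smooth cutoffs (convergent in both $W_0^\eta$ and $L^2$), then approximate in $W_0^\eta$ by test functions with supports in a fixed compact set, on which the $W_0^\eta$-norm dominates the $L^2$-norm; your three-lines estimate applied to differences then makes the approximating sequence Cauchy in $W_0^{1/2}$. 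This is precisely the step the paper's abstract interpolation argument packages automatically, which is what its shorter proof buys; your version, in exchange, makes the interpolation inequality explicit and offers an elementary alternative.
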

\begin{proof}
	Suppose that $f \in H^{\eta}(\mathbb{R}_+)$. Then, by Lemma~\ref{lem:sobequiv}, $f-f(0)\phi \in W_0^\eta(\mathbb{R}_+)$ and $f-f(0)\phi \in L^2(\mathbb{R}_+) = W_0^0(\mathbb{R}_+)$. But $(W_0^\eta(\mathbb{R}_+))_{0 \leq \eta \leq 1}$ is an interpolation scale, and thus $f-f(0)\phi \in W^{1/2}_0(\mathbb{R}_+)$ with 
	$$\|f-f(0)\phi\|_{W^{1/2}_0(\mathbb{R}_+)} \lesssim \|f - f(0)\phi\|_{H^{\eta}(\mathbb{R}_+)} \lesssim \|f\|_{H^{\eta}(\mathbb{R}_+)} + |f(0)|.$$
	This yields the conclusion.
\end{proof}
\subsection{Sobolev spaces on $\Gamma$ and single layer potentials} \label{subsec:singlelayer}
We introduced the different Sobolev spaces on $\mathbb{R}_+$ in order to give the following description of $H^{\eta}(\Gamma)$, see \cite[Lemma~2.7]{CS85}. It is stated in terms of the decomposition of $f \colon \Gamma \to \mathbb{C}$ into the triple $(f_-, f_+, f_c)$ of functions supported in $[0,1)$, described in Subsection~\ref{subsec:desc}.
\begin{lemma} \label{lem:sobgamma}
$f \in H^{\eta}(\Gamma)$ if and only if $f_+ + f_- \in H^{\eta}(\mathbb{R}_+)$, $f_+ - f_- \in W_0^{\eta}(\mathbb{R}_+)$ and $f_c \in W_0^{\eta}(\mathbb{R}_+)$. Furthermore,
$$\|f\|_{H^{\eta}(\Gamma)}^2 \approx \|f_+ + f_-\|_{H^{\eta}(\mathbb{R}_+)}^2 + \|f_+ - f_-\|_{W_0^{\eta}(\mathbb{R}_+)}^2 + \|f_c\|_{W_0^{\eta}(\mathbb{R}_+)}^2.$$
\end{lemma}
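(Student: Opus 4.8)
\emph{Plan.} The statement is a two-sided norm equivalence, and it suffices to prove it for a dense class of functions that are smooth on each of $\Gamma_\pm,\Gamma_c$ and matched across the corner, then pass to the limit; the substance lies in the estimate. Since $\Gamma$ is a piecewise $C^3$ Jordan curve with a genuine corner (in particular no cusp, so $\Gamma$ is chord-arc), for $0 \le \eta < 1$ the $H^\eta(\Gamma)$-norm is equivalent to the intrinsic Gagliardo norm $\|f\|_{L^2(\Gamma)}^2 + \iint_{\Gamma\times\Gamma} |f(x)-f(y)|^2|x-y|^{-1-2\eta}\,d\sigma(x)\,d\sigma(y)$, and for $\eta = 1$ to $\|f\|_{L^2(\Gamma)}^2 + \|df/d\sigma\|_{L^2(\Gamma)}^2$. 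Multiplication by $\chi$ and by $1-\chi$ is bounded on $H^\eta(\Gamma)$, so $\|f\|_{H^\eta(\Gamma)} \approx \|\chi f\|_{H^\eta(\Gamma)} + \|(1-\chi)f\|_{H^\eta(\Gamma)}$. As $(1-\chi)f$ is supported on the smooth ($C^3$) part of $\Gamma$ and its associated $f_c$ is supported in a compact subset of $(0,\infty)$, where the Kondratiev weight $s^{-2\eta}$ is bounded above and below, the $C^1$-invariance of $H^\eta$ on a smooth arc gives $\|(1-\chi)f\|_{H^\eta(\Gamma)}^2 \approx \|f_c\|_{H^\eta(\mathbb{R}_+)}^2 \approx \|f_c\|_{W_0^\eta(\mathbb{R}_+)}^2$. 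It thus remains to treat $\chi f$, supported on $\Gamma_+ \cup \Gamma_-$ near the corner and corresponding to the pair $(f_+, f_-)$.

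I split the Gagliardo integral of $\chi f$ (up to an $L^2(\Gamma)$-error from long-range interactions) into the same-side pieces on $\Gamma_\pm\times\Gamma_\pm$ and the cross piece on $\Gamma_+\times\Gamma_-$. From $z_+(s) = s + O(s^2)$ and $z_-(s) = e^{i\alpha}s + O(s^2)$ one gets the chord estimates $|z_\pm(s) - z_\pm(t)| \asymp |s-t|$ and, crucially using $\alpha \neq \pi$ (so $|\cos\alpha| < 1$), $|z_+(s) - z_-(t)|^2 \asymp s^2 + t^2 \asymp (s+t)^2$. Writing $g = f_+ + f_-$ and $h = f_+ - f_-$, the same-side pieces are comparable to $|f_+|_{H^\eta(\mathbb{R}_+)}^2 + |f_-|_{H^\eta(\mathbb{R}_+)}^2$, which equals $\tfrac12(|g|_{H^\eta(\mathbb{R}_+)}^2 + |h|_{H^\eta(\mathbb{R}_+)}^2)$ by the parallelogram identity for the quadratic Gagliardo seminorm, while the cross piece is comparable to
\[ C := \int_0^\infty\!\!\int_0^\infty \frac{|f_+(s) - f_-(t)|^2}{(s+t)^{1+2\eta}}\,ds\,dt. \]
(For $\eta = 1$ the seminorm is $\|(f_\pm)'\|_{L^2}^2$, the cross piece is absent, and the matching $h(0) = f_+(0) - f_-(0) = 0$ comes from continuity of $H^1(\Gamma)$-functions at the corner.)

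The heart of the matter is a two-sided bound for $C$ modulo the quantities already under control. Inserting $f_+(s) - f_-(t) = h(s) + (f_-(s) - f_-(t))$ and integrating the first term in $t$ gives $C \lesssim \int_0^\infty |h(s)|^2 s^{-2\eta}\,ds + |f_-|_{H^\eta(\mathbb{R}_+)}^2$, using $(s+t)^{-1-2\eta} \le |s-t|^{-1-2\eta}$ for the second term. For the reverse estimate, restrict $C$ to $\{t < s\}$, use $|f_+(s) - f_-(t)|^2 \ge \tfrac12 |h(s)|^2 - |f_-(s)-f_-(t)|^2$ together with $\int_0^s (s+t)^{-1-2\eta}\,dt = c_\eta\, s^{-2\eta}$ (with $c_\eta > 0$) and again $(s+t)^{-1-2\eta} \le |s-t|^{-1-2\eta}$, to obtain $\int_0^\infty |h(s)|^2 s^{-2\eta}\,ds \lesssim C + |f_-|_{H^\eta(\mathbb{R}_+)}^2$. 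Since $|f_-|_{H^\eta(\mathbb{R}_+)}^2 \le \tfrac12(|g|_{H^\eta(\mathbb{R}_+)}^2 + |h|_{H^\eta(\mathbb{R}_+)}^2)$, assembling the same-side and cross contributions yields
\[ \|\chi f\|_{H^\eta(\Gamma)}^2 \approx |g|_{H^\eta(\mathbb{R}_+)}^2 + |h|_{H^\eta(\mathbb{R}_+)}^2 + \int_0^\infty |h|^2 s^{-2\eta}\,ds + \|\chi f\|_{L^2(\Gamma)}^2 \approx \|g\|_{H^\eta(\mathbb{R}_+)}^2 + \|h\|_{W_0^\eta(\mathbb{R}_+)}^2, \]
the asserted identity for the corner part. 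When $\eta \neq 1/2$ the lower bound for $C$ could be bypassed via the fractional Hardy inequality once the matching $h(0) = 0$ is known (from Sobolev embedding when $\eta > 1/2$, automatic when $\eta < 1/2$); but at the critical index $\eta = 1/2$, where Hardy fails and $W_0^{1/2}(\mathbb{R}_+) \subsetneq H^{1/2}(\mathbb{R}_+) \cap L^2(\mathbb{R}_+)$, the cross integral $C$ is precisely the mechanism enforcing the extra integrability $h \in W_0^{1/2}$.

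Accordingly, the main obstacle is the corner cross-term, most acutely at $\eta = 1/2$: neither half of the two-sided bound for $C$ is comparable to an $H^{1/2}$-seminorm, and the equivalence rests on two facts — that the corner is nondegenerate, giving $|z_+(s) - z_-(t)| \gtrsim s + t$ (this is where $\alpha \neq \pi$ enters, and it would fail at a cusp), and that $\int_0^\infty (s+t)^{-1-2\eta}\,dt$ reproduces exactly the Kondratiev weight $s^{-2\eta}$. The remaining bookkeeping — the density reduction, the partition of unity, and control of the long-range interactions in the Gagliardo integral on the overlap region $\{0 < \chi < 1\}$ — is routine, relying only on the chord-arc property of $\Gamma$ and boundedness of smooth multipliers on $H^\eta(\Gamma)$.
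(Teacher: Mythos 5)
The paper offers no proof of this lemma at all: it is quoted directly from Costabel--Stephan \cite[Lemma~2.7]{CS85}, so there is no internal argument to compare against. Your proof is, in essence, the standard argument behind that cited result, and it is sound: reduce to the Gagliardo norm on the chord-arc curve, localize with $\chi$, identify the same-arc contributions with $|f_\pm|_{H^\eta(\mathbb{R}_+)}^2$ (hence with $\tfrac12(|g|^2_{H^\eta}+|h|^2_{H^\eta})$ by the parallelogram law), and extract the Kondratiev weight $\int_0^\infty |h|^2 s^{-2\eta}\,ds$ from the two-sided estimate of the cross integral $C$, which is exactly the mechanism that produces the matching condition at the critical index $\eta=1/2$, where the Hardy-inequality shortcut is unavailable. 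The computations I checked ($\int_0^\infty (s+t)^{-1-2\eta}\,dt = \tfrac{1}{2\eta}s^{-2\eta}$, $\int_0^s(s+t)^{-1-2\eta}\,dt = c_\eta' s^{-2\eta}$, the comparison $(s+t)^{-1-2\eta}\le |s-t|^{-1-2\eta}$, and the $L^2$-size of the long-range and edge-of-cutoff interactions, which are harmless because $\supp(\chi f)$ is compactly contained in the parametrized arcs) all go through, as do the cases $\eta=1$ (continuity at the corner plus the integer Hardy inequality) and the trivial case $\eta=0$. What your proof buys over the paper's citation is precisely the transparency of where $W_0^\eta$ and the critical-index subtlety come from.

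Two corrections/caveats. First, the nondegeneracy estimate is misattributed: $|z_+(s)-z_-(t)|^2 = s^2+t^2-2st\cos\alpha + O\bigl((s+t)^3\bigr)$ is bounded below by $c(s+t)^2$ exactly when $\cos\alpha\neq 1$, i.e.\ when there is no cusp ($\alpha\neq 0,2\pi$); the case $\alpha=\pi$ is harmless (the leading term is then exactly $(s+t)^2$), so the hypothesis you are actually using is $0<\alpha<2\pi$, not $\alpha\neq\pi$, and your parenthetical ``$\alpha\neq\pi$ so $|\cos\alpha|<1$'' is false as stated. Under the paper's standing assumptions this does not affect validity. Second, since $H^\eta(\mathbb{R}_+)$ and $W_0^\eta(\mathbb{R}_+)$ are defined as completions, the ``if and only if'' statement needs a (routine, but not entirely free at $\eta=1/2$) remark that for functions supported in $[0,1)$ finiteness of the respective norms implies membership in the completions; your ``dense class and pass to the limit'' framing gestures at this but does not spell it out.
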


We will also make use of an alternative characterization of $H^{1/2}(\Gamma)$. For a charge $f$ on $\Gamma$, consider the single layer potential
$$Sf(x) = \frac{1}{2\pi} \int_{\Gamma} \log \frac{1}{|x-y|} f(y) \, d\sigma(y), \quad x \in \mathbb{R}^2.$$
Then, by Green's formula and the jump formulas for $\partial_n^{\inte} Sf$ and $\partial_n^{\exte} Sf$,
$$\partial_n^{\inte} S f = \frac{1}{2}(I - K^\ast)f, \quad \partial_n^{\exte} S f = \frac{1}{2}(-I - K^\ast)f$$
one sees that
\begin{equation} \label{eq:Sjumpform}
\langle Sf, f \rangle_{L^2(\Gamma)} = \int_{\mathbb{R}^2} |\nabla Sf|^2 \, dx > 0, \quad 0 \neq f \in L^2_0(\Gamma),
\end{equation}
cf. \cite[Theorem~4.9]{Verchota84} and \cite[Lemma~1]{KPS07}.
Here the requirement that $f \in L^2_0(\Gamma)$, that is, $\int_{\Gamma} f \, d\sigma = 0$, arises from the need for sufficient decay of $Sf(x)$ as $x \to \infty$ in order to apply Green's formula in the exterior. 

There is a unique $\rho_0 \in L^2(\Gamma)$ such that $\int_{\Gamma} \rho_0 \, d\sigma = 1$ and $S\rho_0 \equiv D$ is constant on $\Gamma$ \cite[Section~4]{Verchota84}. This equilibrium distribution $\rho_0$ of $\Gamma$ is an element of the one-dimensional kernel of $K^\ast - I$. It may happen that $D \leq 0$, which corresponds to the fact the logarithmic capacity $\capp(\Gamma) \geq 1$. However, in this case we may simply replace $\Gamma$ by $\varepsilon \Gamma$ for a sufficiently small $\varepsilon > 0$, so that $D > 0$. By the homogeneity of the kernel of $K$, there is no loss of generality in assuming that $D > 0$. Thus we assume that $\capp(\Gamma) < 1$ throughout.

This ensures that $S \colon L^2(\Gamma) \to L^2(\Gamma)$ is a positive definite operator without kernel. Indeed, if $L^2(\Gamma) \ni g = f + c \rho_0$ for some $f \in L^2_0$ and $c \in \mathbb{C}$, then
$$\langle Sg, g \rangle_{L^2(\Gamma)} = \langle Sf, f \rangle_{L^2(\Gamma)} + D|c|^2.$$
By Verchota's results, \cite[Theorem~4.11]{Verchota84}, we also know that $S \colon L^2(\Gamma) \to H^1(\Gamma)$ is an isomorphism. By duality, so is $S \colon H^{-1}(\Gamma) \to L^2(\Gamma)$. By interpolation \cite[Theorem~15.1]{LM72}, we also find that $S \colon H^{-1/2}(\Gamma) \to H^{1/2}(\Gamma)$ is an isomorphism, and furthermore that
$$\|f\|_{H^{-1/2}(\Gamma)}^2 \approx \langle Sf, f \rangle_{L^2(\Gamma)} =: \|f\|^2_{\mathcal{E}}, \quad f \in H^{-1/2}(\Gamma).$$
Thus the completion $\mathcal{E}$ of $L^2(\Gamma)$ in the $\| \cdot \|_{\mathcal{E}}$-norm is naturally isomorphic to $H^{-1/2}(\Gamma)$.
See \cite[Theorem~14 and Corollary~15]{Per19} for more details. 

The discussion also yields that $\mathcal{E}' = S\mathcal{E}$, equipped with the norm 
$$\|Sf\|_{\mathcal{E}'} = \|f\|_{\mathcal{E}}, \quad f \in H^{-1/2}(\Gamma),$$
is naturally isomorphic to $H^{1/2}(\Gamma)$. 
\begin{lemma} \label{lem:Eprime}
	$$\|f\|_{\mathcal{E}'}^2 = \langle S^{-1}f, f \rangle_{L^2(\Gamma)}, \quad f \in H^{1/2}(\Gamma),$$
yields an equivalent norm on $H^{1/2}(\Gamma)$, $\|f\|_{H^{1/2}(\Gamma)}^2 \approx \|f\|_{\mathcal{E}'}^2$.
\end{lemma}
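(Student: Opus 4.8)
The plan is to unwind the definitions assembled above and compose the mapping properties of $S$ with the norm equivalences already established in this subsection. Recall that $\mathcal{E}'$ was defined as $S\mathcal{E}$ equipped with $\|Sg\|_{\mathcal{E}'} = \|g\|_{\mathcal{E}}$, where $\|g\|_{\mathcal{E}}^2 = \langle Sg, g \rangle_{L^2(\Gamma)}$ on the dense subspace $L^2(\Gamma) \subset \mathcal{E}$, extended by continuity to all of $\mathcal{E} \cong H^{-1/2}(\Gamma)$. For $g \in H^{-1/2}(\Gamma)$ we read $\langle Sg, g\rangle_{L^2(\Gamma)}$ as the $H^{1/2}(\Gamma)$--$H^{-1/2}(\Gamma)$ duality pairing of $Sg$ with $g$; this coincides with the continuous extension just mentioned because $S \colon H^{-1/2}(\Gamma) \to H^{1/2}(\Gamma)$ is bounded and the pairing is continuous on $H^{1/2}(\Gamma) \times H^{-1/2}(\Gamma)$.

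First I would establish membership and the norm equivalence. Given $f \in H^{1/2}(\Gamma)$, the isomorphism $S \colon H^{-1/2}(\Gamma) \to H^{1/2}(\Gamma)$ produces a unique $g := S^{-1} f \in H^{-1/2}(\Gamma)$ with $f = Sg$, so $f \in \mathcal{E}'$ and $\|f\|_{\mathcal{E}'} = \|g\|_{\mathcal{E}}$. Combining $\|g\|_{\mathcal{E}} \approx \|g\|_{H^{-1/2}(\Gamma)}$ with the boundedness of $S^{-1} \colon H^{1/2}(\Gamma) \to H^{-1/2}(\Gamma)$ and of $S \colon H^{-1/2}(\Gamma) \to H^{1/2}(\Gamma)$ gives
$$\|f\|_{\mathcal{E}'} = \|S^{-1}f\|_{\mathcal{E}} \approx \|S^{-1}f\|_{H^{-1/2}(\Gamma)} \approx \|f\|_{H^{1/2}(\Gamma)},$$
so $\mathcal{E}' = H^{1/2}(\Gamma)$ with equivalent norms. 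Then I would verify the explicit formula: with $g = S^{-1}f$,
$$\|f\|_{\mathcal{E}'}^2 = \|g\|_{\mathcal{E}}^2 = \langle Sg, g \rangle_{L^2(\Gamma)} = \langle f, S^{-1}f \rangle_{L^2(\Gamma)},$$
the last pairing being the $H^{1/2}$--$H^{-1/2}$ bracket. Since the kernel $\log|x-y|^{-1}$ is symmetric, $\langle Sg, h\rangle_{L^2(\Gamma)} = \langle g, Sh\rangle_{L^2(\Gamma)}$ for $g,h \in L^2(\Gamma)$, and this self-adjointness passes to the limit on the duality pairing; taking $h = g = S^{-1}f$ converts the previous display into $\langle S^{-1}f, f\rangle_{L^2(\Gamma)}$. (In any case both sides equal the real number $\|g\|_{\mathcal{E}}^2$.)

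There is essentially no obstacle here beyond one point of interpretation, which is also the only thing that makes the lemma an exact identity rather than an approximate one: the a priori quadratic form $g \mapsto \langle Sg, g\rangle_{L^2(\Gamma)}$, defined only on the dense subspace $L^2(\Gamma)$, must genuinely extend to the $H^{1/2}$--$H^{-1/2}$ bracket on all of $\mathcal{E} \cong H^{-1/2}(\Gamma)$ on which the $\mathcal{E}$-norm was obtained by completion. This is immediate from the continuity of $S$ and of the duality pairing. Everything else is a direct composition of the isomorphisms and norm equivalences proved above.
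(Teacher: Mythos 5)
Your argument is correct and follows the same route the paper intends: the lemma is recorded as a direct consequence of the preceding facts that $S \colon H^{-1/2}(\Gamma) \to H^{1/2}(\Gamma)$ is an isomorphism, that $\|g\|_{\mathcal{E}}^2 = \langle Sg, g \rangle_{L^2(\Gamma)} \approx \|g\|_{H^{-1/2}(\Gamma)}^2$, and the definition $\|Sg\|_{\mathcal{E}'} = \|g\|_{\mathcal{E}}$, which is exactly what you compose. Your extra care about interpreting $\langle Sg, g\rangle_{L^2(\Gamma)}$ as the $H^{1/2}$--$H^{-1/2}$ duality pairing and using the symmetry of $S$ is consistent with the paper's setup and fills in the only interpretive point.
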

We shall always equip $H^{1/2}(\Gamma)$ either with the norm given in Lemma~\ref{lem:sobgamma} or the $\|\cdot\|_{\mathcal{E}'}$-norm, depending on the context. A different approach to Lemma~\ref{lem:Eprime} can be found in \cite{AMRZ17, AK16}, in which $S$ is gently but suitably modified to avoid the assumption that $\capp(\Gamma) < 1$.
\subsection{The Neumann--Poincar\'e operator} \label{subsec:spec}
The double layer potential of a charge $f$ on $\Gamma$ is given by
$$Df(x) = \frac{1}{\pi} \int_{\Gamma} \frac{\langle y - x, n_y \rangle}{|x-y|^2} \, f(y) \, d\sigma(y), \quad x \in \Gamma_{\inte} \cup \Gamma_{\exte}.$$
The NP operator is its direct value
$$Kf(x) = \frac{1}{\pi} \int_{\Gamma} \frac{\langle y - x, n_y \rangle}{|x-y|^2} \, f(y) \, d\sigma(y), \quad x \in \Gamma.$$
As $\Gamma$ is piecewise $C^3$, this latter integral exists in the ordinary sense for almost every $x \in \Gamma$, for sufficiently nice $f$. We will consider $K$ as a bounded operator $K \colon H^{s}(\Gamma) \to H^s(\Gamma)$, $0 \leq s \leq 1$. We denote by $K^\ast$ the $L^2(\Gamma)$-adjoint of $K$.

In the $\mathcal{E}$-norm, $K^\ast \colon \mathcal{E} \to \mathcal{E}$ is actually self-adjoint,
$$\langle K^\ast f, g \rangle_{\mathcal{E}} = \langle S K^\ast f, g \rangle_{L^2(\Gamma)} = \langle S f, K^\ast g \rangle_{L^2(\Gamma)} = \langle  f, K^\ast g \rangle_{\mathcal{E}}, \quad f,g \in H^{-1/2}(\Gamma).$$
This holds because of the Plemelj formula 
$$KS = SK^\ast \colon H^{-1/2}(\Gamma) \to H^{1/2}(\Gamma),$$
which also shows that  
$K^\ast \colon \mathcal{E} \to \mathcal{E}$ and $K \colon \mathcal{E}' \to \mathcal{E}'$ are unitarily equivalent.
\begin{lemma}
 $K \colon \mathcal{E}' \to \mathcal{E}'$ is self-adjoint.
\end{lemma}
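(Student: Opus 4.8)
The plan is to reduce the self-adjointness of $K \colon \mathcal{E}' \to \mathcal{E}'$ to the two facts already in hand: that $K^\ast \colon \mathcal{E} \to \mathcal{E}$ is self-adjoint, and that the single layer operator $S$ implements a unitary $\mathcal{E} \to \mathcal{E}'$ intertwining $K^\ast$ and $K$ via the Plemelj relation $KS = SK^\ast$. Concretely, since $S \colon H^{-1/2}(\Gamma) \to H^{1/2}(\Gamma)$ is an isomorphism and $\|Sf\|_{\mathcal{E}'} = \|f\|_{\mathcal{E}}$ by definition, the map $S$ is unitary from $\mathcal{E}$ onto $\mathcal{E}'$. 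Conjugating the Plemelj relation gives $K = S K^\ast S^{-1}$ on $\mathcal{E}'$, and since self-adjointness is preserved under unitary equivalence, $K \colon \mathcal{E}' \to \mathcal{E}'$ is self-adjoint. One should first record that $K$ is bounded on $H^{1/2}(\Gamma) = \mathcal{E}'$, as noted in Subsection~\ref{subsec:spec}, and that $\|\cdot\|_{\mathcal{E}'} \approx \|\cdot\|_{H^{1/2}(\Gamma)}$ by Lemma~\ref{lem:Eprime}, so that $K$ is genuinely an everywhere-defined bounded operator on the Hilbert space $\mathcal{E}'$ and it suffices to verify symmetry.

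If one prefers a direct computation over the unitary-equivalence argument: polarizing $\|h\|_{\mathcal{E}'}^2 = \langle S^{-1}h, h \rangle_{L^2(\Gamma)}$ and using that $S$, hence $S^{-1}$, is symmetric on $L^2(\Gamma)$, one obtains $\langle h_1, h_2 \rangle_{\mathcal{E}'} = \langle S^{-1}h_1, h_2 \rangle_{L^2(\Gamma)}$ for $h_1, h_2 \in \mathcal{E}'$. Rewriting the Plemelj identity $KS = SK^\ast$ as $S^{-1}K = K^\ast S^{-1}$ on $H^{1/2}(\Gamma)$, we then compute, for $f, g \in \mathcal{E}'$,
\begin{equation*}
\langle Kf, g \rangle_{\mathcal{E}'} = \langle S^{-1}Kf, g \rangle_{L^2(\Gamma)} = \langle K^\ast S^{-1}f, g \rangle_{L^2(\Gamma)} = \langle S^{-1}f, Kg \rangle_{L^2(\Gamma)} = \langle f, Kg \rangle_{\mathcal{E}'},
\end{equation*}
where the third equality is the definition of $K^\ast$ as the $L^2(\Gamma)$-adjoint of $K$, and the last equality is again the formula for the $\mathcal{E}'$-inner product. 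This is symmetry, and together with boundedness it yields self-adjointness.

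The argument is entirely formal, so there is no substantial obstacle; the only points that require care are bookkeeping ones — namely that the Plemelj relation $KS = SK^\ast$ holds as an identity of operators $H^{-1/2}(\Gamma) \to H^{1/2}(\Gamma)$, so that $S^{-1}K = K^\ast S^{-1}$ is meaningful on $H^{1/2}(\Gamma)$, and that $S^{-1}$ and $K^\ast$ act consistently across the relevant Sobolev scales. Both of these are already in place from Subsection~\ref{subsec:singlelayer} and the discussion immediately preceding the statement.
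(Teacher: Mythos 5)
Your first argument (unitary equivalence of $K$ on $\mathcal{E}'$ with $K^\ast$ on $\mathcal{E}$ via $S$ and the Plemelj relation, combined with the self-adjointness of $K^\ast \colon \mathcal{E} \to \mathcal{E}$) is exactly the paper's proof, and your second, direct computation is just the same identity verified on the other side of $S$. The proposal is correct and takes essentially the same approach as the paper.
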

We now discuss the basic spectral theory of $K \colon H^{1/2}(\Gamma) \to H^{1/2}(\Gamma)$. First of all, 
$$K+I \colon H^{1/2}(\Gamma) \to H^{1/2}(\Gamma) \textrm{ and } K-I \colon H^{1/2}(\Gamma)/\mathbb{C} \to H^{1/2}(\Gamma)/\mathbb{C}$$ 
are invertible for any Lipschitz domain; this follows by interpolation between the corresponding results on $L^2(\Gamma)$ and $H^1(\Gamma)$, due to Verchota \cite{Verchota84}. That 
$$\sigma(K, H^{1/2}(\Gamma)) \subset (-1,1]$$ 
is also well known, and follows from the invertibility of $K+I$ and the variational principle,
$$\frac{\langle K^\ast f, f \rangle_{\mathcal{E}}}{\langle f, f \rangle_{\mathcal{E}}} = \frac{\int_{\Gamma_{\exte}} |\nabla Sf |^2 \, dx - \int_{\Gamma_{\inte}} |\nabla Sf |^2 \, dx }{\int_{\Gamma_{\exte}} |\nabla Sf |^2 \, dx + \int_{\Gamma_{\inte}} |\nabla Sf |^2 \, dx},$$
valid for $f \in \mathcal{E} \simeq H^{-1/2}(\Gamma)$ such that $\int_\Gamma f \, d\sigma = \frac{1}{D}\langle f, \rho_0 \rangle_{\mathcal{E}}= 0$. This formula is established by combining Green's formula with the jump formulas for $\partial_n^{\inte} Sf$ and $\partial_n^{\exte} Sf$, cf. \eqref{eq:Sjumpform}. $\lambda = 1$ is a simple eigenvalue of $K \colon H^{1/2}(\Gamma) \to H^{1/2}(\Gamma)$, with the constants functions as eigenfunctions. 

Except for the simple eigenvalue $\lambda = 1$, the spectrum $\sigma(K, H^{1/2}(\Gamma))$ is symmetric with respect to $\lambda = 0$. In fact, there is a unitary map $V \colon \mathcal{E}'/\mathbb{C} \to \mathcal{E}'/\mathbb{C}$ such that 
$$V^{-1} K V = -K \colon \mathcal{E}'/\mathbb{C} \to \mathcal{E}'/\mathbb{C}.$$
More precisely, as elucidated in \cite[Section 5]{KPS07}, $K \colon \mathcal{E}'/\mathbb{C} \to \mathcal{E}'/\mathbb{C}$ is unitarily equivalent to the operator of the angle between the spaces of single layer potential fields in $\mathbb{R}^2$ and harmonic fields supported only in $\Gamma_{\inte}$, both equipped with the energy norm. This principle remains valid for Lipschitz domains \cite{PP14}. However, the act of harmonic conjugation (i.e. the interior Hilbert transform) yields a unitary equivalence between this operator and its additive inverse. 
We refer to \cite[Proposition~6]{KPS07} for further details in the case of a smooth boundary $\Gamma$; the considerations for Lipschitz domains are very similar, and it would lead us too far away to present them here. In any case, we will only rely on this symmetry for the eigenvalues, which have been treated in \cite[Theorem~2.1]{HKL17}.

As mentioned in the introduction, for our particular curve $\Gamma$ with a single corner of angle $\alpha$, it has been shown in \cite{BZ19, PP17} that
$$\sigma_{\ess}(K, H^{1/2}(\Gamma)) = [-|1-\alpha/\pi|, |1-\alpha/\pi|].$$

%For $0 \leq s < 1$, the double layer potential itself is bounded as an operator $D \colon H^{s}(\Gamma) \to H^{s+1/2}(\Gamma_{\inte})$, and by the classical jump formulas the trace of $Df|_{\Gamma_{\inte}}$ satisfies that
%$$\Tr_{\inte} Df = (K+I)f.$$
%In the exterior, $Df$ belongs to $H^{s+1/2}$ locally, and we have that
%$$\Tr_{\exte} Df = (K-I)f.$$

\section{Mellin analysis on $\mathbb{R}_+$} \label{sec:mellin}
For a function $f \colon \Gamma \to \mathbb{C}$ and $s \in \mathbb{R}_+$, we let
$$K_\pm f(s) = (Kf)_\pm(s), \quad K_c f(s) = (Kf)_c(s).$$
 Since $\Gamma$ is $C^3$ outside the corner, the integral kernel of $K_c$, defined on $\mathbb{R}_+ \times \Gamma$, is $C^1$ and compactly supported. Therefore
$$K_c \colon L^2(\Gamma) \to C_c^1((0,1])$$
is continuous.

To analyze $K_\pm$ we will now recall a number of calculations from \cite{C83, CS83}.
For the cut-off function $\phi$ around $0$, described in Subsection~\ref{subsec:mellin}, we have that
\begin{equation} \label{eq:Kexpansion}
K_\pm f(s) = \phi(s)K_\alpha f_\mp(s) + \phi(s)V_\pm f_\mp(s) + R_\pm f(s).
\end{equation}
Here, for a function $f$ on $\mathbb{R}_+$,
$$K_\alpha f(s) = \int_0^\infty K_\alpha(s, t) f(t) \, dt, \quad V_\pm f(s) = \int_0^\infty V_\pm(s, t) f(t)\, dt,$$
where 
$$K_\alpha(s,t) = -\frac{1}{\pi} \mim \left( \frac{e^{i\alpha}}{te^{i\alpha} - s}\right),$$
and 
$$V_\pm(s,t) = \mp \frac{1}{2\pi} \mre \left( \frac{e^{i\alpha}(\kappa_{\mp}t^2 e^{i\alpha} - 2\kappa_{\mp} st + \kappa_{\pm} s^2)}{(te^{i\alpha}-s)^2}\right).$$
$R_\pm$ are smoothing operators whose kernels have bounded derivatives in $s$,
$$R_\pm \colon L^2(\Gamma) \to C^1((0,1]).$$

Since its integral kernel is homogeneous of degree $-1$, $K_\alpha$ is a Mellin convolution operator,
$$K_\alpha f(s) = \int_0^\infty K_\alpha(s/t, 1) f(t) \, \frac{dt}{t},$$
and
$$\widehat{K}_\alpha(z) := \mathcal{M}(K_\alpha(\cdot, 1))(z) = \frac{\sin((\pi - \alpha)z)}{\sin(\pi  z)}.$$
Note that $\widehat{K}_\alpha$ is analytic and rapidly decreasing in $-1 < \mre z < 1$. 
 Let $f \in C^\infty_c([0,\infty))$. Then $\hat{f}(z)$ is defined for $\mre z > 0$, but has a meromorphic extension to $\mre z > -1$ with a simple pole at $0$, see Lemma~\ref{lem:mellinsing}. Taking this pole into account yields that $\mathcal{M}(K_\alpha f)$ also has a meromorphic extension from $0 < \mre z < 1$ to $-1 < \mre z < 1$ with 
$$\mathcal{M}(K_\alpha f)(z) = \widehat{K}_\alpha(z) \hat{f}(z) = \frac{\sin((\pi - \alpha)z)}{\sin(\pi  z)} \hat{f}(z).$$
The kernels of $V_\pm$ are homogeneous of degree $0$ and formally one has
\begin{align*}
\mathcal{M}(V_\pm f)(z) &= \widehat{V}_\pm(z) \hat{f}(z+1) \\ &:= \mp \frac{(z+i)(\kappa_{\mp} \cos((\pi-\alpha)z) + \kappa_{\pm} \cos((\pi-\alpha)(z+1))}{2\sin(\pi z)}\hat{f}(z+1).
\end{align*}
However, $\mathcal{M}(V_\pm(\cdot, 1))(z)$ does not actually exist for any $z$. Instead, a more sophisticated argument, which looks at the meromorphic continuations of the individual terms of $V_\pm$, is needed. This is carried out in \cite[Theorem~2]{C83}, from which we extract a special case.
\begin{lemma} \label{lem:Vcpct}
The operators $Z_\pm \colon L^2(\Gamma) \to H^1(\mathbb{R}_+)$ are bounded, where
$$Z_\pm f = \phi V_\pm f_{\mp}, \quad f \in L^2(\Gamma).$$ 
\end{lemma}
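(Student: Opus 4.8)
The claim is that $Z_\pm f = \phi V_\pm f_\mp$ maps $L^2(\Gamma)$ boundedly into $H^1(\mathbb{R}_+)$. Since $f \mapsto f_\mp$ is bounded from $L^2(\Gamma)$ into $L^2(\mathbb{R}_+)$, it suffices to show that $g \mapsto \phi V_\pm g$ is bounded from $L^2(\mathbb{R}_+)$ into $H^1(\mathbb{R}_+)$. The plan is to analyze this through the Mellin transform, following the strategy sketched in the excerpt and carried out in full in \cite[Theorem~2]{C83}. The kernels $V_\pm(s,t)$ are homogeneous of degree $0$, so $V_\pm$ acts on the Mellin side as multiplication by $\widehat{V}_\pm(z)$ together with the shift $\hat{g}(z) \mapsto \hat{g}(z+1)$; concretely, if $g \in C_c^\infty([0,\infty))$ then $\mathcal{M}(V_\pm g)(z) = \widehat{V}_\pm(z)\,\hat g(z+1)$ as a meromorphic identity, where
$$\widehat{V}_\pm(z) = \mp \frac{(z+i)\bigl(\kappa_\mp \cos((\pi-\alpha)z) + \kappa_\pm \cos((\pi-\alpha)(z+1))\bigr)}{2\sin(\pi z)}.$$
The cut-off $\phi$ is there precisely to kill the behavior at $s \to \infty$ and localize to a half-strip; after multiplying by $\phi$ one only needs good estimates of the Mellin transform on a vertical line to the right, say $\mre z = 1/2$ (so that $\hat g(z+1)$ is evaluated on $\mre z = 3/2$, where $\hat g$ is well-behaved for $g \in L^2$ via Lemma~\ref{lem:melliniso} and Plancherel for the Mellin transform).

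The key steps, in order: (1) Record the Mellin–Plancherel isometry: $g \in L^2(\mathbb{R}_+)$ iff $\hat g \in L^2(\mre z = 1/2,\,|dz|)$ with equal norms, and recall from the discussion around \eqref{eq:12norm} and Lemma~\ref{lem:sobequiv} that $h \in H^1(\mathbb{R}_+)$ is controlled (after subtracting $h(0)\phi$) by $\int_{\mre z = -1/2}(1+|z|^2)|\hat h(z)|^2\,|dz|$ plus $\|h\|_{L^2}^2$, i.e.\ by a weighted $L^2$ bound on the line $\mre z = -1/2$. (2) Compute the poles of $\widehat{V}_\pm$ in the relevant strip: $\sin(\pi z)$ vanishes at the integers, so the only pole of concern between the line where $\hat g(z+1)$ lives and the target line $\mre z = -1/2$ is at $z = 0$. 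At $z=0$, $\hat g(z+1)$ is regular (it is $\hat g$ on $\mre z = 1$, fine for $L^2$ after cutting off), so the simple pole of $\widehat{V}_\pm$ at $z=0$ produces, upon shifting the contour of the inverse Mellin transform from $\mre z = 1/2$ to $\mre z = -1/2$, a residue term which is a constant multiple of $\phi(s)s^0 = \phi(s)$ — exactly the finite-dimensional $\mathbb{C}\phi$ piece allowed in $W^1(\mathbb{R}_+) = \mathbb{C}\phi \oplus W_0^1(\mathbb{R}_+)$, and the remaining contour integral on $\mre z = -1/2$ gives the $W_0^1$ part. (3) Estimate $\widehat{V}_\pm(z)$ on $\mre z = -1/2$: there $|\sin(\pi z)|$ is bounded below, $\cos((\pi-\alpha)z)$ and $\cos((\pi-\alpha)(z+1))$ grow like $e^{(\pi-\alpha)|\mim z|}$ while we must compare against $\hat g(z+1)$ on $\mre z = 1/2$, and the factor $(z+i)$ contributes the needed single power of $|z|$; the exponential growth is absorbed because shifting $\hat g$ by one unit in a strip of analyticity of width governed by the smoothness of $\Gamma$ exactly compensates (this is the mechanism whereby $V_\pm$, though "of order $0$" formally, gains a derivative after localization). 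This yields
$$\int_{\mre z = -1/2} (1+|z|^2)\,|\mathcal{M}(\phi V_\pm g - c\phi)(z)|^2\,|dz| \;\lesssim\; \int_{\mre z = 1/2}|\hat g(z)|^2\,|dz| \;=\; \|g\|_{L^2(\mathbb{R}_+)}^2,$$
and a similar, easier $L^2$ bound, so that $\phi V_\pm g \in W^1(\mathbb{R}_+) \cap L^2(\mathbb{R}_+) = H^1(\mathbb{R}_+)$ by Lemma~\ref{lem:sobequiv}. (4) Pass from $C_c^\infty([0,\infty))$ to general $g \in L^2(\mathbb{R}_+)$ by density, using the boundedness just established.

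The main obstacle is step (2)–(3): one cannot literally write $\mathcal{M}(V_\pm g)(z) = \widehat{V}_\pm(z)\hat g(z+1)$ as an honest transform because, as the excerpt stresses, $\mathcal{M}(V_\pm(\cdot,1))(z)$ does not exist for any $z$ — the degree-$0$ homogeneity makes the kernel fail to be Mellin-transformable on its own. The rigorous route, as in \cite[Theorem~2]{C83}, is to split $V_\pm(s,t)$ into its constituent rational-in-$(s/t)$ pieces $\tfrac{s^2}{(te^{i\alpha}-s)^2}$, $\tfrac{st}{(te^{i\alpha}-s)^2}$, $\tfrac{t^2}{(te^{i\alpha}-s)^2}$, handle each via its own meromorphic continuation after integration by parts or after pairing against $\phi$, and track the single pole at $z=0$ carefully; the delicate bookkeeping is that the spurious terms from the individual pieces cancel, leaving only the genuine residue at $z=0$. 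I would cite \cite[Theorem~2]{C83} for this splitting and the resulting estimates, and present here only the extraction of the special case: that after the cut-off $\phi$, one lands in $H^1(\mathbb{R}_+)$ with the stated operator bound.
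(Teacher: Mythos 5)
Your proposal is, in substance, the same as the paper's treatment: the paper offers no independent proof of this lemma, but extracts it as a special case of \cite[Theorem~2]{C83}, and that citation is exactly where your argument also places the real work (the splitting of $V_\pm$ into its individual homogeneous pieces, their meromorphic continuations, and the cancellation bookkeeping). Your surrounding Mellin sketch has the right mechanism --- the simple pole of $\widehat{V}_\pm$ at $z=0$ accounts for the $\mathbb{C}\phi$ component and a weighted $L^2$ bound on $\mre z = -1/2$ gives the $W_0^1(\mathbb{R}_+)$ part, so that Lemma~\ref{lem:sobequiv} lands you in $H^1(\mathbb{R}_+)$ --- but two details should be corrected, neither of which is fatal since the rigorous step is delegated to Costabel. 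First, the opening reduction ``it suffices to show $g \mapsto \phi V_\pm g$ is bounded from $L^2(\mathbb{R}_+)$ into $H^1(\mathbb{R}_+)$'' is too strong: since $V_\pm(s,t) \to \mp\kappa_\mp/(2\pi)$ as $t \to \infty$ for fixed $s$, the integral defining $V_\pm g(s)$ need not even converge for a general $g \in L^2(\mathbb{R}_+)$; you must use that $f_\mp$ is supported in $[0,1)$, which is also what legitimizes evaluating $\hat{f}_\mp(z+1)$ on the lines $\mre z \in [-1/2,1/2]$ in your contour shift. Second, the heuristic in your step (3) --- that the exponential growth of the cosines is ``absorbed'' by a strip of analyticity of $\hat g$ tied to the smoothness of $\Gamma$ --- is off: on vertical lines the numerator of $\widehat{V}_\pm(z)$ grows like $|z|e^{|\pi-\alpha|\,|\mim z|}$ while $|\sin(\pi z)|$ grows like $e^{\pi|\mim z|}$, so $\widehat{V}_\pm(z)$ in fact decays exponentially away from its poles; the weight $(1+|z|^2)$ on $\mre z = -1/2$ is then harmless, and no analyticity of $\hat g$ beyond what compact support provides is needed.
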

We also need to know that the remainder terms of the expansion \eqref{eq:Kexpansion} yield continuous functions at the corner.
\begin{lemma} \label{lem:remaindercont}
The operator $C \colon L^2(\Gamma) \to W^1_0(\mathbb{R}_+)$ is bounded, where
$$Cf = \phi(V_+f_- - V_-f_+) + (R_+ - R_-)f.$$
\end{lemma}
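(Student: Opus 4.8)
The plan is to combine two ingredients: Costabel's meromorphic-continuation analysis of the zeroth-order operators $V_\pm$ (as in Lemma~\ref{lem:Vcpct}) together with the crucial cancellation that occurs when one forms the \emph{difference} $V_+ f_- - V_- f_+$. The point is that $\phi V_\pm f_\mp$ individually lands only in $H^1(\mathbb{R}_+)$ by Lemma~\ref{lem:Vcpct} — not in $W^1_0(\mathbb{R}_+)$ — because the symbol $\widehat V_\pm(z)$ has a pole at $z=0$ contributing a nonzero constant term $\sim f_\mp(0)$ near $s=0$ (after meromorphic continuation and collecting residues, the residue of $\widehat V_\pm(z)\hat f_\mp(z+1)/\dots$ at $z=0$ is a multiple of $i\kappa_\mp f_\mp(0)$, since $\hat f_\mp(z+1)$ is regular there). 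For $f$ to give a function in $W^1_0(\mathbb{R}_+)$ we need exactly this constant term to vanish. Forming $V_+f_- - V_-f_+$ the leading constants are $\propto \kappa_- f_-(0) - \kappa_+ f_+(0)$ — but that is not automatically zero. So the real mechanism must be more delicate: one has to track \emph{all} the residue contributions at $z=0$ from the full symbol, including the pieces with $\kappa_\pm$ paired with $\cos((\pi-\alpha)(z+1))$, and exploit that near the corner $f_+(0)=f_-(0)=f(0)$ (both equal the value of $\chi f$ at the corner), so the residues assemble into something proportional to $f(0)$ times a coefficient that vanishes by a trigonometric identity at $z=0$, or else the $(z+i)$ prefactor or the $\mim$ versus $\mre$ structure of the kernels forces cancellation. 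I would first isolate precisely which term in \eqref{eq:Kexpansion} produces the obstruction to $W^1_0$-membership and verify that the obstruction is a single constant multiple of the corner value.

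Concretely, the steps are as follows. First, recall from Costabel \cite{C83} the meromorphic continuation: for $f\in C_c^\infty(\Gamma)$ one has that $\phi V_\pm f_\mp$ equals $c_\pm(f)\phi + (\text{a }W^1_0(\mathbb{R}_+)\text{ function})$, where $c_\pm(f)$ is the residue at $z=0$ of the appropriate continued Mellin symbol and depends linearly and continuously on $f\in L^2(\Gamma)$; this is the quantitative form underlying Lemma~\ref{lem:Vcpct}. Second, compute $c_\pm(f)$ explicitly in terms of $\kappa_\pm$, $\alpha$, and $f_\mp(0)=f(0)$. Third, observe that the remainder operators $R_\pm$ already map into $C^1((0,1])$ with controlled behavior at $0$, so $(R_+-R_-)f$ has a limit at $s=0$; subtract off $[(R_+-R_-)f](0)\,\phi$ to land in $W^1_0$, and note this correction is again continuous in $f$. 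Fourth — the crux — show that the total constant $c_+(f)-c_-(f)+[(R_+-R_-)f](0)$ vanishes for every $f$. This must come from the geometric constraint that $\Gamma$ is a genuine $C^3$ curve through the corner, i.e. the $O(s^2)$-coefficients $\kappa_\pm$ in $z_\pm(s)=e^{i\alpha}s + \tfrac i2\kappa_-e^{i\alpha}s^2+\cdots$, $z_+(s)=s+\tfrac i2\kappa_+s^2+\cdots$ are not independent of the smooth-part normalization — or, more likely, from a direct residue computation showing the $z=0$ residues of the $V_+$ and $R_+$ symbols cancel against those of $V_-$ and $R_-$ after using the identity $f_+(0)=f_-(0)$. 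In practice I expect the cleanest route is to revisit Costabel's symbol $\widehat V_\pm(z)=\mp\frac{(z+i)(\kappa_\mp\cos((\pi-\alpha)z)+\kappa_\pm\cos((\pi-\alpha)(z+1)))}{2\sin(\pi z)}$, expand near $z=0$ (where $\sin(\pi z)\approx \pi z$, $\cos((\pi-\alpha)z)\approx 1$, $\cos((\pi-\alpha)(z+1))\approx \cos(\pi-\alpha)=-\cos\alpha$), extract $\Res_{z=0}$, and check that the combination for $Cf$ telescopes to zero using $\hat f_\mp(z+1)\big|_{z\to 0}$-regularity and the corner matching.

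The main obstacle, clearly, is Step four: establishing the exact cancellation of the constant terms. The bookkeeping is fragile because there are several sources of a $\phi$-term near $s=0$ (the two $V_\pm$ contributions and the two $R_\pm$ contributions), and one needs them to cancel \emph{as a whole} rather than individually. I would handle this either (a) by a residue calculus argument as sketched — tracking the $z=0$ pole of the continued Mellin symbol of the combined operator $f\mapsto \phi(V_+f_--V_-f_+)+(R_+-R_-)f$ and showing the residue is zero — or (b) more robustly, by an indirect argument: apply $K$ to a test function that is constant equal to $1$ near the corner (so $f_\pm\equiv 1$ there), use the classical identity $K1 = 1$ on the interior side / the known value of $K$ on constants (the double layer of a constant), deduce the combined constant must be $0$ for that $f$, and then bootstrap via linearity and the already-established $L^2\to H^1$ bound plus a density argument that it is $0$ for all $f$. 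Option (b) is appealing because it sidesteps the messy residue algebra and leverages a structural identity for $K$; I would present the residue computation as the primary proof and remark that it is consistent with $K1=1$. Once the constant vanishes, boundedness $Cf\in W^1_0(\mathbb{R}_+)$ with the stated estimate follows immediately from Lemma~\ref{lem:Vcpct}, the mapping properties of $R_\pm$, and Lemma~\ref{lem:sobequiv}.
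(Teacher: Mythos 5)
Your reduction is the right one (by Lemma~\ref{lem:Vcpct} and Lemma~\ref{lem:sobequiv} it suffices to show that the bounded functional $f \mapsto Cf(0)$ vanishes), but the crux — the vanishing itself — is not actually established by either of your routes, and both contain concrete problems. In route (a), the obstruction at $s=0$ is \emph{not} a multiple of the corner value $f(0)$: since $V_\pm(0,t) = \mp\kappa_\mp/2\pi$, for an $f$ vanishing near the corner one gets $\phi(V_+f_- - V_-f_+)(0) = -\tfrac{1}{2\pi}\bigl(\kappa_-\int f_- + \kappa_+\int f_+\bigr)$, which is generically nonzero; so the required cancellation is against $(R_+-R_-)f(0)$, and the remainders $R_\pm$ are not Mellin operators and have no explicit symbol, so no residue calculus at $z=0$ (nor any trigonometric identity, nor the matching $f_+(0)=f_-(0)$) can close the bookkeeping. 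Recall also that, as noted in Section~\ref{sec:mellin}, $\mathcal{M}(V_\pm(\cdot,1))(z)$ does not exist, so even the $V$-part cannot be handled by naive symbol manipulations. In route (b), knowing $Cf(0)=0$ for one test function (via $K1=1$) and then invoking ``linearity and density'' is a non sequitur: a nonzero bounded linear functional on $L^2(\Gamma)$ can perfectly well vanish on a single function, or on any non-dense set. To run a density argument you must verify the vanishing on a dense \emph{subspace}, and the hard case is exactly the one your test function does not reach, namely $f$ supported away from the corner, where (by the computation above) the cancellation between the $V$- and $R$-parts is the entire content of the lemma.

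The paper closes this gap by a different structural identity: from \eqref{eq:Kexpansion}, $Cf = (K_+ - K_-)f - \phi K_\alpha(f_- - f_+)$. For $f \in H^1(\Gamma)$, the boundedness of $K \colon C(\Gamma) \to C(\Gamma)$ gives $K_+f(0) = K_-f(0)$, while $f_- - f_+ \in W^1_0(\mathbb{R}_+)$ (the matching condition of Lemma~\ref{lem:sobgamma}) and $\phi K_\alpha$ preserves $W^1_0(\mathbb{R}_+)$ because $\widehat{K}_\alpha$ is bounded on $\mre z = -1/2$; hence $Cf(0)=0$ on the dense subspace $H^1(\Gamma)$, and the conclusion follows from the continuity of $f\mapsto Cf(0)$ on $L^2(\Gamma)$. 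In other words, the global cancellation you are trying to extract term by term is encoded once and for all in the continuity of $Kf$ across the corner; replacing your appeal to $K1=1$ by this fact (and testing on all of $H^1(\Gamma)$ rather than on a single function) is what your argument is missing.
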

\begin{proof}
By Lemma~\ref{lem:Vcpct}, we already know that $C  \colon L^2(\Gamma) \to H^1(\mathbb{R}_+)$ is bounded, and actually $\supp Cf \subset [0,1)$ for every $f \in L^2(\Gamma)$. Thus, by Lemma~\ref{lem:sobequiv}, we only need to check that $Cf(0) = 0$, for arbitrary $f \in L^2(\Gamma)$. 

First of all, it is very well known that $K \colon C(\Gamma) \to C(\Gamma)$ is bounded, see for example \cite{Wendland09}. In particular, $K_+ f(0) = K_- f(0)$ for continuous $f$. Secondly, it is clear that $K_\alpha \colon W^1_0(\mathbb{R}_+) \to W^1_0(\mathbb{R}_+)$ is bounded, since $\widehat{K}_\alpha(z)$ is bounded on the line $\mre z = -1/2$. Therefore $\phi K_\alpha \colon W^1_0(\mathbb{R}_+) \to W^1_0(\mathbb{R}_+)$ is bounded as well. Thus $Cf(0) = 0$ for $f \in H^1(\Gamma)$, since in this case $f_- - f_+ \in W^1_0(\mathbb{R}_+)$, and
$$Cf = (K_+ - K_-)f - \phi K_\alpha (f_- - f_+).$$
However, since $H^1(\Gamma)$ is dense in $L^2(\Gamma)$, it follows that $Cf(0) = 0$ for arbitrary $f \in L^2(\Gamma)$.
\end{proof}

To perform the Mellin analysis of $K_\alpha$, we first have to investigate the function $\widehat{K}_\alpha$. Since $\widehat{K}_{2\pi - \alpha}(z) = -\widehat{K}_{\alpha}(z)$ it is sufficient to consider the case when $0 < \alpha < \pi$ -- unless stated otherwise, we now assume that $0 < \alpha < \pi$ in the rest of this section. Note also that $\widehat{K}_\alpha(-z) = \widehat{K}_\alpha(z)$, that $\widehat{K}_\alpha(\bar{z}) = \overline{\widehat{K}_\alpha(z)}$, and that at $z = 0$,
\begin{equation} \label{eq:Katzero}
\widehat{K}_{\alpha}(z) = 1 - \frac{\alpha}{\pi} + \frac{\alpha(\pi -\alpha)(2\pi -\alpha)}{6\pi}z^2 + O(z^3).
\end{equation}

\begin{definition}
	For $0 \leq \eta \leq 1$, we let
$$\Sigma_{\eta, \alpha} = \clos {\widehat{K}_\alpha((1/2-\eta) + i\mathbb{R})}.$$
For $\eta \neq 1/2$, we denote the interior of $\Sigma_{\eta, \alpha}$ by $\widetilde{\Sigma}_{\eta, \alpha}$.
\end{definition}
Clearly, $\Sigma_{1-\eta, \alpha} = \Sigma_{\eta, \alpha}$.
For $\eta = 1/2$ we obtain a closed interval, traversed twice,
$$\Sigma_{1/2, \alpha} = [0, 1-\alpha/\pi].$$
For $\eta \neq 1/2$, $\Sigma_{\eta, \alpha}$ is a simple closed curve, whose interior $\widetilde{\Sigma}_{\eta, \alpha} \subset \{\mre \lambda > 0\}$ contains $\Sigma_{1/2, \alpha}$, except for the point $0$,
$$(0, 1-\alpha/\pi] \subset \widetilde{\Sigma}_{\eta, \alpha} \subset \{\mre \lambda > 0\}.$$
In fact, the regions $\widetilde{\Sigma}_{\eta, \alpha}$ are increasing in $1/2 < \eta \leq 1$.
Of course, $0 \in \Sigma_{\eta, \alpha}$ for every $0 \leq \eta \leq 1$.  All of these statements, and more, are included in the (proof of the) following lemma.
\begin{lemma} \label{lem:univalent}
	$\widehat{K}_\alpha$ is a biholomorphism considered as a map
	$$\widehat{K}_\alpha \colon \{z \, : \, |\mre z| < 1/2, \; \mim z < 0\} \to \widetilde{\Sigma}_{1, \alpha} \setminus \gamma_{1,\alpha},$$
	where $\gamma_{1,\alpha} = [1-\alpha/\pi, \, \sin((\pi-\alpha)/2)]$,
	as well as a map
	$$\widehat{K}_\alpha \colon \{z \, : \, -1/2 < \mre z < 0\} \to \widetilde{\Sigma}_{1, \alpha} \setminus \gamma_{2,\alpha},$$
	where $\gamma_{2,\alpha} = [0, 1-\alpha/\pi].$
\end{lemma}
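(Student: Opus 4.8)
The plan is to study the conformal-mapping properties of $\widehat{K}_\alpha(z) = \sin((\pi-\alpha)z)/\sin(\pi z)$ directly. Since $\widehat{K}_\alpha$ is even and real on the real and imaginary axes, it maps each of the open quarter-strips $\{|\mre z| < 1/2, \ \mim z < 0\}$, $\{-1/2 < \mre z < 0\}$ onto regions that are reflections of one another, and it suffices to analyze one of them carefully. I would first record the boundary behaviour: on the segment $\{\mre z = 0, \ \mim z < 0\}$ one has $\widehat{K}_\alpha(it) = \sinh((\pi-\alpha)t)/\sinh(\pi t)$, which is real, lies strictly between $0$ and $1-\alpha/\pi$ (decreasing from $1-\alpha/\pi$ at $t = 0$ toward $0$ as $t\to\infty$), so this edge maps onto the open interval $(0, 1-\alpha/\pi)$ — exactly $\gamma_{2,\alpha}$ minus its endpoints. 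On the edges $\{\mre z = \pm 1/2, \ \mim z < 0\}$, using $\sin(\pi(\pm 1/2 + is)) = \pm\cos(\pi i s) = \pm\cosh(\pi s)$ one gets a real-analytic curve; I would compute its endpoint at $\mim z = 0$, namely $\widehat{K}_\alpha(\pm 1/2) = \sin((\pi-\alpha)/2) =: $ the right endpoint of $\gamma_{1,\alpha}$, and its limit as $\mim z\to -\infty$, which tends to $0$. These two edges plus the segment of the real axis between $-1/2$ and $1/2$ (which maps, doubled, onto $[1-\alpha/\pi, \sin((\pi-\alpha)/2)] = \gamma_{1,\alpha}$ near $z=0$ and onto $(0,1-\alpha/\pi)$ further out — this is where I must be careful with the turning point) together bound the quarter-strip.

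Next I would invoke the argument principle / boundary correspondence principle for conformal maps: $\widehat{K}_\alpha$ is holomorphic on the closed quarter-strip except at $\pm 1/2$ where it is still continuous (no pole, since $\alpha \neq \pi$ keeps the zeros of numerator and denominator from colliding there), and it extends across the imaginary-axis edge by the even symmetry and across the real-axis edge by Schwarz reflection $\widehat{K}_\alpha(\bar z) = \overline{\widehat{K}_\alpha(z)}$. One checks that the boundary of the quarter-strip is mapped injectively onto the Jordan curve $\partial(\widetilde\Sigma_{1,\alpha}\setminus\gamma_{2,\alpha})$ — that is, the outer boundary $\Sigma_{1,\alpha}$ traversed once, together with the slit $\gamma_{2,\alpha}=[0,1-\alpha/\pi]$ traversed twice (once along the imaginary edge, once along the far part of the real edge) — and that the orientation is correct. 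The injectivity on the boundary is the one genuinely delicate point: I must verify that $\widehat{K}_\alpha$ does not repeat a value as $z$ runs along $\mre z = 1/2$ downward, which amounts to showing the real curve $s\mapsto \widehat{K}_\alpha(1/2 + is)$ is strictly monotone (or has the right turning behaviour) — a one-variable calculus estimate on $\sin((\pi-\alpha)(1/2+is))/\cosh(\pi s)$. Granting boundary injectivity, the argument principle gives that $\widehat{K}_\alpha$ attains each value in the interior exactly once, hence is a biholomorphism onto $\widetilde\Sigma_{1,\alpha}\setminus\gamma_{2,\alpha}$. For the half-strip $\{-1/2 < \mre z < 0\}$, I would glue together the quarter-strip statement with its reflection across $\mim z = 0$ (using $\widehat{K}_\alpha(\bar z) = \overline{\widehat{K}_\alpha(z)}$): the two quarter-strips map to $\widetilde\Sigma_{1,\alpha}\setminus\gamma_{2,\alpha}$ and its conjugate, the shared edge (the segment $(-1/2,0)$ of the real axis) maps to $\gamma_{2,\alpha}$, and the union is $\widetilde\Sigma_{1,\alpha}\setminus\gamma_{2,\alpha}$ with the slit re-inserted, i.e. $\widetilde\Sigma_{1,\alpha}$ minus the doubled segment — wait, rather one reads off directly that the full half-strip maps biholomorphically onto $\widetilde\Sigma_{1,\alpha}\setminus\gamma_{2,\alpha}$, since $\gamma_{2,\alpha} = [0,1-\alpha/\pi]$ is precisely the image of the central real segment $(-1/2,0)$ and is thus removed when one removes that interior line from the half-strip picture — the simplest route is: the half-strip is the union of the two quarter-strips and their common edge $(-1/2,0)$; apply the quarter-strip result to the lower one, note the upper one is its conjugate, and check the images fit together along $\gamma_{2,\alpha}$ to tile $\widetilde\Sigma_{1,\alpha}\setminus\gamma_{1,\alpha}$.

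The main obstacle, as indicated, is the boundary-injectivity verification on the vertical edges $\mre z = \pm 1/2$ — i.e. ruling out self-intersections of the image curve and pinning down that the only "fold" occurs at the real-axis crossing, which produces exactly the slit $\gamma_{1,\alpha}$ (respectively $\gamma_{2,\alpha}$). I expect this to reduce to showing that $\mim\bigl(\widehat{K}_\alpha(1/2+is)\bigr)$ has constant sign for $s<0$ and that $\mre\bigl(\widehat{K}_\alpha(1/2+is)\bigr)$ is monotone, both of which follow from explicit expressions for real and imaginary parts via $\sin((\pi-\alpha)(1/2+is)) = \sin((\pi-\alpha)/2)\cosh((\pi-\alpha)s) + i\cos((\pi-\alpha)/2)\sinh((\pi-\alpha)s)$ divided by the positive quantity $\cosh(\pi s)$; the resulting inequalities use only $0<\pi-\alpha<\pi$. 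The accumulation claims at $0$ (for $\mim z\to-\infty$ on any edge) and the nesting monotonicity of $\widetilde\Sigma_{\eta,\alpha}$ in $\eta$ come out as byproducts of the same boundary analysis, by comparing $\widehat{K}_\alpha$ on the lines $\mre z = 1/2-\eta$ for different $\eta$.
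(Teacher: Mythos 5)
Your overall strategy is the same as the paper's: explicit monotonicity computations on the three boundary pieces (the imaginary half-axis, the real segment $[0,1/2]$, and the vertical line $\mre z = 1/2$, where one checks $\mre \widehat{K}_\alpha(1/2+is)$ is strictly monotone and $\mim \widehat{K}_\alpha(1/2+is)$ has fixed sign), followed by a boundary-correspondence/argument-principle step and the symmetries $\widehat{K}_\alpha(-z)=\widehat{K}_\alpha(z)$, $\widehat{K}_\alpha(\bar z)=\overline{\widehat{K}_\alpha(z)}$. However, your boundary bookkeeping is wrong in a way that breaks the argument as written. On the real segment $(-1/2,1/2)$ the function is even and strictly increasing on $[0,1/2]$, so its image is $[1-\alpha/\pi,\sin((\pi-\alpha)/2))$ covered twice; it never takes values in $(0,1-\alpha/\pi)$, so there is no ``turning point further out'' carrying it onto $(0,1-\alpha/\pi)$. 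That interval is the image of the imaginary half-axis, which lies in the \emph{interior} of the half-strip $\{|\mre z|<1/2,\ \mim z<0\}$, not on its boundary. Consequently the slit removed for that half-strip is $\gamma_{1,\alpha}$ (the doubled image of the bottom edge), not $\gamma_{2,\alpha}$ as you conclude, and your final gluing sentence interchanges the slits as well: for the strip $\{-1/2<\mre z<0\}$ the two half-images are joined across $(1-\alpha/\pi,\sin((\pi-\alpha)/2))\subset\gamma_{1,\alpha}$, the image of the shared real edge, and the result is $\widetilde{\Sigma}_{1,\alpha}\setminus\gamma_{2,\alpha}$.

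Relatedly, the ``boundary injectivity'' you plan to verify cannot hold for the half-strip: by evenness the bottom edge is traversed two-to-one, so the boundary of $\{|\mre z|<1/2,\ \mim z<0\}$ is not mapped injectively, and a direct argument-principle count must then handle the doubled slit and the point at infinity. The clean route (the paper's) is to run the boundary-correspondence argument on the quarter-strip $\{0<\mre z<1/2,\ \mim z<0\}$, whose boundary on the Riemann sphere is mapped homeomorphically onto the genuine Jordan curve $[0,\sin((\pi-\alpha)/2)]\cup\upsilon$ with $\upsilon(t)=\widehat{K}_\alpha(1/2-it)$; the converse of Carath\'eodory's theorem (together with checking one interior point via the expansion of $\widehat{K}_\alpha$ at $0$) gives a biholomorphism onto $\widetilde{\Sigma}_{1,\alpha}\cap\{\mim z<0\}$, and both statements of the lemma then follow by reflecting with the two symmetries. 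Your calculus reductions are exactly the estimates needed for this, so the repair is a matter of redoing the boundary identification on the correct fundamental domain, not of new inequalities.
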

\begin{remark}
	Figure~\ref{fig:Sigma} illustrates several aspects of the following proof.
\end{remark}
\begin{proof}
First we show that $\widehat{K}_\alpha$ is a homeomorphism as a map
$$\widehat{K}_\alpha \colon \{iy \, : \, -\infty \leq y \leq 0\} \to [0, 1 - \alpha/\pi].$$
Indeed, 
$$\widehat{K}_\alpha (iy) = \frac{\sinh((\pi-\alpha)y)}{\sinh(\pi y)}$$
is a strictly increasing function for $-\infty \leq y \leq 0$, sending $-\infty$ to $0$ and $0$ to $1-\pi/\alpha$. To verify this, note that $\partial_y \widehat{K}_\alpha (iy) > 0$ is equivalent to 
$$\sinh(ut) \cosh(t) < u \sinh(t) \cosh(ut),$$
where $t = \pi y < 0$ and $0 < u = 1-\alpha/\pi < 1$. But this inequality follows easily from the fact that
$$\frac{d}{dt}\left[u \sinh(t) \cosh(ut) - \sinh(ut) \cosh(t)\right] = (u^2 -1)\sinh(ut)\sinh(t).$$

Secondly, a similar calculus argument shows that $\widehat{K}_\alpha$ is a homeomorphism as a map
$$\widehat{K}_\alpha \colon \{0 \leq x \leq 1/2\} \to [1 - \alpha/\pi, \, \sin((\pi-\alpha)/2)].$$
More precisely, $\partial_x \widehat{K}_\alpha(x) > 0$ is equivalent to the inequality
$$\sin(ut) \cos t < u\cos(ut)\sin t,$$
where $u = 1 - \alpha/\pi$ and $t = \pi x > 0$. For $0 < x \leq 1/2$, this follows easily by the fact that
$$\frac{d}{dt} \left[ u\cos(ut)\sin t  - \sin(ut)\cos(t)\right] = (1-u^2) \sin t \sin(ut).$$

Thirdly, we consider the curve $\upsilon$, where $\upsilon(t) = \widehat{K}_\alpha(1/2 - it)$, $0 \leq t \leq \infty$, noting that
$$\upsilon(t) = \sin \left(\frac{\pi - \alpha}{2}\right) \frac{\cosh((\pi - \alpha)t)}{\cosh(\pi t)} - i\cos \left(\frac{\pi - \alpha}{2}\right)\frac{\sinh((\pi - \alpha)t)}{\cosh(\pi t)}.$$
With yet another almost identical calculus argument, we see that $\mre \upsilon(t)$ is strictly decreasing in $t$. Thus $\upsilon$ is a simple curve which lies, except for the endpoints $\sin((\pi-\alpha)/2)$ and $0$, in the quadrant $\{z \, : \, \mre z > 0, \; \mim z < 0\}$.

Combining the three demonstrated facts, we see that $\widehat{K}_\alpha$ is a homeomorphism of Jordan curves $$\widehat{K}_\alpha \colon \partial \{z \, : \, 0 < \mre z < 1/2, \; \mim z < 0\} \to [0, \sin((\pi-\alpha)/2)] \cup \upsilon,$$
where the boundary on the left-hand side is understood on the Riemann sphere.
Note that $\upsilon = \Sigma_{1, \alpha} \cap \{z \, : \, \mim z \leq 0\}$ and thus that the interior of the curve $[0, \sin((\pi-\alpha)/2)] \cup \upsilon$ coincides with $\widetilde{\Sigma}_{1,\alpha} \cap \{z \, : \, \mim z < 0\}$, by the symmetries $\widehat{K}_\alpha(-z) = \widehat{K}_\alpha(z)$ and $\widehat{K}_\alpha(\bar{z}) = \overline{\widehat{K}_\alpha(z)}$. Since points $z = \varepsilon^2 - i\varepsilon$, $\varepsilon > 0$ small, are certainly mapped to the interior, cf. \eqref{eq:Katzero}, the maximum principle shows that 
$$\widehat{K}_\alpha \colon \{z \, : \, 0 < \mre z < 1/2, \; \mim z < 0\} \to \widetilde{\Sigma}_{1,\alpha} \cap \{z \, : \, \mim z < 0\}.$$
Since $\widehat{K}_\alpha$ is a homeomorphism of the Jordan curve boundaries (on the Riemann sphere) of these two domains, it follows by the converse to Carath\'eodory's theorem, cf. \cite[14.5, Exercise 10]{Conway}, that 
$$\widehat{K}_\alpha \colon \{z \, : \, 0 \leq \mre z \leq 1/2, \; \mim z \leq 0\} \to (\clos \widetilde{\Sigma_{1,\alpha}} \cap \{z \, : \mim z \leq 0\}) \setminus \{0\}$$
is a homeomorphism, and a biholomorphism of the interiors. 

The statement of the lemma is now obtained by piecing together the four demonstrated facts, using the symmetries of $\widehat{K}_\alpha$.
\end{proof}
\begin{definition}
For $0 < \alpha < \pi$, we denote the inverse of $$\widehat{K}_\alpha \colon \{z \, : \, |\mre z| < 1/2, \; \mim z < 0\} \to \widetilde{\Sigma}_{1, \alpha} \setminus \gamma_{1,\alpha}$$ by 
$$\mu_\alpha \colon \widetilde{\Sigma}_{1, \alpha} \setminus \gamma_{1,\alpha} \to \{z \, : \, |\mre z| < 1/2, \; \mim z < 0\},$$ where $\gamma_{1,\alpha} = [1-\alpha/\pi, \, \sin((\pi-\alpha)/2)]$.
\end{definition}

Note that $\mu_\alpha$ maps $(0, 1-\alpha/\pi)$ to $i\mathbb{R}_-$ and that $\mre \mu_\alpha(\lambda) > 0$ if $\mim \lambda < 0$, while $\mre \mu_\alpha(\lambda) < 0$ if $\mim \lambda > 0$, by the proof of Lemma~\ref{lem:univalent}. 

To illustrate, when $\alpha = \pi/2$, $\mu_{\pi/2}$ is easily computed, and is given by
$$\mu_{\pi/2}(\lambda) = -\frac{2i}{\lambda} \log\left( \frac{1}{2\lambda} + \sqrt{(2\lambda)^{-2} - 1} \right),$$
and is a conformal biholomorphism 
$$\mu_{\pi/2} \colon \widetilde{\Sigma}_{1, \pi/2} \setminus \gamma_{1, \pi/2} \to \{z \, : \, |\mre z| < 1/2, \; \mim z < 0\},$$ where $\gamma_{1, \pi/2} = [1/2, 1/\sqrt{2}].$ 

We are now ready to investigate $K_\alpha$ as an operator. Again, we may limit the discussion to the case when $0 < \alpha < \pi$.
\begin{lemma} \label{lem:Rinvert}
	$K_\alpha \colon H^{\eta}(\mathbb{R}_+) \to H^{\eta}(\mathbb{R}_+)$ is bounded for $0 \leq \eta \leq 1$. If $\lambda \notin \Sigma_{\eta, \alpha} \cup \Sigma_{1, \alpha}$ and $\lambda \neq \widehat{K}_\alpha(0) = 1 - \alpha/\pi$, then $K_\alpha - \lambda \colon H^\eta(\mathbb{R}_+) \to H^\eta(\mathbb{R}_+)$ is bounded from below:
	$$\|(K_\alpha - \lambda)f\|_{H^\eta(\mathbb{R}_+)} \gtrsim \|f\|_{H^\eta(\mathbb{R}_+)}.$$
\end{lemma}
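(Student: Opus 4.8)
The plan is to use the Mellin transform to diagonalize $K_\alpha$ and then transfer the resulting multiplier bounds back to the Sobolev norms described in Section~\ref{sec:prelim}. Boundedness of $K_\alpha$ on $H^\eta(\mathbb{R}_+)$, $0 \le \eta \le 1$, is the easy part: the Mellin symbol $\widehat{K}_\alpha(z) = \sin((\pi-\alpha)z)/\sin(\pi z)$ is analytic and bounded on each vertical strip $|\mre z| < 1$, so it acts boundedly as a Fourier multiplier on the weighted spaces $W_0^\eta(\mathbb{R}_+)$ (whose norm is $\int_{\mre z = 1/2-\eta}(1+|z|^2)^\eta |\hat f(z)|^2\,|dz|$) and, after tracking the pole of $\hat f$ at $0$, on $L^2(\mathbb{R}_+)$ (line $\mre z = 1/2$). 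For $\eta \neq 1/2$ one then interpolates or simply invokes Lemma~\ref{lem:sobequiv} to pass to $H^\eta(\mathbb{R}_+)$; for $\eta = 1/2$ one uses the two-line description \eqref{eq:12norm}, noting that the symbol is bounded on both $\mre z = 1/2$ and $\mre z = 0$ and that the pole at $0$ is killed by the factor $|z|^2$.

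For the lower bound, the idea is that $K_\alpha - \lambda$ is the Mellin multiplier with symbol $\widehat{K}_\alpha(z) - \lambda$, and the hypotheses on $\lambda$ are precisely designed so that this symbol is bounded away from $0$ on the relevant vertical lines. On the line $\mre z = 1/2 - \eta$ we have $\widehat{K}_\alpha(z) \in \Sigma_{\eta,\alpha}$ by definition, so $\lambda \notin \Sigma_{\eta,\alpha}$ gives $|\widehat{K}_\alpha(z) - \lambda| \ge c > 0$ there (using that $\widehat{K}_\alpha(z) \to 0$ as $|\mim z| \to \infty$, so the range is a compact curve and $\lambda$ stays at positive distance). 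On the line $\mre z = 1/2$ we have $\widehat{K}_\alpha(z) \in \Sigma_{1/2,\alpha} = [0,1-\alpha/\pi]$, which is contained in $\Sigma_{1,\alpha}$ (as a set), so $\lambda \notin \Sigma_{1,\alpha}$ again gives a uniform lower bound; the hypothesis $\lambda \neq 1-\alpha/\pi = \widehat{K}_\alpha(0)$ is needed because the Mellin transform on $\mre z = 0$ enters for $H^{1/2}$ and $\widehat{K}_\alpha(0) = 1-\alpha/\pi$ lies on that line's image. With $|\widehat{K}_\alpha(z) - \lambda|^{-1}$ bounded on each line in play, the inverse multiplier is again a bounded operator on the same Sobolev space (by the same boundedness argument as in the first paragraph, since $1/(\widehat{K}_\alpha - \lambda)$ is still analytic and bounded on a neighborhood of the relevant strip once $\lambda$ avoids $\Sigma_{1,\alpha} \cup \Sigma_{\eta,\alpha} \cup \{1-\alpha/\pi\}$), and this gives $\|f\|_{H^\eta} \lesssim \|(K_\alpha - \lambda)f\|_{H^\eta}$.

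\textbf{The main obstacle} I anticipate is the critical case $\eta = 1/2$, where the norm \eqref{eq:12norm} mixes the line $\mre z = 1/2$ with the line $\mre z = 0$ weighted by $|z|^2/(1+|z|)$, and one must be careful that the multiplier $\widehat{K}_\alpha(z) - \lambda$ and its reciprocal behave well simultaneously on both lines and that the meromorphic-continuation bookkeeping at $z = 0$ (where $\hat f$ has a simple pole for $f \in C_c^\infty([0,\infty))$) is consistent — this is exactly why the extra condition $\lambda \neq \widehat{K}_\alpha(0)$ appears, and it must be used to ensure $1/(\widehat{K}_\alpha - \lambda)$ has no pole on the line $\mre z = 0$. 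A secondary point of care is making sure the inverse multiplier, applied to $(K_\alpha - \lambda)f$, lands back in $H^\eta(\mathbb{R}_+)$ rather than merely in a weighted space: one should check that the residue/continuation structure at $z = 0$ that characterizes membership in $H^\eta(\mathbb{R}_+)$ (via the decomposition $f = f(0)\phi + W_0^\eta$ when $\eta > 1/2$, and via \eqref{eq:12norm} when $\eta = 1/2$) is preserved, which it is because $\widehat{K}_\alpha(z) - \lambda$ is analytic and nonvanishing near $z = 0$ under the stated hypotheses.
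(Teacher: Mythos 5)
Your treatment of boundedness and of the range $0 \le \eta \le 1/2$ is essentially the paper's argument: express the norms through the Mellin transform on the lines $\mre z = 1/2$ and $\mre z = 1/2-\eta$ (respectively \eqref{eq:12norm} when $\eta=1/2$) and use that $|\widehat{K}_\alpha(z)-\lambda|$ is bounded below pointwise on those lines. Two labelling slips there: the closed image of the line $\mre z = 1/2$ is $\Sigma_{0,\alpha}=\Sigma_{1,\alpha}$, not $\Sigma_{1/2,\alpha}$; the latter, $[0,1-\alpha/\pi]$, is the image of $\mre z = 0$ and is \emph{not} contained in the curve $\Sigma_{1,\alpha}$ (it lies in its interior). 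The condition you invoke for that line, $\lambda\notin\Sigma_{1,\alpha}$, is nonetheless the correct one.

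The genuine gap is the case $1/2<\eta\le 1$. Your mechanism for the lower bound is a bounded inverse Mellin multiplier, justified by the claim that $1/(\widehat{K}_\alpha-\lambda)$ is analytic and bounded on a neighborhood of the relevant strip once $\lambda$ avoids $\Sigma_{1,\alpha}\cup\Sigma_{\eta,\alpha}\cup\{1-\alpha/\pi\}$. This is false precisely for the $\lambda$ the lemma is built for: if $\lambda\in\widetilde{\Sigma}_{1,\alpha}$ with $\mim\lambda\ne 0$ (such $\lambda$ avoid both curves, which are only the boundary images of the lines), then by Lemma~\ref{lem:univalent} the equation $\widehat{K}_\alpha(z)=\lambda$ has a root $z=\pm\mu_\alpha(\lambda)$ with $|\mre z|<1/2$, so the reciprocal symbol has a pole strictly inside the strip between $\mre z=1/2-\eta$ and $\mre z=1/2$. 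Accordingly no bounded two-sided inverse exists: the remark after the lemma (via Theorem~\ref{thm:mellinanalysis}) states that $K_\alpha-\lambda$ is \emph{not} invertible on $H^1(\mathbb{R}_+)$ for such $\lambda$, since solutions of $(K_\alpha-\lambda)f=g$ acquire the singular term $\phi(s)s^{-\mu_\alpha(\lambda)}\notin H^1(\mathbb{R}_+)$; only boundedness from below (injectivity with closed range) can hold, so any argument producing a bounded inverse proves too much. For $\eta>1/2$ one needs instead the paper's route: write $H^\eta(\mathbb{R}_+)=\mathbb{C}\phi\oplus\widetilde{H}^\eta$ with $\widetilde{H}^\eta=W_0^\eta(\mathbb{R}_+)\cap L^2(\mathbb{R}_+)$, obtain boundedness below of $K_\alpha-\lambda$ on $\widetilde{H}^\eta$ from the two-line multiplier bound, compute $(K_\alpha-\lambda)\phi=(1-\alpha/\pi-\lambda)\phi+d$ with $d\in\widetilde{H}^\eta$, and use $\lambda\ne 1-\alpha/\pi$ to get a trivial kernel; the range is then the sum of a closed subspace and a one-dimensional space, hence closed, and boundedness from below follows. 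This also corrects your reading of the hypothesis $\lambda\ne\widehat{K}_\alpha(0)$: it is not about excluding a pole of the reciprocal symbol on $\mre z=0$ (for $\eta\ne 1/2$ that line does not enter the norm, and for $\eta=1/2$ the needed condition is the full $\lambda\notin\Sigma_{1/2,\alpha}$, already assumed), but about keeping the $\phi$-component of $(K_\alpha-\lambda)\phi$ nonzero.
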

\begin{proof}
For $0 \leq \eta \leq 1/2$, by Lemma~\ref{lem:sobequiv} and \eqref{eq:12norm}, we have for $f \in C^\infty_c([0,\infty))$ that
$$\|f\|_{H^\eta}^2 \approx \int_{\mre z = 1/2} |\hat{f}(z)|^2 |dz| + \int_{\mre z = 1/2 - \eta} \frac{|z|^{4\eta}}{(1+|z|)^{2\eta}} |\hat{f}(z)|^2 |dz|.$$
As stated earlier, $\mathcal{M}((K_\alpha - \lambda)f)(z) = (\widehat{K}_\alpha(z) - \lambda) \hat{f}(z)$, initially for $0 < \mre z < 1$, and for $-1 < \mre z < 1$, $z \neq 0$, by analytic continuation.  Since $\widehat{K}_\alpha(z)$ is bounded on the lines $\mre z = 1/2$ and $\mre z = 1/2 - \eta$, we see that $K_\alpha$ is bounded on $H^\eta$. Similarly, if $\lambda \notin \Sigma_{\eta, \alpha} \cup \Sigma_{1, \alpha}$, then $K_\alpha - \lambda \colon H^\eta \to H^\eta$ is bounded from below, for $|\widehat{K}_\alpha(z) - \lambda|$ is strictly bounded from below on these lines.

For $\eta > 1/2$, by Lemma~\ref{lem:sobequiv}, any $f \in H^\eta$ may be written $f = f(0)\phi + h$, where $h \in \widetilde{H}^\eta := W_0^\eta \cap L^2$, with
$$\|f\|_{H^\eta}^2 \approx |f(0)|^2 + \|h\|_{\widetilde{H}^\eta}^2 = |f(0)|^2 + \|h\|_{W_0^\eta}^2 + \|h\|_{L^2}^2.$$
The same argument as above yields the boundedness and boundedness from below of $K_\alpha - \lambda \colon \widetilde{H}^\eta \to \widetilde{H}^\eta$, assuming that $\lambda \notin \Sigma_{\eta, \alpha} \cup \Sigma_{1, \alpha}$.
In the notation of Lemma~\ref{lem:mellinsing},
$$\mathcal{M}((K_\alpha - \lambda)\phi)(z) = \frac{\psi(z)(1-\alpha/\pi - \lambda)}{z} + \psi_1(z), \quad  0 < \mre z < 1,$$
where $\psi_1(z)$, like $\psi(z)$, is analytic and rapidly decreasing in the strip $-1 < \mre z < 1$. In particular, by Lemma~\ref{lem:melliniso}, $\psi_1(z) = \hat{d}(z)$, $-1 < \mre z < 1$, for a function $d \in \widetilde{H}^\eta$, $d(t)$ satisfying the appropriate bounds at $t = 0$ and $t = \infty$. Thus, applying Lemmas~\ref{lem:melliniso} and \ref{lem:mellinsing} again,
$$(K_\alpha - \lambda) \phi = (1 - \alpha/\pi - \lambda)\phi + d \in H^\eta.$$
This in particular shows that $K_\alpha$ is bounded on $H^\eta$.

Suppose now that $(K_\alpha - \lambda)(f(0)\phi + h) = 0$, and that $\lambda \neq 1 - \alpha/\pi$, in addition to the condition that $\lambda \notin \Sigma_{\eta, \alpha} \cup \Sigma_{1, \alpha}$. Then we just saw that $(K_\alpha - \lambda)\phi \notin \widetilde{H}^\eta$. Since $(K_\alpha - \lambda)h \in \widetilde{H}^\eta$, it must be that $f(0) = 0$. But we already know that the kernel is trivial on $\widetilde{H}^\eta$ and therefore that $h = 0$ also. Hence $K_\alpha - \lambda$ has trivial kernel on $H^\eta$. Furthermore, $(K_\alpha - \lambda) \colon \widetilde{H}^\eta \to \widetilde{H}^\eta$ being bounded below, $(K_\alpha - \lambda) \widetilde{H}^\eta$ is closed in $\widetilde{H}^\eta$, and thus in $H^\eta$. Therefore $(K_\alpha - \lambda) H^\eta$ is closed as well. Combined with the trivial kernel, this demonstrates that $K_\alpha - \lambda \colon H^\eta \to H^\eta$ is bounded from below.
\end{proof}
\begin{remark}
	 Theorem~\ref{thm:mellinanalysis} shows that $K_\alpha - \lambda \colon H^1(\mathbb{R}_+) \to H^1(\mathbb{R}_+)$ is not invertible for $\lambda \in \widetilde{\Sigma}_{1,\alpha}$ with $\mim \lambda \neq 0$.
\end{remark}
\begin{lemma} \label{lem:Rinvert2}
Let $1/2 \leq \eta \leq 1$. Then $K_\alpha \colon W^\eta(\mathbb{R}_+) \to W^\eta(\mathbb{R}_+)$ is bounded.
If $\lambda \notin \Sigma_{\eta, \alpha}$ and $\lambda \neq \widehat{K}_\alpha(0) = 1 - \alpha/\pi$, then $K_\alpha - \lambda \colon W^\eta(\mathbb{R}_+) \to W^\eta(\mathbb{R}_+)$ is invertible.
\end{lemma}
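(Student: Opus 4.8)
The plan is to pass to the Mellin picture and exploit the splitting $W^\eta(\mathbb{R}_+) = \mathbb{C}\phi \oplus W_0^\eta(\mathbb{R}_+)$ together with the fact, built into the definition of $W_0^\eta(\mathbb{R}_+)$, that the Mellin transform identifies $W_0^\eta(\mathbb{R}_+)$, up to equivalence of norms, with $L^2$ of the line $\mre z = 1/2-\eta$ against the weight $(1+|z|^2)^\eta\,|dz|$. Since $1/2 \leq \eta \leq 1$ we have $-1/2 \leq 1/2-\eta \leq 0$, so this line lies inside the strip $-1 < \mre z < 1$ on which $\widehat{K}_\alpha$ is analytic and (being rapidly decreasing) bounded.

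First I would treat boundedness. After conjugation by $\mathcal{M}$, the operator $K_\alpha$ on $W_0^\eta(\mathbb{R}_+)$ is multiplication by $\widehat{K}_\alpha(z)$ on the line $\mre z = 1/2-\eta$, hence bounded, and in particular $K_\alpha$ preserves $W_0^\eta(\mathbb{R}_+)$. For the one-dimensional summand I would reuse the computation from the proof of Lemma~\ref{lem:Rinvert}: from $\mathcal{M}(K_\alpha\phi)(z) = \widehat{K}_\alpha(z)\psi(z)/z$, with $\psi$ as in Lemma~\ref{lem:mellinsing}, and the expansion $\widehat{K}_\alpha(z) = (1-\alpha/\pi) + O(z^2)$ near $z = 0$ from \eqref{eq:Katzero}, one obtains
$$K_\alpha\phi = (1-\alpha/\pi)\phi + d,$$
where $\hat d(z) = [\widehat{K}_\alpha(z)-(1-\alpha/\pi)]\psi(z)/z$ is analytic and rapidly decreasing in $-1 < \mre z < 1$; so by Lemma~\ref{lem:melliniso}, $d$ is a fixed element of $W_0^\eta(\mathbb{R}_+)$ (independent of $\lambda$). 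That proof is written for $\eta > 1/2$, but all that is used here is the Mellin decay of $\hat d$ on the line $\mre z = 1/2-\eta$, which holds for every $\eta \in [1/2,1]$. These two facts give boundedness of $K_\alpha$ on $W^\eta(\mathbb{R}_+)$ and, relative to $W^\eta(\mathbb{R}_+) = \mathbb{C}\phi \oplus W_0^\eta(\mathbb{R}_+)$, the block lower-triangular form
$$K_\alpha - \lambda = \begin{pmatrix} 1-\alpha/\pi-\lambda & 0 \\ B & (K_\alpha-\lambda)\big|_{W_0^\eta(\mathbb{R}_+)} \end{pmatrix},$$
where $B$ is the bounded map $c\phi \mapsto cd$ and the upper-right block is zero because $K_\alpha$ preserves $W_0^\eta(\mathbb{R}_+)$.

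For invertibility, assume $\lambda \notin \Sigma_{\eta,\alpha}$ and $\lambda \neq \widehat{K}_\alpha(0) = 1-\alpha/\pi$. Then the $(1,1)$-block is multiplication by the nonzero scalar $1-\alpha/\pi-\lambda$, and since $\Sigma_{\eta,\alpha} = \clos\widehat{K}_\alpha((1/2-\eta)+i\mathbb{R})$, the condition $\lambda \notin \Sigma_{\eta,\alpha}$ forces $|\widehat{K}_\alpha(z)-\lambda| \geq \operatorname{dist}(\lambda,\Sigma_{\eta,\alpha}) > 0$ on the line $\mre z = 1/2-\eta$; together with the boundedness of $\widehat{K}_\alpha$ there, $(\widehat{K}_\alpha(z)-\lambda)^{-1}$ is bounded on that line and its Mellin multiplier is a bounded two-sided inverse of $(K_\alpha-\lambda)|_{W_0^\eta(\mathbb{R}_+)}$. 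A block lower-triangular operator matrix with invertible diagonal blocks is invertible — concretely, $(K_\alpha-\lambda)(c\phi + h) = c'\phi + h'$ is solved by $c = (1-\alpha/\pi-\lambda)^{-1}c'$ and $h = \big((K_\alpha-\lambda)|_{W_0^\eta(\mathbb{R}_+)}\big)^{-1}(h' - cd)$, with bounded dependence on $(c',h')$ — so $K_\alpha - \lambda \colon W^\eta(\mathbb{R}_+) \to W^\eta(\mathbb{R}_+)$ is invertible.

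The only genuinely delicate point is the critical index $\eta = 1/2$, where the decomposition of a function into its cut-off part and its $W_0^{1/2}(\mathbb{R}_+)$-part is no longer finite-dimensional in nature. One must check that the cut-off component of $K_\alpha\phi$ is exactly $(1-\alpha/\pi)\phi$ and that the remainder stays in $W_0^{1/2}(\mathbb{R}_+)$; this is precisely what the second-order vanishing $\widehat{K}_\alpha(z) - (1-\alpha/\pi) = O(z^2)$ at $z = 0$ provides, by cancelling the simple pole of $\hat\phi$ at the origin, and it is also the reason $\widehat{K}_\alpha(0) = 1-\alpha/\pi$ has to be excluded. Beyond this bookkeeping the argument is a routine application of the Mellin-multiplier calculus already set up for Lemma~\ref{lem:Rinvert}.
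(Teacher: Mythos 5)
Your proof is correct and follows essentially the same route as the paper: boundedness and invertibility on $W_0^\eta(\mathbb{R}_+)$ via the Mellin multiplier $\widehat{K}_\alpha(z)-\lambda$ on the line $\mre z = 1/2-\eta$, combined with the identity $(K_\alpha-\lambda)\phi = (1-\alpha/\pi-\lambda)\phi + d$, $d \in W_0^\eta(\mathbb{R}_+)$, carried over from the proof of Lemma~\ref{lem:Rinvert}. The only difference is cosmetic: you make explicit the block lower-triangular solve on $\mathbb{C}\phi \oplus W_0^\eta(\mathbb{R}_+)$ that the paper summarizes with ``it easily follows.''
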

\begin{proof}
	As in the proof of Lemma~\ref{lem:Rinvert}, $K_\alpha \colon W^\eta(\mathbb{R}_+) \to W^\eta(\mathbb{R}_+)$ is bounded. When $\lambda \notin \Sigma_{\eta, \alpha}$, it is also clear that  $(K_\alpha - \lambda) \colon W^\eta_0(\mathbb{R}_+) \to W^\eta_0(\mathbb{R}_+)$ is invertible, since $(\widehat{K}_\alpha(z) - \lambda)^{-1}$ is uniformly bounded from below on the line $\mre z = 1/2 - \eta$. Note that $\phi \notin W_0^\eta(\mathbb{R}_+)$, and that, as in the proof of Lemma~\ref{lem:Rinvert},
	$$(K_\alpha - \lambda)\phi = (1-\alpha/\pi - \lambda)\phi + d,$$
	where $d \in W^\eta_0(\mathbb{R}_+)$. It easily follows that $(K_\alpha - \lambda) \colon W^\eta(\mathbb{R}_+) \to W^\eta(\mathbb{R}_+)$ is invertible, if in addition $1-\alpha/\pi - \lambda \neq 0$.
\end{proof}
We now come to the main result of this section.
\begin{theorem} \label{thm:mellinanalysis}
Suppose that $\lambda \in \widetilde{\Sigma}_{1,\alpha}$ with $\mim \lambda \neq 0$. Furthermore, assume that $g \in H^1(\mathbb{R}_+)$, and that $f_\lambda \in H^{1/2}(\mathbb{R}_+)$ solves the equation  
 $$(K_\alpha - \lambda)f_\lambda = g.$$
Then $f_\lambda$ is of the form
$$f_\lambda(s) = c_\lambda \phi(s) s^{-\sgn(\mim \lambda)\mu_\alpha(\lambda)} + \tilde{f}_\lambda,$$
where $c_\lambda \in \mathbb{C}$, and $\tilde{f}_\lambda \in W^1(\mathbb{R}_+)$. Additionally, the solution satisfies the estimate
$$|c_\lambda|^2 + \|\tilde{f}_\lambda\|_{W^1(\mathbb{R}_+)}^2 \leq C_\lambda \|g\|_{H^{1}(\mathbb{R}_+)}^2.$$

For $\lambda \notin \clos \widetilde{\Sigma}_{1, \alpha}$, if we consider a solution $j_\lambda \in H^{1/2}(\mathbb{R}_+)$ to
 $$(K_\alpha - \lambda)j_\lambda = g,$$
 then $j_\lambda \in W^1(\mathbb{R}_+)$ and $\|j_\lambda\|_{W^1(\mathbb{R}_+)} \leq C_\lambda \|g\|_{H^{1}(\mathbb{R}_+)}.$
\end{theorem}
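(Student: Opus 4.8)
\emph{Plan of proof.} The plan is to pass to Mellin transforms and extract the singular part of $f_\lambda$ by a residue computation, the equation converting the critical regularity of $f_\lambda$ into the asserted splitting. We keep the standing assumption $0 < \alpha < \pi$. I would first reduce to $\mim\lambda < 0$: if $\mim\lambda > 0$ then, $K_\alpha$ having a real kernel, $\overline{f_\lambda}$ solves $(K_\alpha - \overline\lambda)\overline{f_\lambda} = \overline g$ with $\mim\overline\lambda < 0$ and $\overline\lambda\in\widetilde{\Sigma}_{1,\alpha}$ (which is conjugation-symmetric), and conjugating the conclusion for $\overline\lambda$, using $\mu_\alpha(\overline\lambda) = -\overline{\mu_\alpha(\lambda)}$ and the conjugation-invariance of $W^1(\mathbb{R}_+)$, reproduces the exponent $-\sgn(\mim\lambda)\mu_\alpha(\lambda) = -\mu_\alpha(\lambda)$. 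So assume $\mim\lambda < 0$ and set $\mu = \mu_\alpha(\lambda)$; by Lemma~\ref{lem:univalent} and the remark following the definition of $\mu_\alpha$, $\mu$ lies in the open half-strip $\{0 < \mre z < 1/2,\ \mim z < 0\}$, so $0 < \mre\mu < 1/2$ and the relevant singular function is $\phi(s)s^{\mu}$; since $(\phi s^{\mu})'\sim\mu s^{\mu-1}\notin L^2$ near $0$, we have $\phi s^\mu\notin W^1(\mathbb{R}_+)$, which already gives uniqueness of the decomposition once it is produced. Next I would record the Mellin-side facts: extending \eqref{eq:12norm} from $C_c^\infty([0,\infty))$ to $H^{1/2}(\mathbb{R}_+)$ by density (and using boundedness of $K_\alpha$ on $H^{1/2}(\mathbb{R}_+)$, Lemma~\ref{lem:Rinvert}), $\hat f_\lambda$ is holomorphic in $0 < \mre z < 1/2$ with $\int_{\mre z=1/2}|\hat f_\lambda|^2\,|dz| < \infty$ and $\int_{\mre z=0}\tfrac{|z|^2}{1+|z|}|\hat f_\lambda|^2\,|dz| < \infty$, and the symbol identity $(\widehat{K}_\alpha(z)-\lambda)\hat f_\lambda(z) = \hat g(z)$ holds in this strip (again by density from $C_c^\infty$, where it is recorded in Section~\ref{sec:mellin}). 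Writing $g = g(0)\phi + h$ with $h\in W_0^1(\mathbb{R}_+)\cap L^2(\mathbb{R}_+)$ and using Lemmas~\ref{lem:melliniso} and \ref{lem:mellinsing}, $\hat g$ continues meromorphically to $-1/2 < \mre z < 1/2$ with a single simple pole, at $z = 0$ of residue $g(0)$, and $\int_{\mre z=-1/2}(1+|z|^2)|\hat g|^2\,|dz|\lesssim\|g\|_{H^1(\mathbb{R}_+)}^2$.

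\emph{Residue analysis and estimates.} By Lemma~\ref{lem:univalent} and its proof, the zeros of $\widehat{K}_\alpha - \lambda$ in $\{|\mre z| < 1/2\}$ are exactly the simple zeros at $\mu$ and $-\mu$, none lies on $\mre z = 0$, and $z = 0$ is not a zero ($\widehat{K}_\alpha(0) = 1-\alpha/\pi\in\mathbb{R}$, $\lambda\notin\mathbb{R}$); for $\mim\lambda < 0$, $\mu$ is in the right half-strip and $-\mu$ in the left half-strip with $\mre(-\mu)\in(-1/2,0)$ and $-\mu\neq 0$. Hence $\hat f_\lambda = \hat g/(\widehat{K}_\alpha - \lambda)$ continues $\hat f_\lambda$ meromorphically to $-1/2 < \mre z < 1/2$ with poles only at $z = 0$, of residue $d_\lambda := g(0)/(1-\alpha/\pi-\lambda)$, and at $z = -\mu$, of residue $c_\lambda := \hat g(-\mu)/\widehat{K}_\alpha'(-\mu)$ — the a priori possible pole at $\mu$ being absent, since $\hat f_\lambda$ is already holomorphic in $0 < \mre z < 1/2$. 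By Lemma~\ref{lem:mellinsing}, $\widehat{c_\lambda\phi s^{\mu}}$ has a simple pole at $-\mu$ of residue $c_\lambda$ and $\widehat{d_\lambda\phi}$ one at $0$ of residue $d_\lambda$, so $w := f_\lambda - c_\lambda\phi s^{\mu} - d_\lambda\phi$ has $\hat w$ holomorphic throughout $-1/2 < \mre z < 1/2$; since $\hat f_\lambda = \hat g/(\widehat{K}_\alpha - \lambda)$ on $\mre z = -1/2$ with $\inf_{\mre z=-1/2}|\widehat{K}_\alpha - \lambda|\geq\mathrm{dist}(\lambda,\Sigma_{1,\alpha}) > 0$ (and the subtracted terms are rapidly decreasing on that line), one gets $\int_{\mre z=-1/2}(1+|z|^2)|\hat w|^2\,|dz|\lesssim_\lambda\|g\|_{H^1(\mathbb{R}_+)}^2$. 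Together with $w\in L^2(\mathbb{R}_+)$ this places $w$ in $W_0^1(\mathbb{R}_+)$, so $\tilde f_\lambda := f_\lambda - c_\lambda\phi s^{\mu} = d_\lambda\phi + w\in W^1(\mathbb{R}_+)$. For the quantitative bound: $|d_\lambda|\lesssim_\lambda|g(0)|\lesssim\|g\|_{H^1(\mathbb{R}_+)}$; $|c_\lambda|\lesssim_\lambda|\hat g(-\mu)|\lesssim_\lambda\|g\|_{H^1(\mathbb{R}_+)}$ by Lemma~\ref{lem:mellineval} with $\eta = 1$ applied to $h$ (admissible since $-1/2 < \mre(-\mu) < 0$, the $\phi$-part handled by Lemma~\ref{lem:mellinsing}); and $\|f_\lambda\|_{H^{1/2}(\mathbb{R}_+)}\lesssim_\lambda\|g\|_{H^1(\mathbb{R}_+)}$ by Lemma~\ref{lem:Rinvert} (as $\lambda\notin\Sigma_{1/2,\alpha}\cup\Sigma_{1,\alpha}$ and $\lambda\neq 1-\alpha/\pi$); these combine to control $|c_\lambda|^2 + \|\tilde f_\lambda\|_{W^1(\mathbb{R}_+)}^2$.

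\emph{The case $\lambda\notin\clos\widetilde{\Sigma}_{1,\alpha}$.} The same run works, except that now $\widehat{K}_\alpha(\{|\mre z|\leq 1/2\})\subset\clos\widetilde{\Sigma}_{1,\alpha}$ (Lemma~\ref{lem:univalent}) shows $\widehat{K}_\alpha - \lambda$ has no zeros in $\{|\mre z|\leq 1/2\}$ at all, so $\hat j_\lambda = \hat g/(\widehat{K}_\alpha - \lambda)$ continues to $-1/2 < \mre z < 1/2$ with only the simple pole at $z = 0$; subtracting $d_\lambda\phi$ as above gives $j_\lambda\in W^1(\mathbb{R}_+)$ with $\|j_\lambda\|_{W^1(\mathbb{R}_+)}\lesssim_\lambda\|g\|_{H^1(\mathbb{R}_+)}$.

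\emph{Main obstacle.} The hard part is the critical-index bookkeeping: making rigorous, at $\eta = 1/2$, the dictionary between the Sobolev statements and their Mellin descriptions — that $f_\lambda\in H^{1/2}(\mathbb{R}_+)$ is exactly the holomorphy of $\hat f_\lambda$ in $0 < \mre z < 1/2$ together with finiteness of the two (distinct) boundary-line integrals, that this and the symbol identity survive the passage from $C_c^\infty$ to $H^{1/2}(\mathbb{R}_+)$, and, conversely, that holomorphy in $-1/2 < \mre z < 1/2$ with the stated boundary behaviour identifies a function in $W_0^1(\mathbb{R}_+)$. By contrast the complex analysis — the position and simplicity of the zeros of $\widehat{K}_\alpha - \lambda$ — is already supplied by Lemma~\ref{lem:univalent} and the residue-to-singular-function correspondence by Lemma~\ref{lem:mellinsing}, so the residual effort is confined to the density and boundary-value details.
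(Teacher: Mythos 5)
Your residue analysis itself is sound and coincides with the paper's: the zeros of $\widehat{K}_\alpha-\lambda$ located via Lemma~\ref{lem:univalent}, the cancellation of the zero in the "good'' half-strip forced by holomorphy of $\hat f_\lambda$ there, the subtraction of $d_\lambda\phi$ and $c_\lambda\phi s^{\mu}$ with residues read off from Lemma~\ref{lem:mellinsing}, and the estimates via Lemma~\ref{lem:mellineval} all match the paper's computation (and your treatment of the exterior case and of $\mim\lambda>0$ by conjugation is fine). The genuine gap is exactly at the two "dictionary'' statements you defer to the final paragraph, and one of them is not a valid implication as you state it. You claim that holomorphy of the continued transform in $-1/2<\mre z<1/2$ together with finiteness of $\int_{\mre z=-1/2}(1+|z|^2)|\hat w|^2\,|dz|$ and $w\in L^2(\mathbb{R}_+)$ "places $w$ in $W_0^1(\mathbb{R}_+)$''. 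Boundary-line integrability of a holomorphic function on a strip does not by itself identify that boundary data as the Mellin transform of $w$ in the completion sense, nor control the function in the interior: Phragm\'en--Lindel\"of-type examples (functions of the form $\exp(\exp(\cdot))$ adapted to a strip) are bounded on both boundary lines yet are not Poisson integrals of their boundary values. To make the step correct you must verify a Hardy-class condition, e.g.\ uniformly bounded weighted $L^2$ norms of $\hat g/(\widehat K_\alpha-\lambda)$ minus its pole parts on all intermediate lines $\mre z=\sigma$, $-1/2\le\sigma\le1/2$ (which does hold here, since $\hat g$ inherits such bounds from $g\in H^1$ and the symbol is bounded below off a compact set where the zeros have been cancelled), and then prove a two-line Paley--Wiener statement at the critical index. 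Similarly, the claim that \eqref{eq:12norm} upgrades "by density'' to: $\hat f_\lambda$ is holomorphic in $0<\mre z<1/2$ with coherent boundary data and a symbol identity valid in the open strip, is true but not a mere density statement (holomorphy needs, e.g., $H^{1/2}\subset L^p_{\mathrm{loc}}$ for all $p<\infty$ plus a boundary-coherence argument; recall $\phi(s)s^{-a}$, $0<a<1/2$, has finite integrals on both lines but is not in $H^{1/2}$, so interior holomorphy is doing real work). As written, the heart of the theorem — the critical-index identification — is asserted rather than proved.

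It is worth comparing with how the paper organizes the proof precisely so that no Mellin--Hardy theory of general $H^{1/2}$ functions is ever needed. First, for general $g\in H^1(\mathbb{R}_+)$ it shows that the given $H^{1/2}$ solution actually lies in $W^{1/2}(\mathbb{R}_+)$: by Lemma~\ref{lem:sobcontain}, $g\in W^{1/2}$, so Lemma~\ref{lem:Rinvert2} produces a $W^{1/2}$ solution $k$; both $k$ and $f_\lambda$ lie in the homogeneous space $\mathcal{H}$ with norm $\int_{\mre z=0}\frac{|z|^2}{1+|z|}|\hat f|^2|dz|$, on which $K_\alpha-\lambda$ is bounded below, whence $f_\lambda=k\in W^{1/2}$. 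Second, it approximates $g$ by $g(0)\phi+h_n$ with $h_n\in C_c^\infty(\mathbb{R}_+)$, so that every Mellin transform in sight is rapidly decreasing and the elementary Lemma~\ref{lem:melliniso} supplies the inverse-transform identifications; the decomposition for general $g$ is then obtained by passing to the limit, using uniqueness of the splitting (since $\phi s^{-\mu}\notin W^1$ and $K_\alpha-\lambda$ is invertible on $W^{1/2}$) and the uniform estimate. If you want to keep your more direct route, supply the intermediate-line Hardy bounds and the two-line Paley--Wiener lemma sketched above; otherwise, the cleanest repair is to insert the paper's $W^{1/2}$-reduction and approximation in $g$, after which your residue computation goes through verbatim on the rapidly decreasing class.
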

\begin{remark}
	If $\pi < \alpha < 2\pi$, then the same statement is true upon replacing $\widetilde{\Sigma}_{1,\alpha}$ with $-\widetilde{\Sigma}_{1,2\pi-\alpha}$ and the exponent $-\sgn(\mim \lambda)\mu_\alpha(\lambda)$ with $\sgn(\mim \lambda)\mu_{2\pi - \alpha}(-\lambda)$.
\end{remark}
\begin{proof}
We first assume that $\lambda \in \widetilde{\Sigma}_{1,\alpha}$ and $\mim \lambda > 0$. By (the proof of) Lemma~\ref{lem:univalent}, $z = \mu_\alpha(\lambda)$ is the unique solution to the equation $\widehat{K}_\alpha(z) = \lambda$ satisfying $-1/2 - \varepsilon \leq \mre z < - \mre \mu_\alpha(\lambda)$, for some $\varepsilon > 0$. To obtain the decomposition of $f_\lambda$, suppose first that $g$ is of the form $g = g(0)\phi + h$, where $h \in C_c^\infty(\mathbb{R}_+)$. Let 
$$w_\lambda(s) = \frac{g(0)}{1-\frac{\alpha}{\pi} - \lambda}\phi(s),$$
and let
$$v_\lambda(s) = \underbrace{\frac{d\hat{h}(\mu_\alpha(\lambda))}{\frac{d}{dz}\widehat{K}_\alpha(\mu_\alpha(\lambda))}}_{c_\lambda}\phi(s)s^{-\mu_\alpha(\lambda)},$$
where 
$$d^{-1} = \Res_{z = \mu_{\alpha}(\lambda)} \mathcal{M}(\phi(s)s^{-\mu_{\alpha}(\lambda)})(z) = -\int_0^1 \phi'(s) \, ds > 0.$$
See Lemma~\ref{lem:mellinsing}.
Since 
\begin{equation} \label{eq:mellineq}
(\widehat{K}_\alpha(z) - \lambda) \hat{f}_\lambda(z) = g(0)\hat{\phi} + \hat{h}(z),
\end{equation}
initially for almost every $z$ on the line $\mre z = 0$, we find that $\hat{f}_\lambda$ has a meromorphic extension to the strip $-1/2-\varepsilon < \mre z < -\mre \mu_\alpha(\lambda)$. Furthermore, by Lemmas \ref{lem:melliniso} and \ref{lem:mellinsing}, we see that $\hat{f}_\lambda - \hat{v}_\lambda - \hat{w}_\lambda$ is analytic and rapidly decreasing in this strip, where the individual terms are understood by meromorphic extension. Thus, by Lemma~\ref{lem:melliniso}, there is a function $u_\lambda \in C^\infty(\mathbb{R}_+)$, satisfying the appropriate bounds at $0$ and $\infty$, such that
\begin{equation} \label{eq:mellineq2}
\hat{u}_\lambda(z) = \hat{f}_\lambda(z) - \hat{v}_\lambda(z) - \hat{w}_\lambda(z), \quad -1/2 - \varepsilon < \mre z < \mre -\mu_\alpha(\lambda),
\end{equation}
where, again, we are adding up terms individually understood by continuation.
Then $u_\lambda \in W^1_0$, and by \eqref{eq:mellineq}, Lemma~\ref{lem:mellineval}, and Lemma~\ref{lem:sobequiv}, we obtain 
\begin{equation} \label{eq:ulest}
|c_\lambda|^2 + \|u_\lambda\|_{W_0^1}^2 \lesssim \|g\|_{H^1}^2,
\end{equation}
 with a constant that may depend on $\lambda$.

From \eqref{eq:mellineq2}, we see that $f_\lambda - w_\lambda \in W_0^{1/2}$. Furthermore, $u_\lambda + v_\lambda \in W_0^{1/2}$, and since the Mellin transform is an isomorphism of $W_0^{1/2}$ onto a weighted $L^2$-space on $\mre z = 0$, we conclude that $f_\lambda = v_\lambda + w_\lambda + u_\lambda$. This demonstrates the decomposition of $f_\lambda$ for this particular type of right-hand side $g = g(0)\phi + h$, $h \in C_c^\infty(\mathbb{R}_+)$, with $\tilde{f}_\lambda = w_\lambda + u_\lambda$.

We now treat the case of a general function $g \in H^1$, still assuming that $\mim \lambda > 0$. Then, by Lemma~\ref{lem:sobcontain}, $g \in W^{1/2}$ (and $g \in H^{1/2}$). Thus, by Lemma~\ref{lem:Rinvert2}, there is $k \in W^{1/2}$ such that $(K_\alpha - \lambda)k = g$. The functions $k$, $f_\lambda$, and $g$ are all elements of the space $\mathcal{H}$ defined as the completion of $C_c^\infty([0,\infty))$ in the norm
$$\|f\|^2_{\mathcal{H}} = \int_0^\infty \int_0^\infty \frac{|f(s)-f(t)|^2}{|s-t|^2} \, ds \, dt \approx \int_{\mre z = 0} \frac{|z|^2}{1+|z|} |\hat{f}(z)|^2 \, |dz|.$$
Then $(K_\alpha - \lambda)k = (K_\alpha - \lambda)f_\lambda = g$ remain valid as equations in $\mathcal{H}$. However, $K_\alpha - \lambda \colon \mathcal{H} \to \mathcal{H}$ is bounded from below, since $|\widehat{K}_\alpha(z) - \lambda|$ is bounded from below on $\mre z = 0$. Thus $f_\lambda = k \in W^{1/2}$.

Choose now a sequence $g_n = g(0)\phi + h_n$, $h_n \in C_c^\infty(\mathbb{R}_+)$, such that $g_n \to g$ in $H^1$, and thus in $W^{1/2}$. By Lemma~\ref{lem:Rinvert2}, we may then let $$f_{\lambda, n} = (K - \lambda)^{-1}g_n \in W^{1/2},$$
and $f_{\lambda, n} \to f_\lambda$ in $W^{1/2}$ as $n \to \infty$. The argument that led to the decomposition and the estimate \eqref{eq:ulest} applies equally well if we start with the information that $f_{\lambda, n} \in W^{1/2}$ (rather than $f_{\lambda, n} \in H^{1/2}$). Therefore we may decompose 
$$f_{\lambda, n}(s) = c_{\lambda, n}\phi(s)s^{-\mu_\alpha(\lambda)} + \tilde{f}_{\lambda, n}(s), \quad \tilde{f}_{\lambda, n} \in W^1,$$
with the estimate
$$|c_{\lambda,n}|^2 + \|\tilde{f}_{\lambda, n}\|_{W^1}^2 \lesssim \|g_n\|_{H^{1}}^2.$$
 Furthermore, the decomposition is unique, since $K-\lambda \colon W^{1/2} \to W^{1/2}$ is invertible, and $\phi(s) s^{-\mu_\alpha(\lambda)} \notin W^1$. Thus by applying the analysis with right-hand side $g_n - g_m$, we conclude that
$$|c_{\lambda, n} - c_{\lambda,m}|^2 + \|\tilde{f}_{\lambda, n} - \tilde{f}_{\lambda,m}\|^2_{W^1} \lesssim \|g_n - g_m\|^2_{H^1}.$$
Thus there exist $c_\lambda$ and $\tilde{f}_{\lambda} \in W^1$ such that $c_{\lambda,n} \to c_\lambda$ and $\tilde{f}_{\lambda, n} \to \tilde{f}_\lambda$ in $W^1$. Thus
$$f_\lambda = c_\lambda\phi(s)s^{-\mu_\alpha(\lambda)} + \tilde{f}_\lambda$$
and the same estimate
$$|c_\lambda|^2 + \|\tilde{f}_\lambda\|_{W^1}^2 \lesssim \|g\|_{H^{1}}^2$$
holds. 

The case when $\lambda \in \widetilde{\Sigma}_{1,\alpha}$ and $\mim \lambda < 0$ is the same, except that the solution to $\widehat{K}_\alpha(z) = \lambda$ in the strip $-1/2 < \mre z < 0$ now occurs at $z = -\mu_{\alpha}(\lambda)$. 

The case when $\lambda \notin \clos \widetilde{\Sigma}_{1,\alpha}$ is also treated analogously, except that in this case $|\widehat{K}_\alpha(z) - \lambda|$ is uniformly bounded from below on a strip $-1/2 - \varepsilon < \mre z < 1/2 + \varepsilon$.
\end{proof}
\section{The NP operator on $\Gamma$} \label{sec:NPGamma}
We now return to the study of $K \colon H^{1/2}(\Gamma) \to H^{1/2}(\Gamma)$. For simplicity we still assume that $0 < \alpha < \pi$, but will indicate the changes that need to be made for $\pi < \alpha < 2\pi$ along the way. Recall that the $\mathcal{E}'$-norm is an equivalent Hilbert space norm on $H^{1/2}(\Gamma)$ in which $K$ is self-adjoint. Thus $K - \lambda \colon H^{1/2}(\Gamma) \to H^{1/2}(\Gamma)$ is invertible whenever $\mim \lambda \neq 0$.
\begin{theorem} \label{thm:Gammaanalysis}
Suppose that $\lambda \in \widetilde{\Sigma}_{1, \alpha}$ with $\mim \lambda \neq 0$, $g \in H^{1}(\Gamma)$ and that $f_\lambda \in H^{1/2}(\Gamma)$ is the solution to
$$(K-\lambda) f_\lambda = g.$$ 
Then there is $c_\lambda \in \mathbb{C}$ and functions $h_{\lambda, \pm} \in H^{1}(\mathbb{R}_+)$ supported in $[0,1)$ such that
$$f_{\lambda, \pm}(s) := (f_\lambda)_{\pm}(s) = c_\lambda\phi(s) s^{-\sgn(\mim \lambda)\mu_\alpha(\lambda)} + h_{\lambda, \pm}(s), \quad s > 0.$$
Furthermore, $f_{\lambda, c} := (f_\lambda)_c \in W_0^{1}(\mathbb{R}_+)$.
We have the estimate
$$
|c_\lambda|^2 + \|h_{\lambda, +} + h_{\lambda, -}\|^2_{H^{1}(\mathbb{R}_+)} + \|h_{\lambda, +} - h_{\lambda, -}\|^2_{W_0^{1}(\mathbb{R}_+)} + \|f_{\lambda,c}\|_{W_0^{1}(\mathbb{R}_+)}^2 \leq C_{\lambda}\|g\|_{H^{1}(\Gamma)}^2. $$

If instead $\lambda \in -\widetilde{\Sigma}_{1, \alpha}$ with $\mim \lambda \neq 0$, then the the same statement holds with
$$f_{\lambda, \pm} = \pm c_\lambda\phi(s) s^{\sgn(\mim \lambda)\mu_\alpha(-\lambda)} + h_{\lambda, \pm}(s).$$
\end{theorem}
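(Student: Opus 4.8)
The plan is to localize at the corner via \eqref{eq:Kexpansion} and to decouple the $\pm$-components of $(K-\lambda)f_\lambda=g$ into two scalar Mellin equations, to which Theorem~\ref{thm:mellinanalysis} applies. Since $K$ is self-adjoint in the $\mathcal{E}'$-norm, $f_\lambda=(K-\lambda)^{-1}g$ exists with $\|f_\lambda\|_{H^{1/2}(\Gamma)}\lesssim_\lambda\|g\|_{H^{1/2}(\Gamma)}\le\|g\|_{H^1(\Gamma)}$, hence also $\|f_\lambda\|_{L^2(\Gamma)}\lesssim_\lambda\|g\|_{H^1(\Gamma)}$ by Sobolev embedding. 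By Lemma~\ref{lem:sobgamma}, $u:=f_{\lambda,+}+f_{\lambda,-}\in H^{1/2}(\mathbb{R}_+)$, $v:=f_{\lambda,+}-f_{\lambda,-}\in W_0^{1/2}(\mathbb{R}_+)$ and $f_{\lambda,c}\in W_0^{1/2}(\mathbb{R}_+)$, all supported in $[0,1/2)$ since $\chi$ is. The component $f_{\lambda,c}$ is disposed of at once: $K_cf_\lambda-\lambda f_{\lambda,c}=g_c$, with $K_cf_\lambda\in C_c^1((0,1])\subset W_0^1(\mathbb{R}_+)$ and $g_c\in W_0^1(\mathbb{R}_+)$ by Lemma~\ref{lem:sobgamma}, so $f_{\lambda,c}\in W_0^1(\mathbb{R}_+)$ with the required bound. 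Taking $\pm$-components of $(K-\lambda)f_\lambda=g$, then adding and subtracting, and inserting \eqref{eq:Kexpansion} (with $Z_\pm$, $C$ as in Lemmas~\ref{lem:Vcpct} and \ref{lem:remaindercont}) together with the rewriting $\phi K_\alpha=K_\alpha-(1-\phi)K_\alpha$, one obtains
\begin{align*}
(K_\alpha-\lambda)u &= (g_++g_-)-(Z_++Z_-)f_\lambda-(R_++R_-)f_\lambda+(1-\phi)K_\alpha u =: G_1, \\
(-K_\alpha-\lambda)v &= (g_+-g_-)-Cf_\lambda+(1-\phi)K_\alpha v =: G_2.
\end{align*}

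The next step is to check that $G_1,G_2\in H^1(\mathbb{R}_+)$, in fact $G_2\in W_0^1(\mathbb{R}_+)$, with norms $\lesssim_\lambda\|g\|_{H^1(\Gamma)}$. The terms $g_++g_-$, $(Z_++Z_-)f_\lambda$, $(R_++R_-)f_\lambda$ lie in $H^1(\mathbb{R}_+)$ by Lemmas~\ref{lem:sobgamma}, \ref{lem:Vcpct} and the mapping properties of $R_\pm$ from \eqref{eq:Kexpansion}, while $g_+-g_-$ and $Cf_\lambda$ lie in $W_0^1(\mathbb{R}_+)$ by Lemmas~\ref{lem:sobgamma} and \ref{lem:remaindercont}; all are controlled by $\|f_\lambda\|_{L^2(\Gamma)}+\|g\|_{H^1(\Gamma)}$. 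The only term needing a genuine argument is the cut-off mismatch $(1-\phi)K_\alpha u$ (and $(1-\phi)K_\alpha v$): because $\alpha\neq0,\pi$, the kernel $K_\alpha(s,t)$, homogeneous of degree $-1$, is $C^\infty$ on $(0,\infty)^2$, so $K_\alpha$ applied to a function supported in $[0,1/2)$ produces a function that is smooth on $(0,\infty)$ and, together with its derivative, decays like $s^{-1}$; multiplying by $1-\phi$, which vanishes near $0$, yields an element of $H^1(\mathbb{R}_+)$, and of $W_0^1(\mathbb{R}_+)$ in the $v$-case, with norm $\lesssim\|u\|_{L^2(\mathbb{R}_+)}$ resp.\ $\lesssim\|v\|_{L^2(\mathbb{R}_+)}$. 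This is the main (essentially the only new) estimate of the proof.

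With $G_1,G_2$ so controlled, suppose first $\lambda\in\widetilde\Sigma_{1,\alpha}$, $\mim\lambda\neq0$. Applying Theorem~\ref{thm:mellinanalysis} to $(K_\alpha-\lambda)u=G_1$, $u\in H^{1/2}(\mathbb{R}_+)$, gives $u=c\,\phi(s)s^{-\sgn(\mim\lambda)\mu_\alpha(\lambda)}+\tilde u$ with $\tilde u\in W^1(\mathbb{R}_+)\subset H^1(\mathbb{R}_+)$ (Lemma~\ref{lem:sobequiv}) and $|c|^2+\|\tilde u\|_{W^1(\mathbb{R}_+)}^2\lesssim_\lambda\|G_1\|_{H^1(\mathbb{R}_+)}^2$. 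Since $\mre\lambda>0$, the operator $-K_\alpha-\lambda$ is invertible on both $W_0^{1/2}(\mathbb{R}_+)$ and $W_0^1(\mathbb{R}_+)$ by Lemma~\ref{lem:Rinvert2}, so from $G_2\in W_0^1(\mathbb{R}_+)$, $v\in W_0^{1/2}(\mathbb{R}_+)$ and uniqueness we get $v\in W_0^1(\mathbb{R}_+)$ with the corresponding bound. Then $f_{\lambda,\pm}=\tfrac12(u\pm v)$ has exactly the claimed form, with $c_\lambda:=c/2$ and $h_{\lambda,\pm}:=\tfrac12(\tilde u\pm v)\in H^1(\mathbb{R}_+)$ supported in $[0,1)$, and moreover $h_{\lambda,+}+h_{\lambda,-}=\tilde u\in H^1(\mathbb{R}_+)$ and $h_{\lambda,+}-h_{\lambda,-}=v\in W_0^1(\mathbb{R}_+)$; collecting the bounds yields the stated estimate.

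For $\lambda\in-\widetilde\Sigma_{1,\alpha}$ the two combinations swap roles. Now $\mre\lambda<0$, so $\lambda\notin\clos\widetilde\Sigma_{1,\alpha}$ and the second part of Theorem~\ref{thm:mellinanalysis} gives $u\in W^1(\mathbb{R}_+)\subset H^1(\mathbb{R}_+)$ directly, while the difference equation, rewritten as $(K_\alpha-(-\lambda))v=-G_2$ with $-\lambda\in\widetilde\Sigma_{1,\alpha}$, $\mim(-\lambda)\neq0$, and $v\in W_0^{1/2}(\mathbb{R}_+)\subset H^{1/2}(\mathbb{R}_+)$, yields by the first part of Theorem~\ref{thm:mellinanalysis} (spectral parameter $-\lambda$, using $\sgn(\mim(-\lambda))=-\sgn(\mim\lambda)$) that $v=c\,\phi(s)s^{\sgn(\mim\lambda)\mu_\alpha(-\lambda)}+\tilde v$ with $\tilde v\in W^1(\mathbb{R}_+)$; since the exponent $\sgn(\mim\lambda)\mu_\alpha(-\lambda)$ has positive real part strictly below $1/2$, the function $\phi(s)s^{\sgn(\mim\lambda)\mu_\alpha(-\lambda)}$ lies in $W_0^{1/2}(\mathbb{R}_+)$ but not $W_0^1(\mathbb{R}_+)$, whence $\tilde v\in W^1(\mathbb{R}_+)\cap W_0^{1/2}(\mathbb{R}_+)=W_0^1(\mathbb{R}_+)$. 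Then $f_{\lambda,\pm}=\tfrac12(u\pm v)=\pm c_\lambda\,\phi(s)s^{\sgn(\mim\lambda)\mu_\alpha(-\lambda)}+h_{\lambda,\pm}$ with $c_\lambda:=c/2$ and $h_{\lambda,\pm}:=\tfrac12(u\pm\tilde v)$, settling the remaining case with the same regularity and estimates as before. The chief obstacle throughout is the cut-off mismatch estimate of the second paragraph; the rest is bookkeeping assembling Theorem~\ref{thm:mellinanalysis}, Lemmas~\ref{lem:Vcpct}--\ref{lem:remaindercont}, the decomposition Lemma~\ref{lem:sobgamma}, and the critical-index inclusion $W_0^{1/2}(\mathbb{R}_+)\hookrightarrow H^{1/2}(\mathbb{R}_+)$ for functions supported near the corner.
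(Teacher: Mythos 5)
Your proposal is correct and takes essentially the same route as the paper: localize with \eqref{eq:Kexpansion}, decouple into the sum and difference $f_{\lambda,+}\pm f_{\lambda,-}$, control the cut-off mismatch $(1-\phi)K_\alpha$ by the $1/s$, $1/s^2$ kernel decay (this is exactly the paper's estimate for $W=(1-\phi)K_\alpha\colon L^2([0,1])\to H^1(\mathbb{R}_+)$), apply Theorem~\ref{thm:mellinanalysis} to the component carrying the singular term, kill the $\phi$-component using membership in $W_0^{1/2}(\mathbb{R}_+)$, and treat $f_{\lambda,c}$ via $f_{\lambda,c}=\lambda^{-1}(K_cf_\lambda-g_c)$; tracking constants instead of invoking the closed graph theorem for the final estimate is an equivalent finish. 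The only deviation is your handling of the difference equation for $\lambda\in\widetilde\Sigma_{1,\alpha}$ via Lemma~\ref{lem:Rinvert2} on $W_0^{1/2}$ and $W_0^1$ "and uniqueness": this tacitly identifies inverses given by the Mellin multiplier $(\widehat K_\alpha(z)+\lambda)^{-1}$ on the two different lines $\mre z=0$ and $\mre z=-1/2$, which does hold here (the multiplier is analytic and bounded on the intermediate strip since $-\lambda\notin\clos\widetilde\Sigma_{1,\alpha}$, or one can run the $\mathcal H$-argument from the proof of Theorem~\ref{thm:mellinanalysis}), but it deserves a line; the paper sidesteps it by simply citing the second part of Theorem~\ref{thm:mellinanalysis} with parameter $-\lambda$ and then removing the $\phi$-component.
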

\begin{remark}
For $\pi < \alpha < 2\pi$, the same statement is true with $-\mu_{2\pi - \alpha}(-\lambda)$ in place of $\mu_{\alpha}(\lambda)$.
\end{remark}
\begin{proof}
Suppose that $\lambda \in \widetilde{\Sigma}_{1, \alpha}$ with $\mim \lambda \neq 0$. By the expansions \eqref{eq:Kexpansion} of $K_\pm$, Lemma \ref{lem:Vcpct} and the equation $(K - \lambda)f_\lambda = g$, we see that $\phi K_\alpha f_{\lambda, \mp} - \lambda f_{\lambda, \pm} \in H^{1}(\mathbb{R}_+)$. Furthermore, as an operator acting on $L^2([0,1])$, $W = (1-\phi)K_\alpha$ has a kernel $W(s,t)$, $W \in C^1([0, \infty) \times [0,1])$ such that $$|W(s,t)| \lesssim 1/s, \quad |\partial_s W(s,t)| \lesssim 1/s^2, \quad s \geq 1.$$
It follows that $W \colon L^2([0,1]) \to H^1(\mathbb{R}_+)$ is bounded. Hence
$$K_\alpha f_{\mp} - \lambda f_{\lambda, \pm} \in H^{1}(\mathbb{R}_+).$$
Therefore,
$$(K_\alpha - \lambda)(f_{\lambda, -} + f_{\lambda, +}) \in H^1(\mathbb{R}_+),$$
and
$$(K_\alpha + \lambda)(f_{\lambda, -} - f_{\lambda, +}) \in H^1(\mathbb{R}_+).$$
By Theorem~\ref{thm:mellinanalysis}, there are $c_\lambda \in \mathbb{C}$ and $j_1, j_2 \in W^1(\mathbb{R}_+)$ such that 
$$(f_{\lambda, -} + f_{\lambda, +})(s) = 2c_\lambda\phi(s)s^{-\sgn(\mim \lambda)\mu_\alpha(\lambda)} + j_1(s), \quad (f_{\lambda, -} - f_{\lambda, +})(s) = j_2(s).$$
Then we obtain the desired decomposition by letting 
$$h_{\lambda, +} = (j_1 - j_2)/2, \quad h_{\lambda, -} = (j_1 + j_2)/2.$$
Clearly, $h_{\lambda, \pm} \in H^{1}(\mathbb{R}_+)$, since both $j_1$ and $j_2$ belong to $W^1(\mathbb{R}_+)$ and are supported in $[0,1)$. Furthermore, we must have $h_{\lambda, +} - h_{\lambda, -} = f_{\lambda, +} - f_{\lambda, -} \in W^1_0(\mathbb{R}_+)$, since $f_{\lambda, +} - f_{\lambda, -} \in W^{1/2}_0(\mathbb{R}_+)$. Finally, to see that $f_{\lambda,c} \in W_0^1(\mathbb{R}_+)$, we only have to observe that $f_{\lambda,c} = \lambda^{-1}(K_c f_\lambda - g_c)$, and that $K_c$ maps $H^{1/2}(\Gamma)$ into $W_0^1(\mathbb{R}_+)$. The existence of the estimate now follows from the closed graph theorem, since we know that $(K-\lambda)^{-1} \colon H^{1}(\Gamma) \to H^{1/2}(\Gamma)$ is bounded.

The case when $\lambda \in -\widetilde{\Sigma}_{1,\alpha}$ follows along the same lines.
\end{proof}

The following result on the Fredholm property of $K-\lambda \colon H^{\eta}(\Gamma) \to H^{\eta}(\Gamma)$ is standard, but a proof is included in lieu of an exact reference.
\begin{theorem} \label{thm:fredholm}
Let $1/2 < \eta \leq 1$, and suppose that $\lambda \notin -\Sigma_{\eta, \alpha} \cup \Sigma_{\eta, \alpha}$ and $\lambda \neq \pm(1 - \alpha/\pi)$. Then $K-\lambda \colon H^{\eta}(\Gamma) \to H^{\eta}(\Gamma)$ is Fredholm.
\end{theorem}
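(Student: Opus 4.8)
The plan is to reduce the global Fredholm property on $\Gamma$ to the invertibility results for the model Mellin operator $K_\alpha$ established in Lemmas~\ref{lem:Rinvert} and~\ref{lem:Rinvert2}, together with compactness of all the error terms appearing in the expansion~\eqref{eq:Kexpansion}. First I would work with the decomposition $f \leftrightarrow (f_+,f_-,f_c)$ of Subsection~\ref{subsec:desc} and the norm from Lemma~\ref{lem:sobgamma}, so that $H^\eta(\Gamma)$ is realised (up to equivalence of norms) as the closed subspace of $H^\eta(\mathbb{R}_+)\oplus W_0^\eta(\mathbb{R}_+)\oplus W_0^\eta(\mathbb{R}_+)$ consisting of triples $(u,v,w)$ with $u=f_++f_-$, $v=f_+-f_-$, $w=f_c$. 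The point of passing to the symmetric/antisymmetric combinations $u,v$ is exactly that, by~\eqref{eq:Kexpansion}, $K_\pm$ acts on $f_\mp$ through $\phi K_\alpha$ plus lower-order terms, so in the $(u,v,w)$ coordinates the principal part of $K$ becomes block-diagonal: $u\mapsto \phi K_\alpha u$, $v\mapsto -\phi K_\alpha v$, and $w\mapsto 0$, precisely the ``plus higher order terms'' picture from Section~\ref{sec:mellin}.

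Next I would assemble a parametrix. For the $u$-block, $K_\alpha-\lambda$ is bounded on $H^\eta(\mathbb{R}_+)$ and, since $\lambda\notin\Sigma_{\eta,\alpha}\cup\Sigma_{1,\alpha}$ and $\lambda\ne 1-\alpha/\pi$ (note $\Sigma_{1,\alpha}\supset\gamma_{1,\alpha}$, and one checks $\lambda\notin\Sigma_{1,\alpha}$ follows from $\lambda\notin\Sigma_{\eta,\alpha}$ since $\widetilde\Sigma_{\eta,\alpha}$ increases to $\widetilde\Sigma_{1,\alpha}$, or else argue directly), Lemma~\ref{lem:Rinvert} gives that $K_\alpha-\lambda$ is bounded below; the same holds for $-(K_\alpha+\lambda)$ using $-\lambda\notin\Sigma_{\eta,\alpha}\cup\Sigma_{1,\alpha}$ and $-\lambda\ne 1-\alpha/\pi$ (here $\lambda\ne\pm(1-\alpha/\pi)$ is used). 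On $W_0^\eta(\mathbb{R}_+)$ one uses Lemma~\ref{lem:Rinvert2}, which actually gives invertibility of $K_\alpha-\lambda$. The issue is that ``bounded below'' is not quite invertibility on $H^\eta(\mathbb{R}_+)$, and more importantly the multiplication by the cut-off $\phi$ and the passage between $H^\eta(\mathbb{R}_+)$ and $W_0^\eta(\mathbb{R}_+)$ (where $\eta>1/2$, so by Lemma~\ref{lem:sobequiv} these differ only by the one-dimensional span of $\phi$) must be handled. I would exploit that $1-\phi$ is compactly supported in $(0,\infty)$: multiplication by $1-\phi$, followed by the resolvent on a region away from the corner, is compact from $H^\eta$ into $H^{\eta}$ by Rellich, because away from the corner $\Gamma$ is $C^3$ and $K$ is smoothing there (cf.\ the continuity of $K_c\colon L^2(\Gamma)\to C_c^1$ and of $R_\pm$). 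Thus modulo compact operators one may replace $K_\alpha-\lambda$ by $\phi(K_\alpha-\lambda)$ and build a right/left parametrix $P$ for each block out of $(K_\alpha-\lambda)^{-1}$ (or a left/right inverse coming from the lower bound) pre- and post-composed with cut-offs, arranging $(K-\lambda)P = I + (\text{compact})$ and $P(K-\lambda) = I + (\text{compact})$.

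Then I would show every remaining term is compact on $H^\eta(\Gamma)$. The operators $Z_\pm=\phi V_\pm(\cdot)_\mp$ map $L^2(\Gamma)\to H^1(\mathbb{R}_+)$ by Lemma~\ref{lem:Vcpct}, and $R_\pm\colon L^2(\Gamma)\to C^1((0,1])$; since $\eta<1$, the inclusion of these target spaces into $H^\eta$ (or $W_0^\eta$, using that the supports are in $[0,1)$ and, where needed, Lemma~\ref{lem:remaindercont} to get vanishing at $0$) is compact on bounded sets, and $H^\eta(\Gamma)\hookrightarrow L^2(\Gamma)$ is bounded; so these are compact $H^\eta(\Gamma)\to H^\eta(\Gamma)$. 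The term $W=(1-\phi)K_\alpha\colon L^2([0,1])\to H^1(\mathbb{R}_+)$ (as in the proof of Theorem~\ref{thm:Gammaanalysis}) and $K_c\colon H^{1/2}(\Gamma)\to W_0^1(\mathbb{R}_+)$ are similarly compact into $H^\eta$. Also the off-diagonal coupling created by the cut-offs and by the fact that $(f_+,f_-)\mapsto(u,v)$ does not exactly intertwine $\phi$-multiplication contributes only commutators $[\phi,K_\alpha]$ and the like, which are smoothing (their kernels are $C^\infty$, supported appropriately) and hence compact on $H^\eta(\mathbb{R}_+)$ for $\eta<1$. Having $(K-\lambda)P - I$ and $P(K-\lambda)-I$ compact, $K-\lambda$ is Fredholm by the standard parametrix criterion.

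The main obstacle is bookkeeping at the corner: making the block-diagonalisation in the $(u,v,w)$ variables precise while keeping rigorous track of which error operators land in which space ($H^\eta(\mathbb{R}_+)$ versus $W_0^\eta(\mathbb{R}_+)$, and in particular the vanishing-at-$0$ condition that distinguishes them when $\eta>1/2$), and checking that the commutators with the cut-off $\phi$ and the matching condition at the corner do not destroy the parametrix structure. The analytic inputs (Lemmas~\ref{lem:Rinvert}, \ref{lem:Rinvert2}, \ref{lem:Vcpct}, \ref{lem:remaindercont}, \ref{lem:sobequiv}, \ref{lem:sobgamma}) are all already available; the work is purely in organising them into a two-sided parametrix and invoking Rellich's theorem to dispose of the lower-order terms, which is legitimate because $\eta<1$ makes every ``gain of one derivative'' into a compact inclusion.
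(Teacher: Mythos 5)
Your proposal follows essentially the same route as the paper's proof: block-diagonalise $K$ near the corner in the symmetric/antisymmetric variables $f_+\pm f_-$ (so that the principal part is $\phi K_\alpha$ on the symmetric block and $-\phi K_\alpha$ on the antisymmetric one), invert the model operators $K_\alpha \mp \lambda$ there, use $-1/\lambda$ away from the corner, and absorb all remaining terms ($V_\pm$, $R_\pm$, $K_c$, cut-off commutators) as compact errors via Lemmas~\ref{lem:Vcpct}, \ref{lem:remaindercont} and the compact embedding of $H^\eta(\Gamma)$ into $L^2(\Gamma)$. One small correction: your parenthetical claim that $\lambda \notin \Sigma_{\eta,\alpha}$ forces $\lambda \notin \Sigma_{1,\alpha}$ is backwards (the regions $\widetilde{\Sigma}_{\eta,\alpha}$ increase with $\eta$, so points of $\Sigma_{1,\alpha}$ typically lie outside $\Sigma_{\eta,\alpha}$ when $\eta<1$), but this detour through Lemma~\ref{lem:Rinvert} is unnecessary: as in the paper, the parametrix only needs the $W^{\eta}(\mathbb{R}_+)$-invertibility of $K_\alpha \mp \lambda$ from Lemma~\ref{lem:Rinvert2}, for which the stated hypotheses $\lambda \notin -\Sigma_{\eta,\alpha}\cup\Sigma_{\eta,\alpha}$ and $\lambda \neq \pm(1-\alpha/\pi)$ suffice.
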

\begin{proof}
Let $\phi_1 \in C^\infty([0,\infty))$ be a cut-off function around $0$ with sufficiently small support. Define $W \colon H^{\eta}(\Gamma) \to H^{\eta}(\Gamma)$ by
$$W_\pm f = (Wf)_{\pm} = \phi_1 K_\alpha \phi_1 f_\mp, \quad W_c f = (Wf)_c \equiv 0.$$
Note that the continuity of $Wf$ at the corner follows from the boundedness of $K_\alpha \colon W^\eta_0(\mathbb{R}_+) \to W^\eta_0(\mathbb{R}_+)$.
Then, by the expansion \eqref{eq:Kexpansion}, Lemma~\ref{lem:Vcpct}, Lemma~\ref{lem:remaindercont}, and the compact embedding of $H^\eta(\Gamma)$ into $L^2(\Gamma)$, we know that 
$$K - W \colon H^{\eta}(\Gamma) \to H^{\eta}(\Gamma)$$ 
is compact. To prove the lemma, we will show that $R \colon H^{\eta}(\Gamma) \to H^{\eta}(\Gamma)$ is a parametrix for $W - \lambda$, where
$$(Rg)_{\pm} =  \frac{1}{2}\left(\phi_1(K_\alpha - \lambda)^{-1}\phi_1 (g_+ + g_-) \pm \phi_1(K_\alpha + \lambda)^{-1} \phi_1 (g_- - g_+)\right) - \frac{1}{\lambda}(1-\phi_1^2)g_\pm$$
and $(Rg)_c = -\frac{1}{\lambda}g_c$. Here we mean $(K_\alpha \pm \lambda)^{-1}$ as an inverse on $W^{\eta}(\mathbb{R}_+)$, as in Lemma~\ref{lem:Rinvert2}, noting that a compactly supported function $f$ belongs to $W^\eta(\mathbb{R}_+)$ if and only if it belongs to $H^{\eta}(\mathbb{R}_+)$. 

Obviously, $(R(W-\lambda))_c f = ((W-\lambda)R)_c f = f_c$, and a standard (somewhat tedious) calculation shows that 
$$((W-\lambda)R)_+ f \pm ((W-\lambda)R)_- f = f_+ \pm f_- + C_\pm^{r} f$$
and
$$(R(W-\lambda))_+ f \pm (R(W-\lambda))_- f = f_+ \pm f_- + C_\pm^{l} f$$
where $C_+^r, C_+^l \colon H^{\eta}(\Gamma) \to H^{\eta}(\mathbb{R}_+)$ and $C_-^r, C_-^l \colon H^{\eta}(\Gamma) \to W_0^{\eta}(\mathbb{R}_+)$ are compact operators. Thus $(W-\lambda)R-I$ and $R(W-\lambda) - I$ are compact operators. That is, $W-\lambda$, and therefore $K-\lambda$, are Fredholm.
\end{proof}
\begin{remark}
	An inspection of the proofs of Lemma~\ref{lem:Rinvert2} and Theorem~\ref{thm:fredholm} shows that $K-\lambda \colon H^\eta(\Gamma) \to H^\eta(\Gamma)$ is also Fredholm for $\lambda = \pm (1-\alpha/\pi)$, $1/2 < \eta \leq 1$. However, we shall not need this.
\end{remark}
We introduce for $\lambda \in \widetilde{\Sigma}_{1,\alpha} \setminus \gamma_{1,\alpha}$ the function $q_{\lambda, \alpha} \in L^2(\Gamma)$ such that
$$(q_{\lambda, \alpha})_{\pm}(s) = \phi(s)s^{-\mu_\alpha(\lambda)}, \quad (q_{\lambda, \alpha})_{c} \equiv 0.$$
Note that $-1/2 < \mre \mu_{\alpha}(\lambda) < 1/2$ for such $\lambda$, by design. Similarly, for $\lambda \in -(\widetilde{\Sigma}_{1,\alpha} \setminus \gamma_{1,\alpha})$
we define $q_{\lambda, \alpha}$ so that 
$$(q_{\lambda, \alpha})_{\pm}(s) = \pm \phi(s)s^{\mu_\alpha(-\lambda)}, \quad (q_{\lambda, \alpha})_{c} \equiv 0.$$

For each $\lambda \in -(\widetilde{\Sigma}_{1,\alpha} \setminus \gamma_{1,\alpha}) \cup (\widetilde{\Sigma}_{1,\alpha} \setminus \gamma_{1,\alpha})$, we then consider the operator
$$T_\lambda \colon \mathbb{C} \oplus H^1(\Gamma) \to H^1(\Gamma)$$
such that
$$T_\lambda(c, f) = (K-\lambda)(cq_{\lambda,\alpha} + f).$$
\begin{lemma} \label{lem:anal}
$T_\lambda \colon \mathbb{C} \oplus H^1(\Gamma) \to H^1(\Gamma)$ is bounded, and the operator-valued function $\lambda \mapsto T_\lambda$ is analytic in $\widetilde{\Sigma}_{1,\alpha} \setminus \gamma_{1,\alpha}$ and $-(\widetilde{\Sigma}_{1,\alpha} \setminus \gamma_{1,\alpha})$.
\end{lemma}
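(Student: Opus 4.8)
The plan is to reduce the whole lemma to a statement about the single vector $(K-\lambda)q_{\lambda,\alpha}$. Indeed $T_\lambda(c,f)=c\,(K-\lambda)q_{\lambda,\alpha}+(K-\lambda)f$, and $K-\lambda$ already acts boundedly on $H^1(\Gamma)$ with affine (hence entire) dependence on $\lambda$, so it suffices to establish: (i) $(K-\lambda)q_{\lambda,\alpha}\in H^1(\Gamma)$ with $H^1(\Gamma)$-norm locally bounded in $\lambda$; and (ii) $\lambda\mapsto(K-\lambda)q_{\lambda,\alpha}$ is analytic as an $H^1(\Gamma)$-valued map on $\widetilde{\Sigma}_{1,\alpha}\setminus\gamma_{1,\alpha}$ and on $-(\widetilde{\Sigma}_{1,\alpha}\setminus\gamma_{1,\alpha})$.

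For (i), I would compute $Kq_{\lambda,\alpha}$ near the corner by means of the expansion \eqref{eq:Kexpansion}. Write $\mu=\mu_\alpha(\lambda)$; the components $(q_{\lambda,\alpha})_\pm$ are $\pm$-copies of $\phi(s)s^{-\mu}$, and the only singular ingredient appearing in $(Kq_{\lambda,\alpha})_\pm$ is $\phi K_\alpha(\phi s^{-\mu})$. By Lemma~\ref{lem:mellinsing}, $\mathcal{M}(\phi s^{-\mu})(z)=\psi(z)/(z-\mu)$ with $\psi$ entire and rapidly decreasing, so
$$\mathcal{M}\bigl(K_\alpha(\phi s^{-\mu})-\lambda\phi s^{-\mu}\bigr)(z)=\frac{\widehat{K}_\alpha(z)-\lambda}{z-\mu}\,\psi(z),$$
and since $\widehat{K}_\alpha(\mu)=\lambda$ by the definition of $\mu_\alpha$, the factor $(\widehat{K}_\alpha(z)-\lambda)/(z-\mu)$ has a removable singularity at $z=\mu$ and is in fact analytic and bounded across the strip $|\mre z|<1$, as $\widehat{K}_\alpha$ has no poles there. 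Hence the displayed Mellin transform is analytic and rapidly decreasing on $|\mre z|<1$, and by Lemma~\ref{lem:melliniso} the function $K_\alpha(\phi s^{-\mu})-\lambda\phi s^{-\mu}$ is smooth and vanishes to almost first order at $s=0$; multiplying by the cut-off $\phi$ from \eqref{eq:Kexpansion} places it in $W_0^1(\mathbb{R}_+)$. The remaining terms of \eqref{eq:Kexpansion} are handled by results already available: $\phi V_\pm(q_{\lambda,\alpha})_\mp\in H^1(\mathbb{R}_+)$ by Lemma~\ref{lem:Vcpct}; the combination $\phi\bigl(V_+(q_{\lambda,\alpha})_- -V_-(q_{\lambda,\alpha})_+\bigr)+(R_+-R_-)q_{\lambda,\alpha}\in W_0^1(\mathbb{R}_+)$ by Lemma~\ref{lem:remaindercont}; each $R_\pm q_{\lambda,\alpha}$ lies in $H^1(\mathbb{R}_+)$; and $(Kq_{\lambda,\alpha})_c=K_cq_{\lambda,\alpha}\in C_c^1((0,1])\subset W_0^1(\mathbb{R}_+)$. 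Because of the sign relation between $(q_{\lambda,\alpha})_+$ and $(q_{\lambda,\alpha})_-$, the singular term $\phi K_\alpha(\phi s^{-\mu})$ cancels in exactly one of the combinations $(Kq_{\lambda,\alpha})_+\pm(Kq_{\lambda,\alpha})_-$, while in the other it contributes only $2\phi(K_\alpha(\phi s^{-\mu})-\lambda\phi s^{-\mu})$ modulo $W_0^1(\mathbb{R}_+)$-errors; feeding this into Lemma~\ref{lem:sobgamma} yields $(K-\lambda)q_{\lambda,\alpha}\in H^1(\Gamma)$. The quantitative bounds used depend only on $\|q_{\lambda,\alpha}\|_{L^2(\Gamma)}$ and on sup-norm control of $(\widehat{K}_\alpha(z)-\lambda)/(z-\mu_\alpha(\lambda))$ and of $\psi$ along the relevant lines, all locally uniform in $\lambda$ since $\mu_\alpha(\lambda)$ stays in a compact subset of the half-strip; this gives the local boundedness. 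The case $\lambda\in-(\widetilde{\Sigma}_{1,\alpha}\setminus\gamma_{1,\alpha})$ is identical after reading off the sign conventions and using $\widehat{K}_\alpha\bigl(-\mu_\alpha(-\lambda)\bigr)=-\lambda$.

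For (ii), I would first note that $\lambda\mapsto q_{\lambda,\alpha}$ is analytic as an $L^2(\Gamma)$-valued map: its nonzero components are $\phi(s)$ times a power $s^{-w(\lambda)}$ (with signs), where $w(\lambda)$ depends holomorphically on $\lambda$ by Lemma~\ref{lem:univalent} and $\mre w(\lambda)<1/2$, so that $\partial_w(\phi s^{-w})=-\phi s^{-w}\log s\in L^2(\mathbb{R}_+)$ locally uniformly. As $K-\lambda$ is bounded on $L^2(\Gamma)$ and affine in $\lambda$, the map $\lambda\mapsto(K-\lambda)q_{\lambda,\alpha}$ is analytic into $L^2(\Gamma)$. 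By (i) it takes values in $H^1(\Gamma)$ and is locally bounded in the $H^1(\Gamma)$-norm; since $H^1(\Gamma)\hookrightarrow L^2(\Gamma)$ continuously and the pairings $\langle\,\cdot\,,g\rangle_{L^2(\Gamma)}$, $g\in L^2(\Gamma)$, form a total set of bounded functionals on the Hilbert space $H^1(\Gamma)$, the standard fact that a locally bounded, weakly analytic vector-valued function is analytic upgrades the $L^2$-analyticity to analyticity into $H^1(\Gamma)$; equivalently, the Cauchy integral $\frac{1}{2\pi i}\oint F(w)(w-\lambda_0)^{-2}\,dw$ converges in $H^1(\Gamma)$ by local boundedness and represents the $H^1(\Gamma)$-derivative. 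Adding the bounded affine term $(K-\lambda)f$ gives the analyticity of $\lambda\mapsto T_\lambda$.

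The heart of the matter is the cancellation exploited in step (i): the defining identity $\widehat{K}_\alpha(\mu_\alpha(\lambda))=\lambda$ turns the simple pole of $\mathcal{M}(K_\alpha(\phi s^{-\mu}))$ at $z=\mu_\alpha(\lambda)$ into a removable singularity once $\lambda\phi s^{-\mu}$ is subtracted, and it is precisely this gain of regularity that lifts the merely $L^2(\Gamma)$ singular function $q_{\lambda,\alpha}$ to $(K-\lambda)q_{\lambda,\alpha}\in H^1(\Gamma)$. Bookkeeping the various cut-offs and the smoothing and remainder operators of \eqref{eq:Kexpansion} so that the final function genuinely satisfies the matching conditions of Lemma~\ref{lem:sobgamma} is the routine-but-careful part, and the analyticity in step (ii) is comparatively soft once the local $H^1(\Gamma)$-bound is secured.
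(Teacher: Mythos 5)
Your proposal is correct, and its core coincides with the paper's own argument: you reduce the lemma to the single vector $(K-\lambda)q_{\lambda,\alpha}$, expand via \eqref{eq:Kexpansion} together with Lemmas~\ref{lem:Vcpct}, \ref{lem:remaindercont} and the matching conditions of Lemma~\ref{lem:sobgamma}, and the decisive point is exactly the paper's equation \eqref{eq:anal}: since $\widehat{K}_\alpha(\mu_\alpha(\lambda))=\lambda$, the Mellin transform $\bigl(\widehat{K}_\alpha(z)-\lambda\bigr)\psi_\lambda(z)/(z-\mu_\alpha(\lambda))$ has a removable singularity and is rapidly decreasing on interior substrips of $-1<\mre z<1$, which places $(K_\alpha-\lambda)\bigl(\phi(s)s^{-\mu_\alpha(\lambda)}\bigr)$ in $W_0^1(\mathbb{R}_+)$. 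Where you genuinely diverge is the analyticity step: the paper stays on the Mellin side, showing that $\hat{u}_\lambda$ and $\frac{d}{d\lambda}\hat{u}_\lambda$ are rapidly decreasing, uniformly for $\lambda$ near a fixed $\lambda_0$, on a strip about $\mre z=-1/2$, and then applies Cauchy's formula to get analyticity into the weighted $L^2$-space defining $W_0^1(\mathbb{R}_+)$; you instead observe that $\lambda\mapsto q_{\lambda,\alpha}$ (hence $\lambda\mapsto(K-\lambda)q_{\lambda,\alpha}$) is analytic into $L^2(\Gamma)$, prove local boundedness in $H^1(\Gamma)$, and upgrade via the principle that a locally bounded function which is weakly analytic against a separating family of functionals (here the $L^2(\Gamma)$-pairings, bounded on $H^1(\Gamma)$) is norm-analytic. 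That soft upgrade is legitimate -- note that with only a separating rather than norming family the local boundedness is essential, so your step (i) bound is doing real work, and your Cauchy-integral fallback covers it in any case -- and it buys you freedom from differentiating the Mellin transform in $\lambda$, at the cost of invoking a vector-valued holomorphy theorem the paper does not need. Two cosmetic cautions: $(\widehat{K}_\alpha(z)-\lambda)/(z-\mu_\alpha(\lambda))$ is not bounded up to $\mre z=\pm1$ (poles of $\widehat{K}_\alpha$ at $z=\pm1$), only analytic in the open strip and rapidly decreasing on interior substrips, which is all you use; and for the local $H^1$-bound the sup-norm control you cite should be phrased as a weighted $L^2$ bound on the line $\mre z=-1/2$, which indeed follows from the rapid decrease of $\psi_\lambda$ that you already have.
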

\begin{proof}
	We only have to check that $\lambda \mapsto (K-\lambda)q_{\lambda, \alpha} \in H^1(\Gamma)$ is well-defined and analytic. By the expansion \eqref{eq:Kexpansion},
	$$[(K-\lambda)q_{\lambda, \alpha}]_\pm = \phi K_\alpha (q_{\lambda, \alpha})_\mp - \lambda \phi (q_{\lambda, \alpha})_\pm - \lambda(1-\phi)(q_{\lambda, \alpha})_\pm + \phi V_\pm (q_{\lambda, \alpha})_\mp + R_\pm q_{\lambda, \alpha}. $$
	Since $\lambda \mapsto q_{\lambda, \alpha} \in L^2(\Gamma)$ and $\lambda \mapsto (1-\phi)(q_{\lambda, \alpha})_\pm \in W^1_0(\mathbb{R}_+)$ are analytic, it is by Lemmas~\ref{lem:Vcpct} and \ref{lem:remaindercont} sufficient to demonstrate that
			$$\lambda \mapsto \phi (K_\alpha + \lambda)( (q_{\lambda, \alpha})_- - (q_{\lambda, \alpha})_+) \in W^1_0(\mathbb{R}_+)$$
	and
		$$\lambda \mapsto \phi (K_\alpha - \lambda)( (q_{\lambda, \alpha})_- + (q_{\lambda, \alpha})_+) \in H^1(\mathbb{R}_+) $$
		are analytic. 
		
	The question therefore boils down to proving that $\lambda \mapsto (K_\alpha -\lambda) v_\lambda \in W^1_0(\mathbb{R}_+)$ is analytic for $\lambda \in \widetilde{\Sigma}_{1,\alpha} \setminus \gamma_{1,\alpha}$, where $v_\lambda(s) = \phi(s)s^{-\mu_\alpha(\lambda)}$. Integration by parts yields that
	$$\hat{v}_\lambda(z) = \frac{\psi_\lambda(z)}{z - \mu_{\alpha}(\lambda)}, \quad \mre z > \mre \mu_{\alpha}(\lambda),$$
	where $\psi_\lambda(z) = -\int_0^1 s^{z-\mu_{\alpha}(\lambda)} \phi'(s) \, ds.$ Therefore
	\begin{equation} \label{eq:anal}
	\mathcal{M}((K_\alpha - \lambda)v_\lambda)(z) = \frac{\widehat{K}_\alpha(z) - \lambda}{z - \mu_{\alpha}(\lambda)} \psi_\lambda(z)
	\end{equation}
	has an analytic extension to $-1 < \mre z < 1$ which is rapidly decreasing. By Lemma~\ref{lem:melliniso}, there is $u_\lambda \in C^\infty(0,\infty)$, satisfying the appropriate bounds at $0$ and $\infty$, such that
	$$\hat{u}_\lambda = \mathcal{M}((K_\alpha - \lambda)v_\lambda)(z), \quad -1 < \mre z < 1.$$
	In particular this holds for $\mre z = 1/2$, and thus $u_\lambda = (K_\alpha - \lambda) v_\lambda$. On the other hand, it is clear from \eqref{eq:anal} that $u_\lambda \in W^1_0(\mathbb{R}_+)$. 
	
	Furthermore, for $\lambda$ in a sufficiently small neighborhood of a fixed $\lambda_0 \in \widetilde{\Sigma}_{1,\alpha} \setminus \gamma_{1,\alpha}$, $\hat{u}_\lambda$ and $\frac{d}{d\lambda} \hat{u}_\lambda$ are rapidly decreasing in a strip $|\mre z + 1/2|< \varepsilon$, $\varepsilon > 0$ sufficiently small, with uniform estimates. Cauchy's formula then implies that
	 $$\lambda \to \hat{u}_\lambda \in L^2(\mre z = -1/2, \, (1+|z|^2) |dz|)$$ is analytic. This is of course the same as saying that $$\lambda \mapsto (K_\alpha - \lambda) v_\lambda \in W^1_0(\mathbb{R}_+)$$
	 is analytic.
\end{proof}

By slight abuse of notation, we will identify the pair $T_\lambda^{-1}g = (c_\lambda, \tilde{f}_\lambda)$ with the function $c_\lambda q_{\lambda, \alpha} + \tilde{f}_\lambda \in L^2(\Gamma)$, 
$$T_\lambda^{-1}g = c_\lambda q_{\lambda, \alpha} + \tilde{f}_\lambda.$$

\begin{theorem} \label{thm:Tinvert}
$T_\lambda \colon \mathbb{C} \oplus H^1(\Gamma) \to H^1(\Gamma)$ is invertible for every $\lambda \in \widetilde{\Sigma}_{1,\alpha} \setminus \gamma_{1,\alpha}$ with $\mim \lambda \geq 0$, except for $\lambda = \rho_n$ belonging to a (possibly finite) sequence $\{\rho_n\} \subset (0, 1-\alpha/\pi)$ that can only accumulate at $0$ or $(1-\alpha/\pi)$. $T_{\rho_n}$ is Fredholm with index $0$ for every such exceptional point. The operator-valued function $\lambda \mapsto T_\lambda^{-1}$ has a meromorphic extension to all of $\widetilde{\Sigma}_{1,\alpha} \setminus \gamma_{1,\alpha}$, having a pole at each $\rho_n$ in addition to possible poles for $\mim \lambda < 0$. At each pole, the negative order operator coefficients in the Laurent expansion are of finite rank. Outside of the poles, it holds that
\begin{equation} \label{eq:Tinverse}
(K-\lambda)T_\lambda^{-1}g = g, \quad g \in H^{1}(\Gamma).
\end{equation}

Similarly, $T_\lambda$ is invertible for every $\lambda \in -(\widetilde{\Sigma}_{1,\alpha} \setminus \gamma_{1,\alpha})$ with $\mim \lambda \geq 0$, except for a sequence $\{\rho_n'\} \subset (-(1-\alpha/\pi), 0)$ that can only accumulate at $0$ or $-(1-\alpha/\pi)$. All the conclusions of the previous case hold.
\end{theorem}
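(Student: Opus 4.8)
The plan is to run the analytic Fredholm theorem on the analytic family $\lambda\mapsto T_\lambda$ over the connected open set $\Omega:=\widetilde\Sigma_{1,\alpha}\setminus\gamma_{1,\alpha}$ (connectedness holds since $\widetilde\Sigma_{1,\alpha}$ is a Jordan domain and $\gamma_{1,\alpha}$ is a slit running from an interior point to a boundary point of it), and then to locate the exceptional set. Analyticity of $\lambda\mapsto T_\lambda$ is Lemma~\ref{lem:anal}, so the two things to establish are that $T_\lambda$ is Fredholm of index $0$ for every $\lambda\in\Omega$ and that $T_\lambda$ is invertible for at least one $\lambda\in\Omega$.

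For invertibility at $\mim\lambda>0$: since $q_{\lambda,\alpha}\in H^{1/2}(\Gamma)$ while $q_{\lambda,\alpha}\notin H^1(\Gamma)$, and $K-\lambda\colon H^{1/2}(\Gamma)\to H^{1/2}(\Gamma)$ is invertible by self-adjointness, the equation $T_\lambda(c,\tilde f)=(K-\lambda)(cq_{\lambda,\alpha}+\tilde f)=0$ forces $cq_{\lambda,\alpha}+\tilde f=0$ in $H^{1/2}(\Gamma)$ and hence $c=0$, $\tilde f=0$; so $T_\lambda$ is injective. Surjectivity is precisely the content of Theorem~\ref{thm:Gammaanalysis}: for $g\in H^1(\Gamma)$, the resolvent $(K-\lambda)^{-1}g$ has the form $c_\lambda q_{\lambda,\alpha}+\tilde f_\lambda$ with $\tilde f_\lambda\in H^1(\Gamma)$ (using Lemma~\ref{lem:sobgamma} to reassemble the $\pm$ and $c$ components), so $T_\lambda(c_\lambda,\tilde f_\lambda)=g$. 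Thus $T_\lambda$ is invertible whenever $\mim\lambda>0$.

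For the Fredholm property I would build a two-sided parametrix modulo compacts, following the proof of Theorem~\ref{thm:fredholm}. Writing $K=W+(K-W)$ with $W$ the corner model (so $K-W$ is compact by Lemmas~\ref{lem:Vcpct}, \ref{lem:remaindercont} and the compact embedding $H^1(\Gamma)\hookrightarrow L^2(\Gamma)$), $T_\lambda$ agrees with $(c,\tilde f)\mapsto(W-\lambda)(cq_{\lambda,\alpha}+\tilde f)$ up to a compact operator. Passing to the sum and difference of the $\Gamma_\pm$-components, one inverts $K_\alpha+\lambda$ on $W_0^1(\mathbb{R}_+)$ in the difference channel — possible because $-\lambda$ lies in the left half-plane, away from $\Sigma_{1,\alpha}$ — uses $-1/\lambda$ in the $\Gamma_c$ channel and away from the corner, and inverts $K_\alpha-\lambda$ on the weighted spaces with the singular mode $q_{\lambda,\alpha}$ adjoined in the sum channel, reproducing the Mellin bookkeeping of the proof of Theorem~\ref{thm:mellinanalysis}. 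The decisive structural facts are that $\widehat K_\alpha(z)-\lambda$ is bounded away from zero on the contours $\mre z=\pm\tfrac12$ (because $\lambda$ lies in the interior of the Jordan curve $\Sigma_{1,\alpha}=\widehat K_\alpha(\{\mre z=\pm\tfrac12\})$ and $\widehat K_\alpha\to 0$, $\lambda\neq 0$, at infinity), and that the finitely many zeros of $\widehat K_\alpha(z)-\lambda$ in the strip $|\mre z|<\tfrac12$ are absorbed by the adjoined singular function together with the critical-index matching conditions built into $H^1(\Gamma)$ (Lemma~\ref{lem:sobgamma}). Since the Fredholm index is locally constant and vanishes on the nonempty open subset $\{\mim\lambda>0\}\cap\Omega$, it vanishes throughout $\Omega$.

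The analytic Fredholm theorem now gives that $\lambda\mapsto T_\lambda^{-1}$ extends meromorphically to $\Omega$, with a discrete pole set and finite-rank negative-order Laurent coefficients, and off the poles $(K-\lambda)T_\lambda^{-1}g=(K-\lambda)(c_\lambda q_{\lambda,\alpha}+\tilde f_\lambda)=T_\lambda(c_\lambda,\tilde f_\lambda)=g$ by the definition of $T_\lambda$, which is \eqref{eq:Tinverse}. Because $T_\lambda$ is invertible for $\mim\lambda>0$, every pole with $\mim\lambda\geq 0$ lies on the real segment $(\widetilde\Sigma_{1,\alpha}\setminus\gamma_{1,\alpha})\cap\mathbb{R}=(0,1-\alpha/\pi)$; being discrete it can accumulate only at the endpoints $0$ and $1-\alpha/\pi$, and these poles are the $\rho_n$, while at each $\rho_n$ the operator $T_{\rho_n}$ is Fredholm of index $0$ by the previous paragraph (any further poles lie in $\{\mim\lambda<0\}$). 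The corresponding statements on $-(\widetilde\Sigma_{1,\alpha}\setminus\gamma_{1,\alpha})$ follow either from the symmetry $\widehat K_\alpha(-z)=\widehat K_\alpha(z)$ (which now makes the singular function $q_{\lambda,\alpha}$ live in the difference channel) or from the unitary equivalence of $K$ and $-K$ on $\mathcal{E}'/\mathbb{C}$. I expect the Fredholm step to be the main obstacle — in particular verifying that a single adjoined singular function $q_{\lambda,\alpha}$ plus the matching conditions of $H^1(\Gamma)$ make $K-\lambda$ Fredholm of index $0$ when $\lambda$ lies on the real segment $(0,1-\alpha/\pi)$, where $q_{\lambda,\alpha}\notin H^{1/2}(\Gamma)$ and $K-\lambda$ is not Fredholm on $H^1(\Gamma)$ alone — as it forces one to redo the Mellin analysis of Section~\ref{sec:mellin} with the symbol's zeros sitting exactly on the line $\mre z=0$ rather than strictly inside a half-strip.
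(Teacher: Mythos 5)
Your overall route is the paper's route: invertibility of $T_\lambda$ for $\mim\lambda>0$ (injectivity from invertibility of $K-\lambda$ on $H^{1/2}(\Gamma)$ plus $q_{\lambda,\alpha}\notin H^1(\Gamma)$, surjectivity from Theorem~\ref{thm:Gammaanalysis}), analyticity from Lemma~\ref{lem:anal}, the analytic Fredholm theorem on the connected slit domain, index $0$ by local constancy, and localization of the non-negative-imaginary-part poles to $(0,1-\alpha/\pi)$ with accumulation only at the endpoints. All of that is correct and matches the paper.

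The gap is the Fredholm step, which you leave as a sketch and flag as "the main obstacle" for a reason that is factually wrong. You assert that for real $\lambda\in(0,1-\alpha/\pi)$ the operator $K-\lambda$ is not Fredholm on $H^1(\Gamma)$ and that one must redo the Mellin analysis with symbol zeros on $\mre z=0$. In fact such $\lambda$ lie in the \emph{interior} of the Jordan curve $\Sigma_{1,\alpha}=\clos\widehat K_\alpha(-\tfrac12+i\mathbb{R})$, avoid $-\Sigma_{1,\alpha}$, and are $\neq\pm(1-\alpha/\pi)$ (the point $1-\alpha/\pi$ is an endpoint of $\gamma_{1,\alpha}$ and is excised), so Theorem~\ref{thm:fredholm} with $\eta=1$ already gives that $K-\lambda\colon H^1(\Gamma)\to H^1(\Gamma)$ is Fredholm throughout $\widetilde\Sigma_{1,\alpha}\setminus\gamma_{1,\alpha}$, real points included: the contours relevant to $H^1(\Gamma)$ via Lemma~\ref{lem:sobgamma} are $\mre z=\pm\tfrac12$, where $\widehat K_\alpha-\lambda$ is bounded below; the zeros of $\widehat K_\alpha-\lambda$ at $\pm\mu_\alpha(\lambda)$ (on $\mre z=0$ for real $\lambda$) are what destroy Fredholmness on $H^{1/2}(\Gamma)$, not on $H^1(\Gamma)$. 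Once this is noted, Fredholmness of $T_\lambda$ is immediate and needs no new parametrix: $T_\lambda$ agrees with $K-\lambda$ on $\{0\}\oplus H^1(\Gamma)$ and differs only by extending the domain by the one-dimensional space $\mathbb{C}$, mapped by the bounded rank-one column $c\mapsto c(K-\lambda)q_{\lambda,\alpha}$ (well defined and analytic by Lemma~\ref{lem:anal}); a finite-dimensional domain extension of a Fredholm operator is Fredholm, with $\ind T_\lambda=\ind\bigl((K-\lambda)|_{H^1(\Gamma)}\bigr)+1$, and your index argument then closes as stated. Your from-scratch parametrix with the adjoined singular mode could presumably be made to work, but as written it is only a plan, and the difficulty you anticipate is not where you place it. (Minor point: for the reflected region $-(\widetilde\Sigma_{1,\alpha}\setminus\gamma_{1,\alpha})$, the clean argument is to rerun the same proof with the second case of Theorem~\ref{thm:Gammaanalysis}, i.e.\ the singular function placed in the difference channel so that $K_\alpha+\lambda$ carries the singularity; invoking the unitary equivalence of $K$ and $-K$ on $\mathcal{E}'/\mathbb{C}$ would need extra care with the quotient by constants, and the paper only uses that symmetry for eigenvalues.)
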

\begin{proof}
We treat only the case when $\lambda \in \widetilde{\Sigma}_{1,\alpha} \setminus \gamma_{1,\alpha}$, as the other statement is proven in exactly the same way. 

First of all, we observe that $T_\lambda$ is invertible for $\mim \lambda > 0$. For suppose that $T_\lambda (c, f) = 0$ for some $c \in \mathbb{C}$, $f \in H^1(\Gamma)$. Then $h = cq_{\lambda, \alpha} + f \in H^{1/2}(\Gamma)$ satisfies $(K-\lambda)h = 0$. However, $K-\lambda \colon H^{1/2}(\Gamma) \to H^{1/2}(\Gamma)$ is invertible for all $\mim \lambda \neq 0$, and thus $h = 0$, whence $c = f = 0$. To see that any $g \in H^1(\Gamma)$ is in the range of $T_\lambda$, note, again by the invertibility of $K-\lambda \colon H^{1/2}(\Gamma) \to H^{1/2}(\Gamma)$, that there is $f_\lambda \in H^{1/2}(\Gamma)$ satisfying $(K-\lambda)f_\lambda = g$. But by Theorem~\ref{thm:Gammaanalysis}, this solution is of the form $f_\lambda = c_\lambda q_{\lambda, \alpha} + \tilde{f}_\lambda$, with $\tilde{f}_\lambda \in H^1(\Gamma)$.

Secondly, by Theorem~\ref{thm:fredholm} and Lemma~\ref{lem:anal}, $ \lambda \mapsto T_\lambda$, $\lambda \in \widetilde{\Sigma}_{1,\alpha} \setminus \gamma_{1,\alpha}$, is an analytic family of Fredholm operators. Since $T_\lambda$ is invertible for at least one $\lambda$, we have that $\ind T_\lambda = 0$ for every $\lambda$. Furthermore, by the analytic Fredholm theorem \cite[Corollary~8.4]{GGK90}, the set $\{w_n\}$, of points where $T_{w_n}$ fails to be invertible, is discrete in $\widetilde{\Sigma}_{1,\alpha} \setminus \gamma_{1,\alpha}$, and $T^{-1}_\lambda$ is meromorphic, with finite rank negative order operator coefficients at its poles. Equation \eqref{eq:Tinverse} remains valid all $\lambda \neq w_n$, by analyticity.
\end{proof}

As we shall now speak of $K \colon H^{1/2}(\Gamma) \to H^{1/2}(\Gamma)$ as a self-adjoint operator, it is in the sequel important to think of $H^{1/2}(\Gamma)$ as being equipped with the $\mathcal{E}'$-norm. Let $K = \int_{-1}^1 \lambda \, dE(\lambda)$ be the spectral decomposition of $K$. Recall that for $f,h \in H^{1/2}(\Gamma)$, we denote by 
$$\nu_{f,h}(\Lambda) = \langle E(\Lambda) f, h \rangle_{\mathcal{E}'}, \quad \Lambda \subset [-1,1] \textrm{ Borel},$$
the associated spectral measure.

\begin{lemma} \label{lem:eigenvalues}
The sequences $\{\rho_n'\}$ and $\{\rho_n\}$ of Theorem~\ref{thm:Tinvert} are precisely those  eigenvalues of $K \colon H^{1/2}(\Gamma) \to H^{1/2}(\Gamma)$ that are embedded in $(-(1-\alpha/\pi), 0) \cup (0, 1-\alpha/\pi)$. Therefore $\rho_n' = -\rho_n$, if ordered correctly. Each $\rho_n$ is in fact a simple pole of $T_\lambda^{-1}$, and for $g, h \in H^1(\Gamma)$,
\begin{equation} \label{eq:resformula}
\nu_{g,h}(\{\rho_n\}) = -\Res_{\lambda = \rho_n} \langle T^{-1}_{\lambda}g, h \rangle_{\mathcal{E}'}.
\end{equation}
A similar statement holds for $\rho_n'$.
\end{lemma}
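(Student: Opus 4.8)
The plan is to recognize $\lambda\mapsto\langle T_\lambda^{-1}g,h\rangle_{\mathcal{E}'}$, for $g,h\in H^1(\Gamma)$, as the Cauchy transform of the spectral measure $\nu_{g,h}$ near the real axis, and then to read atoms off poles and residues. The one technical wrinkle is that $q_{\lambda,\alpha}\notin H^{1/2}(\Gamma)$ exactly when $\lambda$ is real, so I first extend the $\mathcal{E}'$-pairing: for $f\in L^2(\Gamma)$ and $h\in H^1(\Gamma)$ put $\langle f,h\rangle_{\mathcal{E}'}:=\langle f,S^{-1}h\rangle_{L^2(\Gamma)}$, which makes sense since $S^{-1}h\in L^2(\Gamma)$, and which agrees with the genuine $\mathcal{E}'$-inner product when $f\in H^{1/2}(\Gamma)$ (write $f=Sg$, $h=Sk$ and use $\langle Sg,Sk\rangle_{\mathcal{E}'}=\langle g,k\rangle_{\mathcal{E}}=\langle Sg,k\rangle_{L^2(\Gamma)}$). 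With $T_\lambda^{-1}g=c_\lambda q_{\lambda,\alpha}+\tilde f_\lambda$ as in Theorem~\ref{thm:Tinvert}, one has $\langle T_\lambda^{-1}g,h\rangle_{\mathcal{E}'}=c_\lambda\langle q_{\lambda,\alpha},S^{-1}h\rangle_{L^2(\Gamma)}+\langle\tilde f_\lambda,h\rangle_{\mathcal{E}'}$; since $\lambda\mapsto q_{\lambda,\alpha}\in L^2(\Gamma)$ is analytic on $\widetilde\Sigma_{1,\alpha}\setminus\gamma_{1,\alpha}$ (where $|\mre\mu_\alpha(\lambda)|<1/2$) while $\lambda\mapsto(c_\lambda,\tilde f_\lambda)$ is meromorphic there by Theorem~\ref{thm:Tinvert}, the function $M_{g,h}(\lambda):=\langle T_\lambda^{-1}g,h\rangle_{\mathcal{E}'}$ is meromorphic on $\widetilde\Sigma_{1,\alpha}\setminus\gamma_{1,\alpha}$, with poles in $(0,1-\alpha/\pi)$ occurring only among the $\rho_n$. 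For $\mim\lambda>0$ Theorem~\ref{thm:Tinvert} gives $T_\lambda^{-1}g=(K-\lambda)^{-1}g\in H^{1/2}(\Gamma)$, so by the spectral theorem $M_{g,h}(\lambda)=\langle(K-\lambda)^{-1}g,h\rangle_{\mathcal{E}'}=\int_{-1}^1(t-\lambda)^{-1}\,d\nu_{g,h}(t)$ there.

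Fix $\rho_n$ and a disc $D\ni\rho_n$ inside $\widetilde\Sigma_{1,\alpha}\setminus\gamma_{1,\alpha}$ on which $M_{g,h}$ is analytic off $\rho_n$. From the Cauchy-transform formula, $|M_{g,h}(\lambda)|\le\|\nu_{g,h}\|/\mim\lambda$ for $\lambda\in D$, $\mim\lambda>0$; a pole of order $N\ge2$ at $\rho_n$ would force $|M_{g,h}(\rho_n+i\varepsilon)|\sim c\varepsilon^{-N}$, contradicting this, so the pole is at most simple. Writing $M_{g,h}(\lambda)=r_{g,h}(\lambda-\rho_n)^{-1}+(\text{analytic})$ with $r_{g,h}=\Res_{\lambda=\rho_n}M_{g,h}$, and noting $\int_{t\ne\rho_n}\varepsilon(t-\rho_n-i\varepsilon)^{-1}\,d\nu_{g,h}(t)\to0$ as $\varepsilon\to0^+$ by dominated convergence, I multiply both expressions for $M_{g,h}(\rho_n+i\varepsilon)$ by $-i\varepsilon$ and let $\varepsilon\to0^+$ to obtain $\nu_{g,h}(\{\rho_n\})=-r_{g,h}$, which is \eqref{eq:resformula}. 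Polarization shows $M_{g,h}$ has at most a simple pole at $\rho_n$ for all $g,h\in H^1(\Gamma)$; since $(c,f)\mapsto cq_{\rho_n,\alpha}+f$ is injective from $\mathbb{C}\oplus H^1(\Gamma)$ into $L^2(\Gamma)$ (as $q_{\rho_n,\alpha}\notin H^1(\Gamma)$) and the functionals $\langle\,\cdot\,,S^{-1}h\rangle_{L^2(\Gamma)}$, $h\in H^1(\Gamma)$, separate $L^2(\Gamma)$, this forces the pole of $T_\lambda^{-1}$ at $\rho_n$ — a genuine pole by Theorem~\ref{thm:Tinvert} — to be simple.

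It remains to match $\{\rho_n\}$ with the embedded eigenvalues. Genuineness of the pole at $\rho_n$ means its finite-rank residue operator is nonzero, so by the separation property there are $g,h\in H^1(\Gamma)$ with $r_{g,h}\ne0$; then $\nu_{g,h}(\{\rho_n\})\ne0$, hence $E(\{\rho_n\})\ne0$ and $\rho_n$ is an eigenvalue of $K\colon H^{1/2}(\Gamma)\to H^{1/2}(\Gamma)$. Conversely, if $u_0\in(0,1-\alpha/\pi)$ is an eigenvalue then $E(\{u_0\})\ne0$, and since $H^1(\Gamma)=SL^2(\Gamma)$ is dense in $H^{1/2}(\Gamma)$ there are $g,h\in H^1(\Gamma)$ with $\nu_{g,h}(\{u_0\})\ne0$; the same limit computation shows $M_{g,h}$ cannot be analytic at $u_0$, so $u_0$ is a pole of $M_{g,h}$ and therefore $u_0\in\{\rho_n\}$. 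Running the whole argument with $-\widetilde\Sigma_{1,\alpha}$ and $-\gamma_{1,\alpha}$ in place of $\widetilde\Sigma_{1,\alpha}$ and $\gamma_{1,\alpha}$ identifies $\{\rho_n'\}$ with the embedded eigenvalues in $(-(1-\alpha/\pi),0)$; combined with the symmetry of $\sigma(K,H^{1/2}(\Gamma))$ about $\lambda=0$ recorded in Section~\ref{subsec:spec}, this gives $\{\rho_n'\}=-\{\rho_n\}$, i.e. $\rho_n'=-\rho_n$ after a suitable reindexing. The main obstacle is the first paragraph: extending the $\mathcal{E}'$-pairing across the threshold at which the singular functions $q_{\lambda,\alpha}$ leave $H^{1/2}(\Gamma)$, so that $M_{g,h}$ is genuinely meromorphic there; once the Cauchy-transform representation is in place, simplicity of the poles, the residue formula, and the eigenvalue bijection are all soft.
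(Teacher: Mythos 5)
Your proof is correct and follows essentially the same route as the paper: identify $\langle T_\lambda^{-1}g,h\rangle_{\mathcal{E}'}$ (via the pairing $\langle\,\cdot\,,S^{-1}h\rangle_{L^2(\Gamma)}$) with the Cauchy transform of $\nu_{g,h}$ for $\mim\lambda>0$, let $\mim\lambda\to0^+$, and read off the point masses, simplicity of the poles, and the residue formula from the meromorphic continuation $T_\lambda^{-1}$, with the spectral symmetry giving $\rho_n'=-\rho_n$. The only differences are cosmetic: you exclude higher-order poles via the Herglotz-type bound $|M_{g,h}(\lambda)|\le\|\nu_{g,h}\|/\mim\lambda$ and spell out the scalar-to-operator transfer (injectivity of $(c,f)\mapsto cq_{\rho_n,\alpha}+f$ since $q_{\rho_n,\alpha}\notin H^1(\Gamma)$, plus the separating functionals $\langle\,\cdot\,,S^{-1}h\rangle_{L^2(\Gamma)}$), which the paper handles with the limit $\mre\nu_{g,h}(\{\lambda\})=\lim_{y\to0^+}\mim\langle y(K-(\lambda+iy))^{-1}f,h\rangle_{\mathcal{E}'}$ and an implicit choice of $f,h$.
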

\begin{proof}
	Suppose that $\lambda \in (0, 1-\alpha/\pi)$ is an eigenvalue of $K$ with eigenvector $f \in H^{1/2}(\Gamma)$, $\|f\|_{\mathcal{E}'} = 1$. Then $\nu_{f,f}$ is a point mass at $\lambda$, $\nu_{f,f}(\{\lambda\})=1$. By density, we can find $g \in H^1(\Gamma)$ such that $g = f + h$, with $\|h\|_{\mathcal{E}'} < 1/4$. Then $\nu_{g,g}(\{\lambda\}) \geq 1/2$, since
	$$\nu_{g,g} = \nu_{f,f} + 2\mre \nu_{f,h} + \nu_{h,h}.$$
	Hence,
\begin{equation} \label{eq:resolvconv}
y\langle (K-(\lambda + iy))^{-1} g, g \rangle_{\mathcal{E}'} = \int_{-1}^1 \frac{y \, d\nu_{g,g}(x)}{x-(\lambda+iy)} \to i\nu_{g,g}(\{\lambda\}) \neq 0,
\end{equation}
	as $y \to 0^+$, by the dominated convergence theorem.
	On the other hand, if $\lambda \notin \{\rho_n\}$, then
	$$(K-(\lambda + iy))^{-1} g = T_{\lambda+iy}^{-1} g \to T^{-1}_\lambda g, \quad y \to 0^+,$$
	with convergence in $\mathbb{C} \oplus H^1(\Gamma)$, and therefore in $L^2(\Gamma)$. Thus, since $S^{-1}g \in L^2(\Gamma)$,
$$
	\langle (K-(\lambda + iy))^{-1} g, g \rangle_{\mathcal{E}'} \to \langle T^{-1}_\lambda g, S^{-1}g \rangle_{L^2(\Gamma)},
$$
as $y\to0^+$, a contradiction. Therefore $\lambda \in \{\rho_n\}$.

Conversely, suppose that $\lambda = \rho_n$ for some $n$. Then $T^{-1}_{\lambda'}$ has a pole at $\lambda' = \lambda$ of some order $n \geq 1$, and thus we can arrange $f, h \in H^1(\Gamma)$ such that
$$\mim \langle y^n(K-(\lambda + iy))^{-1} f, h \rangle_{\mathcal{E}'} \geq 1$$
for $y > 0$ small. Since 
$$\mre \nu_{g,h}(\{\lambda\}) = \lim_{y\to0^+} \mim \langle y(K-(\lambda + iy))^{-1} f, h \rangle_{\mathcal{E}'}$$
it must be that $n=1$ and $E(\{\lambda\}) \neq 0$, that is, $\lambda$ is an eigenvalue of $K \colon H^{1/2}(\Gamma) \to H^{1/2}(\Gamma)$. 

The formula for $\nu_{g,g}(\{\rho_n\})$ now follows by comparing \eqref{eq:resolvconv} with the fact that
$$y\langle (K-(\lambda + iy))^{-1} g, g \rangle_{\mathcal{E}'} = - i \langle \Res_{\lambda = \rho_n} T^{-1}_{\lambda}g, S^{-1}g \rangle_{L^2(\Gamma)} + O(y).$$
The more general formula \eqref{eq:resformula} follows by a polarization argument.

An identical argument works for the eigenvalues in $(-(1-\alpha/\pi),0)$. The fact that $\{\rho_n'\} = \{-\rho_n\}$ then follows by the symmetry of the spectrum described in Subsection~\ref{subsec:spec}.
\end{proof}
\begin{remark}
	Of course the choice to define $q_{\lambda, \alpha}$ to agree with the decomposition of Theorem~\ref{thm:Gammaanalysis} for $\mim \lambda > 0$, rather than $\mim \lambda < 0$, was arbitrary. We could instead have let
	$$(\tilde{q}_{\lambda, \alpha})_{\pm}(s) = \phi(s)s^{\mu_\alpha(\lambda)}, \quad (\tilde{q}_{\lambda, \alpha})_{c} \equiv 0, \quad \lambda \in \widetilde{\Sigma}_{1,\alpha} \setminus \gamma_{1,\alpha},$$
	and
	$$(\tilde{q}_{\lambda, \alpha})_{\pm}(s) = \pm \phi(s)s^{-\mu_\alpha(-\lambda)}, \quad (\tilde{q}_{\lambda, \alpha})_{c} \equiv 0, \quad \lambda \in -(\widetilde{\Sigma}_{1,\alpha} \setminus \gamma_{1,\alpha}).$$
	Letting
	$$\widetilde{T}_\lambda(c, f) = (K-\lambda)(c\tilde{q}_{\lambda, \alpha} + f),$$
	the same proof yields that inside $-(\widetilde{\Sigma}_{1,\alpha} \setminus \gamma_{1,\alpha}) \cup (\widetilde{\Sigma}_{1,\alpha} \setminus \gamma_{1,\alpha})$, $\lambda \to \widetilde{T}_\lambda^{-1}$ has a meromorphic extension from $\mim \lambda < 0$ to $\mim \lambda > 0$. The real poles coincide with the eigenvalues of $K \colon H^{1/2}(\Gamma) \to H^{1/2}(\Gamma)$.
\end{remark}
We have now essentially proven Theorem~\ref{thm:main1} and Theorem~\ref{thm:main2}. It is obvious how to modify the construction for $\pi < \alpha < 2\pi$ based on the remark after Theorem~\ref{thm:Gammaanalysis}. 
\begin{proof}[Proof of Theorems~\ref{thm:main1} and \ref{thm:main2}]
The desired conclusions are immediate from Theorem~\ref{thm:Tinvert} and Lemma~\ref{lem:eigenvalues}, with $H_\lambda = T_\lambda^{-1}$. It only remains to note that the eigenvalues of $K\colon H^{1/2}(\Gamma) \to H^{1/2}(\Gamma)$ that lie outside $[-|1-\alpha/\pi|, |1-\alpha/\pi|]$ can only accumulate at $\pm |1-\alpha/\pi|$, since we know already that $\sigma_{\ess}(K, H^{1/2}(\Gamma)) = [-|1-\alpha/\pi|, |1-\alpha/\pi|]$.
\end{proof}

We now prove Theorem~\ref{thm:main3}, except for the statement on multiplicity. Let $\{\lambda_n\}$ denote the eigenvalues of $K \colon H^{1/2}(\Gamma) \to H^{1/2}(\Gamma)$.
\begin{theorem} \label{thm:spectrum1}
The absolutely continuous part of the spectrum of $K \colon H^{1/2}(\Gamma) \to H^{1/2}(\Gamma)$ is given by
$$\sigma_{\ac}(K) = [-|1-\alpha/\pi|, |1-\alpha/\pi|].$$
There is no singular continuous spectrum. 

Furthermore, if $g,h \in H^1(\Gamma)$, then the spectral measure 
$$\nu_{g,h} = \tau \, d\lambda + \sum_{n} \tau_n \delta_{\lambda_n}$$
has a continuous density $\tau$ which is real analytic in $(-|1-\alpha/\pi|, 0)$ and $(0, |1-\alpha/\pi|)$.
\end{theorem}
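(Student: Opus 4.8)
The plan is to realize the spectral density via Stone's formula, reading off the boundary values of the resolvent from the limiting absorption operators $H_\lambda = T_\lambda^{-1}$ (Theorem~\ref{thm:Tinvert}) and their lower half-plane counterparts $\widetilde H_\lambda = \widetilde T_\lambda^{-1}$ (the remark following Lemma~\ref{lem:eigenvalues}). Throughout I take $0<\alpha<\pi$; the case $\pi<\alpha<2\pi$ is handled by the stated substitution $\mu_\alpha(\lambda)\leftrightarrow-\mu_{2\pi-\alpha}(-\lambda)$, and the interval $(-(1-\alpha/\pi),0)$ is treated symmetrically using the $-\widetilde\Sigma_{1,\alpha}$ half of Theorem~\ref{thm:Tinvert}. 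Fix $g,h\in H^1(\Gamma)$. By Theorem~\ref{thm:Gammaanalysis}, and the fact that the exponents $-\sgn(\mim\lambda)\mu_\alpha(\lambda)$ there are exactly those defining $q_{\lambda,\alpha}$ and $\tilde q_{\lambda,\alpha}$, we have $(K-\lambda)^{-1}g = H_\lambda g$ for $\mim\lambda>0$ and $(K-\lambda)^{-1}g = \widetilde H_\lambda g$ for $\mim\lambda<0$, identifying $(c,f)$ with $cq_{\lambda,\alpha}+f$, resp. $c\tilde q_{\lambda,\alpha}+f$, in $L^2(\Gamma)$. Let $u\in(0,1-\alpha/\pi)$ not be an eigenvalue of $K$, and choose $\delta>0$ with $[u-\delta,u+\delta]\subset(0,1-\alpha/\pi)$ containing no eigenvalue. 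On a complex neighbourhood of $[u-\delta,u+\delta]$ the maps $\lambda\mapsto T_\lambda^{-1},\widetilde T_\lambda^{-1}\colon H^1(\Gamma)\to\mathbb C\oplus H^1(\Gamma)$ are analytic, hence uniformly bounded, while $\lambda\mapsto q_{\lambda,\alpha},\tilde q_{\lambda,\alpha}\in L^2(\Gamma)$ are continuous (their singular parts have exponent of real part near $0$, so dominated convergence applies). Since $\langle\cdot,h\rangle_{\mathcal{E}'} = \langle\cdot,S^{-1}h\rangle_{L^2(\Gamma)}$ with $S^{-1}h\in L^2(\Gamma)$, the functions $v\mapsto\langle(K-(v\pm i\varepsilon))^{-1}g,h\rangle_{\mathcal{E}'}$ converge, uniformly for $v\in(u-\delta,u+\delta)$ and $\varepsilon\in(0,1)$, to $\langle H_v g,h\rangle_{\mathcal{E}'}$, resp. $\langle\widetilde H_v g,h\rangle_{\mathcal{E}'}$. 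Stone's formula and dominated convergence then give
\[
\nu_{g,h}\bigl((a,b)\bigr) = \int_a^b\tau(v)\,dv,\qquad \tau(v):=\frac{1}{2\pi i}\bigl\langle(H_v-\widetilde H_v)g,h\bigr\rangle_{\mathcal{E}'},\qquad [a,b]\subset(u-\delta,u+\delta),
\]
so $\nu_{g,h}$, restricted to $(0,1-\alpha/\pi)\setminus\sigma_{p}(K)$, is absolutely continuous with continuous density $\tau$.

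Next I would show that $\tau$ continues real-analytically across the embedded eigenvalues $\rho_n\in(0,1-\alpha/\pi)$. On $\widetilde\Sigma_{1,\alpha}\setminus\gamma_{1,\alpha}$ both $\langle H_\lambda g,h\rangle_{\mathcal{E}'}$ and $\langle\widetilde H_\lambda g,h\rangle_{\mathcal{E}'}$ are meromorphic, and on the real interval $(0,1-\alpha/\pi)$ their only poles are the simple poles at the $\rho_n$ (any other poles lying off the real axis). By Lemma~\ref{lem:eigenvalues}, $\Res_{\lambda=\rho_n}\langle H_\lambda g,h\rangle_{\mathcal{E}'} = -\nu_{g,h}(\{\rho_n\})$. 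For $\widetilde H_\lambda$, self-adjointness of $K$ in the $\mathcal{E}'$-inner product gives $\langle(K-\bar\lambda)^{-1}g,h\rangle_{\mathcal{E}'} = \overline{\langle(K-\lambda)^{-1}h,g\rangle_{\mathcal{E}'}}$, and meromorphic continuation turns this into $\langle\widetilde H_\lambda g,h\rangle_{\mathcal{E}'} = \overline{\langle H_{\bar\lambda}h,g\rangle_{\mathcal{E}'}}$; hence $\Res_{\lambda=\rho_n}\langle\widetilde H_\lambda g,h\rangle_{\mathcal{E}'} = -\overline{\nu_{h,g}(\{\rho_n\})} = -\nu_{g,h}(\{\rho_n\})$, the last equality because $E(\{\rho_n\})$ is an orthogonal projection. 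Thus the polar parts of $H_\lambda$ and $\widetilde H_\lambda$ cancel, $\langle(H_\lambda-\widetilde H_\lambda)g,h\rangle_{\mathcal{E}'}$ is holomorphic on a complex neighbourhood of $(0,1-\alpha/\pi)$, and $\tau$ is real-analytic there. Since eigenvalues accumulate only at $0$ and $1-\alpha/\pi$, every compact subinterval of $(0,1-\alpha/\pi)$ contains finitely many, so, combining with the first paragraph, $\nu_{g,h}$ restricted to $(0,1-\alpha/\pi)$ equals $\tau\,dv+\sum_n\nu_{g,h}(\{\rho_n\})\delta_{\rho_n}$, and likewise on $(-(1-\alpha/\pi),0)$.

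Absolute continuity in the essential spectrum is then a book-keeping step. On $[-1,1]\setminus[-|1-\alpha/\pi|,|1-\alpha/\pi|]$ the spectrum is purely discrete, since $\sigma_{\ess}(K,H^{1/2}(\Gamma)) = [-|1-\alpha/\pi|,|1-\alpha/\pi|]$, so $\nu_{g,h}$ is purely atomic there; on $(-(1-\alpha/\pi),0)\cup(0,1-\alpha/\pi)$ it has no singular continuous part by the previous two paragraphs, a measure carried by the countable set $\sigma_{p}(K)$ being atomic; and it has no singular continuous mass at $\{0,\pm|1-\alpha/\pi|\}$ because singular continuous measures are non-atomic. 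Hence the singular continuous part of $\nu_{g,h}$ vanishes for all $g,h\in H^1(\Gamma)$; as $H^1(\Gamma)$ is dense in $H^{1/2}(\Gamma)=\mathcal{E}'$ and the singular continuous subspace is closed and $K$-reducing, $K$ has no singular continuous spectrum. Finally $\sigma_{\ac}(K)\subseteq\sigma_{\ess}(K)$ always; conversely, any non-eigenvalue $u\in(0,1-\alpha/\pi)$ lies in $\sigma(K)$, is not an accumulation point of eigenvalues, and has $E(\{u\})=0$, so, since $\sigma_{sc}(K)=\emptyset$, the projection $E_{\ac}$ cannot vanish on all neighbourhoods of $u$, i.e.\ $u\in\sigma_{\ac}(K)$. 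Such $u$ are dense in $(0,1-\alpha/\pi)$, hence, and via the unitary $V$ with $V^{-1}KV=-K$ also on $(-(1-\alpha/\pi),0)$, we obtain $\sigma_{\ac}(K)=[-|1-\alpha/\pi|,|1-\alpha/\pi|]$.

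The main obstacle is the second step: producing a density that is continuous, in fact analytic, \emph{across} the embedded eigenvalues rather than merely away from them. This is exactly where one must use the self-adjointness of $K$ in the $\mathcal{E}'$-pairing, through the conjugation identity $\langle\widetilde H_\lambda g,h\rangle_{\mathcal{E}'}=\overline{\langle H_{\bar\lambda}h,g\rangle_{\mathcal{E}'}}$ together with the Hermitian symmetry $\nu_{g,h}(\{\rho_n\})=\overline{\nu_{h,g}(\{\rho_n\})}$, to conclude that the residues of $H_\lambda$ and $\widetilde H_\lambda$ at each $\rho_n$ coincide and the singular parts of $H_u-\widetilde H_u$ therefore cancel. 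Everything else is a routine combination of Stone's formula, the closed graph and analytic Fredholm theorems already invoked, and the known location of $\sigma_{\ess}(K)$.
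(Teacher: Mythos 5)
Your argument is correct, and it reaches the theorem by a genuinely different route than the paper. You compute the density directly from Stone's formula as $\tau(v)=(2\pi i)^{-1}\langle (H_v-\widetilde H_v)g,h\rangle_{\mathcal{E}'}$, using both boundary values $T_\lambda^{-1}$ and $\widetilde T_\lambda^{-1}$, and you obtain analyticity across the embedded eigenvalues by showing the simple poles of $\langle H_\lambda g,h\rangle_{\mathcal{E}'}$ and $\langle \widetilde H_\lambda g,h\rangle_{\mathcal{E}'}$ at each $\rho_n$ have the same residue $-\nu_{g,h}(\{\rho_n\})$, via the conjugation identity $\langle \widetilde H_\lambda g,h\rangle_{\mathcal{E}'}=\overline{\langle H_{\bar\lambda}h,g\rangle_{\mathcal{E}'}}$ and the Hermitian symmetry of the atoms; absence of singular continuous spectrum then falls out of the explicit absolutely continuous representation plus density of $H^1(\Gamma)$ in $\mathcal{E}'$. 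The paper instead proves the absence of singular continuous spectrum first, by contradiction, using only the upper half-plane boundary values: de la Vall\'ee-Poussin's theorem says the singular part lives where $\mim\langle (K-(\lambda+iy))^{-1}g,g\rangle_{\mathcal{E}'}$ blows up, while $\langle T_\lambda^{-1}g,S^{-1}g\rangle_{L^2(\Gamma)}$ is finite at every non-eigenvalue point of a putative null carrier; the density and its real analyticity are then extracted by Fatou's theorem after subtracting the poles through \eqref{eq:resformula}, rather than by your two-sided cancellation. What your route buys is the explicit two-sided formula for the density — essentially the operator $S_\lambda=(2\pi i)^{-1}(T_\lambda^{-1}-\widetilde T_\lambda^{-1})$ that the paper only introduces in Section~\ref{sec:spectralres} to prove multiplicity $1$ — at the cost of needing the meromorphy of $\widetilde T_\lambda^{-1}$ through the interval and the residue-cancellation argument; the paper's route for this theorem is lighter, resting on one-sided limits and classical boundary-value theory of Herglotz functions. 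Two small points to tidy: when you assert analyticity of $T_\lambda^{-1}$ and $\widetilde T_\lambda^{-1}$ on a complex neighbourhood of $[u-\delta,u+\delta]$ you should shrink the neighbourhood to exclude possible non-real poles of the meromorphic continuations (they are isolated in $\widetilde\Sigma_{1,\alpha}\setminus\gamma_{1,\alpha}$, so this is harmless), and the identification $(K-\lambda)^{-1}g=H_\lambda g$ resp. $\widetilde H_\lambda g$ uses the uniqueness of the decomposition, i.e.\ $q_{\lambda,\alpha},\tilde q_{\lambda,\alpha}\notin H^1(\Gamma)$, which is worth saying explicitly.
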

\begin{proof}
As always, we may restrict ourselves to the case when $0 < \alpha < \pi$. 
Since we know that $\sigma_{\ess}(K, H^{1/2}(\Gamma)) = [-(1-\alpha/\pi), 1-\alpha/\pi]$ and that the eigenvalues $\{\lambda_n\}$ can only accumulate to $0$ and $\pm (1-\alpha/\pi)$, it is sufficient to prove that there is no singular continuous spectrum. Suppose to the contrary that $K$ had some singular continuous part in its spectrum. Then there would exist a closed Lebesgue null set $E$ with
$$E \subset (-(1-\alpha/\pi),0)\cup(0, 1-\alpha/\pi), \quad E \cap \{\lambda_n\} = \emptyset,$$ and $f \in H^{1/2}(\Gamma)$, $\|f\|_{\mathcal{E}'} = 1$, such that $\supp \nu_{f,f} \subset E$ and $\nu_{f,f}(E) = 1$. By density, there is $g \in H^1(\Gamma)$ such that $g = f + h$, where $\|h\|_{\mathcal{E}'} < 1/4$. Then
$$\nu_{g,g}(E) = (\nu_{f,f} + 2\mre \nu_{f,h} + \nu_{h,h})(E) \geq 1/2.$$
By inner regularity there is thus a closed Lebesgue null set $F \subset E$ such that $\nu_{g,g}(F) > 0$. However, by the theorem of de la Vall\'ee-Poussin, the singular part of $\nu_{g,g}$ is supported on the set of $\lambda \in [-1,1]$ such that
$$\mim \, \langle (K-(\lambda + iy))^{-1} g, g \rangle_{\mathcal{E}'} = \mim \int_{-1}^1 \frac{d\nu_{g,g}(x)}{x-(\lambda+iy)} = \int_{-1}^1 \frac{y \, d\nu_{g,g}(x)}{(x-\lambda)^2 + y^2} \to \infty,$$
as $y \to 0^+$. However, if $\lambda \in F$, then, as in the proof of Lemma~\ref{lem:eigenvalues},
$$\langle (K-(\lambda + iy))^{-1} g, g \rangle_{\mathcal{E}'} \to \langle T^{-1}_\lambda g, S^{-1}g \rangle_{L^2(\Gamma)},$$
as $y \to 0^+$. This is a contradiction, and we conclude that $K$ has absolutely continuous spectrum outside of its eigenvalues.

If $g \in H^1(\Gamma)$, we now know that the spectral measure may be decomposed as 
$$\nu_{g,g} = \tau \, d\lambda + \sum_n \tau_n \delta_{\lambda_n},$$
where $\sum_n \tau_n = \sum_n \nu_{g,g}(\{\lambda_n\}) < \infty$ and $\int_{-1}^1 \tau \, d\lambda < \infty$. Then by Fatou's theorem and  \eqref{eq:resformula}, 
\begin{align*}
\pi \tau(\lambda) &= \lim_{y\to0^+} \int_{-1}^1 \frac{y \, \tau(x) \, dx}{(x-\lambda)^2 + y^2} \\ 
&= \lim_{y\to0^+} \left(\int_{-1}^1 \frac{y \, d\nu_{g,g}(x)}{(x-\lambda)^2 + y^2} + \mim\sum_{\lambda_n \in (0, 1-\alpha/\pi)} \frac{\nu_{g,g}(\{\lambda_n\})}{(\lambda + iy) - \lambda_n}\right) 
\\&=  \lim_{y\to0^+} \mim\left( \, \langle (K-(\lambda + iy))^{-1} g, g \rangle_{\mathcal{E}'} - \sum \frac{\Res_{\lambda = \lambda_n} \langle T^{-1}_{\lambda}g, S^{-1}g \rangle_{L^2(\Gamma)}}{(\lambda+iy) - \lambda_n}\right) \\ &= \mim \left( \, \langle T^{-1}_\lambda g, S^{-1}g \rangle_{L^2(\Gamma)} - \sum_{\lambda_n \in (0, 1-\alpha/\pi)} \frac{\Res_{\lambda = \lambda_n} \langle T^{-1}_{\lambda}g, S^{-1}g \rangle_{L^2(\Gamma)}}{\lambda - \lambda_n}\right)
\end{align*}
for almost every $\lambda$. On the right-hand side we have subtracted off every pole from $T_\lambda^{-1}g$ in $(0,1-\alpha/\pi)$, showing that $\tau(\lambda)$ is indeed real analytic in this interval. A similar argument holds for $(-(1-\alpha/\pi), 0)$. For the general case of a spectral measure $\nu_{g,h}$, $g,h \in H^1(\Gamma)$, we employ the polarization identity.
\end{proof}

\section{The spectral resolution} \label{sec:spectralres}
To prove that the a.c. spectrum has multiplicity 1, and thereby finish the proof of Theorem~\ref{thm:main3}, we will in this section construct the diagonalization of $K \colon H^{1/2}(\Gamma) \to H^{1/2}(\Gamma)$ somewhat explicitly. We will assume that $0 < \alpha < \pi$, the case of $\pi < \alpha < 2\pi$ being analogous. 
\begin{lemma} \label{lem:eigspacedim}
Suppose that $\lambda \in (-(1-\alpha/\pi),0)\cup(0, 1-\alpha/\pi)$ is not an eigenvalue of $K \colon H^{1/2}(\Gamma) \to H^{1/2}(\Gamma)$. Then $K-\lambda \colon L^2(\Gamma) \to L^2(\Gamma)$ is Fredholm, has full range, and
$$\dim \ker (K-\lambda)|_{L^2(\Gamma)} = 1.$$
\end{lemma}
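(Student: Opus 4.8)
The plan is to transfer the Fredholm theory of $K$ on $H^1(\Gamma)$, which is already established, over to $L^2(\Gamma)$ by means of the single layer potential. Assume first that $\lambda \in (0, 1-\alpha/\pi)$; the case $\lambda \in (-(1-\alpha/\pi),0)$ is entirely analogous and is commented on at the end. The first step is to show that $K-\lambda \colon H^1(\Gamma) \to H^1(\Gamma)$ is Fredholm of index $-1$. It is Fredholm by Theorem~\ref{thm:fredholm}: $\lambda \in \widetilde{\Sigma}_{1,\alpha}$ gives $\lambda \notin \Sigma_{1,\alpha}$, $\lambda > 0$ gives $\lambda \notin -\Sigma_{1,\alpha}$, and $\lambda \neq \pm(1-\alpha/\pi)$. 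Since $\lambda$ is not an eigenvalue of $K \colon H^{1/2}(\Gamma) \to H^{1/2}(\Gamma)$, Lemma~\ref{lem:eigenvalues} gives $\lambda \notin \{\rho_n\}$, so by Theorem~\ref{thm:Tinvert} the operator $T_\lambda \colon \mathbb{C}\oplus H^1(\Gamma) \to H^1(\Gamma)$, $T_\lambda(c,f) = (K-\lambda)(cq_{\lambda,\alpha}+f)$, is invertible. Applying its injectivity to pairs $(0,f)$ yields $\ker (K-\lambda)|_{H^1(\Gamma)} = \{0\}$, and applying it to pairs $(1,f)$ yields $(K-\lambda)q_{\lambda,\alpha} \notin (K-\lambda)H^1(\Gamma)$; since $T_\lambda$ is also surjective, $H^1(\Gamma) = \mathbb{C}(K-\lambda)q_{\lambda,\alpha} + (K-\lambda)H^1(\Gamma)$. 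Hence the cokernel of $K-\lambda$ on $H^1(\Gamma)$ is one-dimensional and its index is $-1$.

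The second step transports this to $L^2(\Gamma)$. Since $S \colon H^{-1}(\Gamma) \to L^2(\Gamma)$ is an isomorphism and the Plemelj relation $KS = SK^\ast$ holds as an identity of operators $H^{-1}(\Gamma) \to L^2(\Gamma)$ (both sides agreeing with the classical identity on the dense subspace $L^2(\Gamma) \subset H^{-1}(\Gamma)$), we have $S^{-1}(K-\lambda)S = K^\ast - \lambda$ on $H^{-1}(\Gamma)$; that is, $K-\lambda \colon L^2(\Gamma) \to L^2(\Gamma)$ is similar to $K^\ast - \lambda \colon H^{-1}(\Gamma) \to H^{-1}(\Gamma)$. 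The latter is, via the duality $H^{-1}(\Gamma) = H^1(\Gamma)'$, the adjoint of $K-\lambda \colon H^1(\Gamma) \to H^1(\Gamma)$, hence Fredholm of index $-(-1) = 1$. Similarity preserves the index, so $K-\lambda \colon L^2(\Gamma) \to L^2(\Gamma)$ is Fredholm of index $1$, and $\dim \ker(K-\lambda)|_{L^2(\Gamma)} = 1 + \dim \operatorname{coker}(K-\lambda)|_{L^2(\Gamma)}$.

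It remains to see that $K-\lambda \colon L^2(\Gamma) \to L^2(\Gamma)$ has full range. As $q_{\lambda,\alpha} \in L^2(\Gamma)$, the range of $K-\lambda$ on $L^2(\Gamma)$ contains $(K-\lambda)\bigl(\mathbb{C}q_{\lambda,\alpha} + H^1(\Gamma)\bigr)$, which is the range of $T_\lambda$, i.e. all of $H^1(\Gamma)$. Since $H^1(\Gamma)$ is dense in $L^2(\Gamma)$ and the range of $K-\lambda$ on $L^2(\Gamma)$ is closed by Fredholmness, it equals $L^2(\Gamma)$. Thus the cokernel is trivial and $\dim\ker(K-\lambda)|_{L^2(\Gamma)} = 1$. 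For $\lambda \in (-(1-\alpha/\pi),0)$ one argues identically, using the version of Theorem~\ref{thm:Tinvert} for $-(\widetilde{\Sigma}_{1,\alpha}\setminus\gamma_{1,\alpha})$ and the corresponding singular functions $q_{\lambda,\alpha}$, or else invoking the unitary equivalence of $K$ and $-K$ on $\mathcal{E}'/\mathbb{C}$ from Subsection~\ref{subsec:spec}.

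The one point requiring care is the index bookkeeping in the second step — tracking the sign through the passage from $H^1(\Gamma)$ to $H^{-1}(\Gamma)$ by duality and from $H^{-1}(\Gamma)$ to $L^2(\Gamma)$ by similarity — together with the routine check that the Plemelj identity is valid on these spaces. A more hands-on alternative avoids duality: extend the parametrix of Theorem~\ref{thm:fredholm} to $\eta = 0$, with $(K_\alpha \pm \lambda)^{-1}$ taken as inverses on $L^2(\mathbb{R}_+)$ (which exist since $\pm\lambda \notin \Sigma_{1,\alpha}$), to obtain Fredholmness directly, and then analyze $\ker(K-\lambda)|_{L^2(\Gamma)}$ by a Mellin argument near the corner; since $\lambda$ is real, both roots $\pm\mu_\alpha(\lambda)$ of $\widehat{K}_\alpha(z) = \lambda$ lie on $\mre z = 0$, and one finds every kernel element to be of the form $a\,q_{\lambda,\alpha} + b\,\tilde q_{\lambda,\alpha} + h$ with $h \in H^1(\Gamma)$, after which invertibility of $T_\lambda$ forces dimension $1$.
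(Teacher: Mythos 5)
Your proposal is correct and follows essentially the same route as the paper: invertibility of $T_\lambda$ (via Theorem~\ref{thm:Tinvert} and Lemma~\ref{lem:eigenvalues}) gives trivial kernel and one-dimensional cokernel for $K-\lambda$ on $H^1(\Gamma)$, duality passes this to $K^\ast-\lambda$ on $H^{-1}(\Gamma)$, and the Plemelj formula together with the isomorphism $S \colon H^{-1}(\Gamma) \to L^2(\Gamma)$ transfers the conclusion to $L^2(\Gamma)$. Your only deviations are cosmetic: you obtain full range on $L^2(\Gamma)$ by a density-plus-closed-range argument rather than reading it off directly from the duality, and the Mellin-based alternative you sketch at the end is not needed.
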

\begin{proof}
Since $T_\lambda \colon \mathbb{C} \oplus H^1(\Gamma) \to H^1(\Gamma)$ is invertible, we know that 
$$\dim \ker (K-\lambda)|_{H^1(\Gamma)} = 0, \quad \codim (K-\lambda)H^1(\Gamma) = 1.$$
Thus the Fredholm operator $K^\ast-\lambda \colon H^{-1}(\Gamma) \to H^{-1}(\Gamma)$ has full range and
$$\dim \ker (K^\ast-\lambda)|_{H^{-1}(\Gamma)} = 1.$$
The result follows, since $S \colon H^{-1}(\Gamma) \to L^2(\Gamma)$ is an isomorphism, and
$$(K-\lambda) =  S(K^\ast - \lambda)S^{-1} \colon L^2(\Gamma) \to L^2(\Gamma),$$
by the Plemelj formula.
\end{proof}
\begin{theorem} \label{thm:spectrum2}
	Let $I = (-(1-\alpha/\pi), 1-\alpha/\pi)$, and let $\{\lambda_n\}_{n \in N}$ denote the eigenvalues of $K \colon H^{1/2}(\Gamma) \to H^{1/2}(\Gamma)$, indexed by a countable set $N$, and let $\{e_n\}_{n \in N}$ denote a corresponding set of eigenvectors, orthonormal in the $\mathcal{E'}$-norm. There is a function $e \colon I \to L^2(\Gamma)$ satisfying the following: 
	\begin{itemize} 
		\item For every $\lambda \in I$, $\lambda \notin \{\lambda_n\} \cup \{0\}$, we have that $e(\lambda) \in \ker (K-\lambda)|_{L^2(\Gamma)}$. 
		\item If we for $f \in H^1(\Gamma)$ let
		$$Uf(\lambda) = \langle f, e(\lambda) \rangle_{\mathcal{E}'}, \quad Uf(n) = \langle f, e_n \rangle_{\mathcal{E}'}, \quad \lambda \in I, \; n\in N,$$
		then $U$ extends to a unitary map $U \colon H^{1/2}(\Gamma) \to L^2(I) \oplus \ell^2(N)$.
		\item Furthermore, $U$ diagonalizes $K \colon H^{1/2}(\Gamma)\to H^{1/2}(\Gamma)$:
		$$U K U^{-1} p(\lambda) = \lambda p(\lambda), \quad U K U^{-1} p(n) = \lambda_n p(n),$$
		for $p \in L^2(I) \oplus \ell^2(N)$, $\lambda \in I$, and $n \in N$.
	\end{itemize}
In particular, the absolutely continuous spectrum of $K \colon H^{1/2}(\Gamma) \to H^{1/2}(\Gamma)$ has multiplicity $1$.
\end{theorem}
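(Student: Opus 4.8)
The plan is to build the diagonalization of $K$ from Stone's formula, using the two meromorphic families $T_\lambda^{-1}$ and $\widetilde{T}_\lambda^{-1}$ of Theorem~\ref{thm:Tinvert} and the Remark following Lemma~\ref{lem:eigenvalues} as the two boundary values of the resolvent on the interval $I$. Fix $u \in I$ that is neither an eigenvalue of $K \colon H^{1/2}(\Gamma) \to H^{1/2}(\Gamma)$ nor $0$. Since the eigenvalues are discrete, there are no poles near $u$, so for $g \in H^1(\Gamma)$ one has $T_u^{-1}g = \lim_{y \to 0^+}(K-(u+iy))^{-1}g$ and $\widetilde{T}_u^{-1}g = \lim_{y\to 0^+}(K-(u-iy))^{-1}g$ in $L^2(\Gamma)$, just as in the proof of Lemma~\ref{lem:eigenvalues}. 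Since $K$ is self-adjoint on $\mathcal{E}'$, the resolvent obeys $(K-\bar z)^{-1} = \bigl((K-z)^{-1}\bigr)^{\ast}$, so these two boundary values become complex conjugates of each other when paired against $g$; feeding this into Theorem~\ref{thm:spectrum1} (whose discrete correction terms vanish near $u$), the absolutely continuous density is
$$2\pi i\, \tau_{g,h}(u) = \langle (T_u^{-1} - \widetilde{T}_u^{-1}) g,\, S^{-1} h \rangle_{L^2(\Gamma)}, \qquad g, h \in H^1(\Gamma).$$
The crucial point is that $(K-u)T_u^{-1} g = (K-u)\widetilde{T}_u^{-1} g = g$ in $L^2(\Gamma)$, by \eqref{eq:Tinverse} and its analogue for $\widetilde{T}$, so $(T_u^{-1} - \widetilde{T}_u^{-1})g$ lies in $\ker(K-u)|_{L^2(\Gamma)}$, which is one-dimensional by Lemma~\ref{lem:eigspacedim}. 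Thus the jump is a rank-at-most-one map $H^1(\Gamma) \to L^2(\Gamma)$: fixing an $L^2$-unit vector $v(u)$ spanning $\ker(K-u)|_{L^2(\Gamma)}$, we get $(T_u^{-1} - \widetilde{T}_u^{-1})g = \omega_u(g)\, v(u)$ for a linear functional $\omega_u$.

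Next I would pin down $\omega_u$ using the structure of the densities from Section~\ref{sec:spectralres}. The Hermitian symmetry $\tau_{h,g} = \overline{\tau_{g,h}}$ and the positivity $\tau_{g,g} \geq 0$, inserted into the displayed identity, force $\omega_u$ to be a nonnegative multiple $2\pi i\, a(u)$ of the functional $g \mapsto \langle S^{-1}g,\, v(u)\rangle_{L^2(\Gamma)}$. Setting $e(u) = \sqrt{a(u)}\, v(u)$ — and extending $e$ by a measurable selection on the discrete exceptional set where this recipe does not directly apply, and by $0$ on $\{\lambda_n\} \cup \{0\}$ — one obtains, for a.e.\ $\lambda \in I$ and all $g,h \in H^1(\Gamma)$,
$$\tau_{g,h}(\lambda) = \langle S^{-1} g,\, e(\lambda) \rangle_{L^2(\Gamma)}\; \overline{\langle S^{-1} h,\, e(\lambda) \rangle_{L^2(\Gamma)}};$$
measurability, indeed real-analyticity off a discrete set, of $\lambda \mapsto e(\lambda)$ comes from that of $T_\lambda^{-1}$ and $\widetilde{T}_\lambda^{-1}$. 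Since there is no singular continuous spectrum (Theorem~\ref{thm:spectrum1}), the spectral measure decomposes as $\nu_{g,h} = \langle S^{-1} g, e(\cdot)\rangle_{L^2(\Gamma)}\,\overline{\langle S^{-1}h, e(\cdot)\rangle_{L^2(\Gamma)}}\, d\lambda + \sum_n \langle g, e_n\rangle_{\mathcal{E}'}\,\overline{\langle h, e_n\rangle_{\mathcal{E}'}}\,\delta_{\lambda_n}$. Evaluating on $[-1,1]$ gives $\langle g, h\rangle_{\mathcal{E}'} = \langle Ug, Uh\rangle_{L^2(I) \oplus \ell^2(N)}$, so $U$ is isometric on the dense subspace $H^1(\Gamma)$ and extends to an isometry $U \colon H^{1/2}(\Gamma) \to L^2(I) \oplus \ell^2(N)$. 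That $U$ intertwines $K$ with multiplication by $\lambda$, respectively by $\lambda_n$, follows from the Plemelj identity $S^{-1} K = K^{\ast} S^{-1}$ together with $K e(\lambda) = \lambda e(\lambda)$ in $L^2(\Gamma)$ and the reality of $\lambda$.

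It then remains to prove that $U$ is onto, and this is where the real content lies. The range $\mathcal{M} = U(H^{1/2}(\Gamma))$ is a closed subspace reducing the block-diagonal multiplication operator on $L^2(I) \oplus \ell^2(N)$; since $U e_n = \delta_n$, it contains $0 \oplus \ell^2(N)$, and therefore $\mathcal{M} \cap (L^2(I) \oplus 0)$ is a reducing subspace for multiplication by $\lambda$ on $L^2(I)$, hence equals $L^2(E)$ for some measurable $E \subseteq I$. To see that $E = I$ up to a null set, I would use that $\sigma_{\ac}(K) \supseteq I$ — known from $\sigma_{\ess}(K, H^{1/2}(\Gamma)) = [-|1-\alpha/\pi|, |1-\alpha/\pi|]$, the discreteness of the eigenvalues, and the absence of singular continuous spectrum from Theorem~\ref{thm:spectrum1}. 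This forces, for each of the two components $(0, 1-\alpha/\pi)$ and $(-(1-\alpha/\pi), 0)$, the existence of $g \in H^1(\Gamma)$ with $\tau_{g,g}$ not identically zero there; being real analytic, such a density is nonzero a.e., so $Ug$ restricted to that half-interval is a cyclic vector for multiplication by $\lambda$, which together with the $L^\infty$-invariance of $L^2(E)$ yields the whole half-interval. Hence $L^2(I) \oplus 0 \subseteq \mathcal{M}$, so $\mathcal{M} = L^2(I) \oplus \ell^2(N)$ and $U$ is unitary; in particular $K$ restricted to its absolutely continuous subspace is unitarily equivalent to multiplication by the identity on $L^2(I)$, which has multiplicity $1$.

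The step I expect to be the main obstacle is the second paragraph — turning the rank-at-most-one jump into an honest positive rank-one form, normalizing $e(\lambda)$, carrying out the measurable selection across the discrete exceptional set, and keeping the $\mathcal{E}'$- and $L^2(\Gamma)$-pairings straight — together with the surjectivity argument, which leans essentially on the positivity and real analyticity of the densities established in Section~\ref{sec:spectralres} and on the a priori knowledge that the absolutely continuous spectrum fills $I$.
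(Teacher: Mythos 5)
Your proposal is correct, and its core coincides with the paper's argument: you form the jump $(2\pi i)^{-1}(T_\lambda^{-1}-\widetilde{T}_\lambda^{-1})$ of the two boundary values of the resolvent, observe via \eqref{eq:Tinverse} and Lemma~\ref{lem:eigspacedim} that it maps $H^1(\Gamma)$ into the one-dimensional kernel $\ker(K-\lambda)|_{L^2(\Gamma)}$, use Hermitian symmetry and positivity of the density to upgrade the rank-one jump to the form $Ug(\lambda)\overline{Uh(\lambda)}=\nu_{g,h}'(\lambda)$ with $e(\lambda)=\sqrt{a(\lambda)}\,v(\lambda)$, and read off the isometry and the intertwining exactly as in the paper. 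Where you genuinely diverge is the final surjectivity/multiplicity step: the paper decomposes $K$ on $\mathcal{P}^\perp$ abstractly as $M_\nu\oplus M_{\nu|_{B_2}}\oplus\cdots$ and kills the higher summands by comparing $U$ with the first component via a vector $h$ with $(Yh)_1\equiv 1$, whereas you work entirely on the model side: the range of the isometry $U$ is closed and invariant, hence reducing, for the block multiplication operator; it contains $0\oplus\ell^2(N)$; its $L^2(I)$-part is therefore a reducing subspace of $M_{d\lambda}$ on $L^2(I)$ and so equals $L^2(E)$ (abelian commutant $L^\infty(I)$); and $E=I$ a.e.\ because a suitable $g\in H^1(\Gamma)$ has a real-analytic, not identically vanishing, hence a.e.\ nonzero density on each half-interval. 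This buys a somewhat more self-contained argument (no appeal to the multiplicity decomposition of the absolutely continuous part), at the cost of needing the same nontrivial inputs the paper uses (existence of such a $g$, which follows from $\sigma_{\ac}(K)\supset I$ and density of $H^1(\Gamma)$, plus real-analyticity from Theorem~\ref{thm:spectrum1}). Two points you should make explicit: the claim $Ue_n=(0,\delta_n)$ requires knowing the $L^2(I)$-component of $Ue_n$ vanishes a.e., and since $e_n$ need not lie in $H^1(\Gamma)$ this is obtained, as in the paper's \eqref{eq:polar}, by extending the identity $|Uf(\lambda)|^2=\nu_{f,f}'(\lambda)$ from $H^1(\Gamma)$ to $H^{1/2}(\Gamma)$ by continuity; and the asserted real-analyticity of $\lambda\mapsto e(\lambda)$ is neither needed nor obvious (only measurability of $\lambda\mapsto Ug(\lambda)$, which follows from the density identity), so it is better dropped.
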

\begin{proof}
	For $\lambda \in I$, $\lambda \notin \{\lambda_n\} \cup \{0\}$, let $S_\lambda \colon H^1(\Gamma) \to L^2(\Gamma)$ be defined by
	$$S_\lambda g = (2\pi i)^{-1}(T^{-1}_\lambda - \widetilde{T}_{\lambda}^{-1})g, \quad g \in H^1(\Gamma),$$
	where $\widetilde{T}_{\lambda}$ was defined in the remark after Lemma~\ref{lem:eigenvalues}.
	Then, by \eqref{eq:Tinverse}, $S_\lambda$ maps $H^1(\Gamma)$ into $\ker(K-\lambda)|_{L^2(\Gamma)}$. Thus, by Lemma~\ref{lem:eigspacedim}, there must be $e_\lambda' \in \ker(K-\lambda)|_{L^2(\Gamma)}$ and $e_\lambda'' \in L^2(\Gamma)$ such that
	$$S_\lambda g = \langle g, e_\lambda'' \rangle_{\mathcal{E}'}e_\lambda', \quad g \in H^1(\Gamma).$$
	On the other hand, by Theorem~\ref{thm:spectrum1} the spectral measure $\nu_{g,g}$ has a smooth density at $\lambda$, and therefore
	\begin{align*}
	\langle S_\lambda g, g \rangle_{\mathcal{E'}} &= (2\pi i)^{-1}\lim_{y \to 0^+} \int_{-1}^1 \left( \frac{1}{x-(\lambda + iy)} - \frac{1}{x-(\lambda - iy)} \right) \, d\nu_{g,g}(x) \\
	&= \lim_{y \to 0^+} \frac{1}{\pi}\int_{-1}^1 \frac{y \, d\nu_{g,g}(x)}{(x-\lambda)^2 + y^2} = \nu_{g,g}'(\lambda).
	\end{align*}
	In particular, $\langle g, e_\lambda'' \rangle_{\mathcal{E}'} \langle e_\lambda', g \rangle_{\mathcal{E}'} \geq 0$ for every $g \in H^1(\Gamma)$. Thus there must be a constant $c_\lambda \geq 0$ such that $e''_\lambda = c_\lambda e'_\lambda$. We define
	$$e(\lambda) = \sqrt{c_\lambda} e'_\lambda,$$
	and let $U$ be as in the theorem statement, so that
	$$|Ug(\lambda)|^2 = \langle S_\lambda g, g \rangle = \nu_{g,g}'(\lambda), \quad g \in H^1(\Gamma), \; \lambda \in I \setminus (\{\lambda_n\} \cup \{0\}).$$
	Then, for $g \in H^1(\Gamma)$ and $\lambda \in I$, $\lambda \notin \{\lambda_n\} \cup \{0\}$, we have that
	$$UKg(\lambda) = \langle Kg, e(\lambda) \rangle_{\mathcal{E}'} = \langle g, Ke(\lambda) \rangle_{\mathcal{E}'} = \lambda Ug(\lambda),  $$
	and, similarly, $UKg(n) = \lambda_n Ug(n)$ for $n \in N$.
	
	To complete the proof, we need to show that $U$ extends to a unitary map of $H^{1/2}(\Gamma)$ onto $L^2(I) \oplus \ell^2(N)$. Observe first that for any $g \in H^1(\Gamma)$,
	\begin{equation} \label{eq:Uisometry}
	\int_I |Ug(\lambda)|^2 \, d\lambda + \sum_{n} |Ug(n)|^2 = \int_{-1}^1 \, d\nu_{g,g} = \|g\|^2_{\mathcal{E}'}.
	\end{equation}
	Thus $U$ extends to an isometry $U \colon H^{1/2}(\Gamma) \to L^2(I) \oplus \ell^2(N)$ and, by continuity, the relationship $|Ug(\lambda)|^2 = \nu_{g,g}'(\lambda)$ continues to be valid for every $g \in H^{1/2}(\Gamma)$, for almost every $\lambda \in I$. By polarization, this also implies that
	\begin{equation} \label{eq:polar}
	Ug(\lambda) \overline{Uh(\lambda)} = \nu_{g,h}'(\lambda), \quad \textrm{a.e. } \lambda \in I, \quad g,h \in H^{1/2}(\Gamma).
	\end{equation}
	
	Let $\mathcal{P}$ be the closed linear span of $\{e_n\}_{n \in N}$ in $H^{1/2}(\Gamma)$. By \eqref{eq:polar}, we then have that $Uf(\lambda) = 0$ for $f \in \mathcal{P}$ and almost every $\lambda \in I$. Therefore, in view of \eqref{eq:Uisometry}, $\mathcal{P} \ni f \mapsto \{Uf(n)\}_{n \in N}$ is a unitary map of $\mathcal{P}$ onto $\ell^2(N)$. Note also that $Ug(n) = 0$ for every $g \in \mathcal{P}^\perp$ and $n \in N$.
	
	We now treat the part of $K \colon H^{1/2}(\Gamma) \to H^{1/2}(\Gamma)$ with absolutely continuous spectrum. By Theorem~\ref{thm:spectrum1} and the spectral theorem, there is a measure $d\nu(\lambda) = \nu'(\lambda) \, d\lambda$ on $I$ and a decreasing sequence $B_2 \supset B_3 \supset \cdots$ of Borel sets such that $K \colon \mathcal{P}^\perp \to \mathcal{P}^\perp$ is unitarily equivalent to
	$$M_\nu \oplus M_{\nu|_{B_2}} \oplus M_{\nu|_{B_3}} \oplus \cdots,$$
	where, for a measure $\tau$, $M_{\tau}$ denotes the operator of multiplication by the independent variable $\lambda$ on $L^2(\tau)$. Let $Y \colon \mathcal{P}^\perp \to L^2(\nu) \oplus L^2(\nu|_{B_2}) \oplus \cdots $ be the unitary that realizes this equivalence. The spectral density of two functions $g, h \in \mathcal{P}^\perp$ is then given by
	\begin{equation} \label{eq:specmeasgen}
	\nu'_{g,h} = ((Yg)_1 \overline{(Yh)_1} + (Yg)_2 \overline{(Yh)_2} + \cdots)\nu',
	\end{equation}
	where the $j$th component $(Yg)_j$ is supported in $B_j$ for every $j \geq 2$ and $g \in \mathcal{P}^\perp$. Choose $g \in H^1(\Gamma)$ such that $\nu_{g,g}'$ is non-zero in $(-(1 - \alpha/\pi), 0)$ and in $(0, 1 - \alpha/\pi)$, and let $\tilde{g}$ be the projection of $g$ onto $\mathcal{P}^\perp$. Since $\nu_{g,g}'$ is real analytic in these two intervals by Theorem~\ref{thm:spectrum1}, it can only vanish on a null set in $I$. Noting that $\nu_{g,g}' = \nu'_{\tilde{g},\tilde{g}}$ and applying \eqref{eq:specmeasgen} we therefore see that $\nu$ must be mutually absolutely continuous with the Lebesgue measure $d\lambda$ in $I$. We may therefore assume that $\nu$ is the Lebesgue measure on $I$, $\nu' \equiv 1$. 
	
	Let 
	$$(\mathcal{P}^\perp)_1 = \{g \in \mathcal{P}^\perp \, : \, (Yg)_j = 0 \textrm{ for } j \geq 2\},$$
	so that $(\mathcal{P}^\perp)_1 \ni g \to (Yg)_1 \in L^2(I)$ is unitary. Combining \eqref{eq:polar} and \eqref{eq:specmeasgen} we then have that
	\begin{equation} \label{eq:Uformula}
	Ug(\lambda) \overline{Uh(\lambda)} = (Yg)_1(\lambda) \overline{(Yh)_1(\lambda)}, \quad \textrm{a.e. } \lambda \in I, \quad g,h \in (\mathcal{P}^\perp)_1.
	\end{equation}
	Fix $h \in (\mathcal{P}^\perp)_1$ as the function such that $(Yh)_1 \equiv 1$. Then, by \eqref{eq:Uformula}, $|Uh(\lambda)| = 1$ for almost every $\lambda$, and
	$$Ug(\lambda) = Uh(\lambda) (Yg)_1(\lambda), \quad \textrm{a.e. } \lambda \in I, \quad g \in (\mathcal{P}^\perp)_1.$$
	 We conclude that $g \ni (\mathcal{P}^\perp)_1 \mapsto Ug|_I \in L^2(I)$ is unitary. Since $g \ni \mathcal{P}^\perp \mapsto Ug|_I \in L^2(I)$ is an isometry by \eqref{eq:Uisometry}, this implies that $(\mathcal{P}^\perp)_1 = \mathcal{P}^\perp$, that $K \colon \mathcal{P}^\perp \to \mathcal{P}^\perp$ is unitarily equivalent to $M_{d\lambda} \colon L^2(I) \to L^2(I)$, and that $U$ is unitary.
\end{proof}

\section{Final remarks} \label{sec:remarks}
\subsection{Limiting absorption principle for $K^\ast$} Theorem~\ref{thm:main2} can be restated as a limiting absorption principle for $K^\ast \colon H^{-1/2}(\Gamma) \to H^{-1/2}(\Gamma)$. For simplicity, suppose that $0 < \alpha < \pi$, $\mim \lambda > 0$ and $\lambda \in \widetilde{\Sigma}_{1,\alpha}$. Let $f_\lambda \in H^{-1/2}(\Gamma)$ be the solution of the equation $(K^\ast - \lambda)f_\lambda = g$, for some given $g \in L^2(\Gamma)$. Since $K^\ast = S^{-1}KS$, we then find that
$$f_\lambda = S^{-1}T^{-1}_\lambda Sg$$
Hence, by Theorem~\ref{thm:Tinvert} and Lemma~\ref{lem:eigenvalues}, $\lambda \to f_\lambda$ has an analytic $H^{-1}(\Gamma)$-valued extension to every $\lambda_0 \in (0, 1-\alpha/\pi)$ which is not an eigenvalue of $K^\ast \colon H^{-1/2}(\Gamma) \to H^{-1/2}(\Gamma)$. Note that
\begin{equation} \label{eq:Kastlimabs}
f_{\lambda_0} = S^{-1}T^{-1}_{\lambda_0} S g = c_{\lambda_0}S^{-1} q_{\lambda_0, \alpha} + h_{\lambda_0},
\end{equation}
where $c_{\lambda_0} \in \mathbb{C}$ and $h_{\lambda_0} \in L^2(\Gamma)$. To make this explicit, one should identify the singular behavior of $S^{-1} q_{\lambda, \alpha} \in H^{-1}(\Gamma)$, $\lambda \in \widetilde{\Sigma}_{1,\alpha}$. It seems quite clear that 
\begin{equation} \label{eq:Ssing}
S p_{\lambda, \alpha} = d_\lambda q_{\lambda, \alpha} + o_{\lambda, \alpha},
\end{equation}
 where $0 \neq d_\lambda \in \mathbb{C}$, $o_{\lambda, \alpha} \in H^1(\Gamma)$, and $p_{\lambda, \alpha} \in H^{-1}(\Gamma)$ is the distribution defined by
\begin{multline*}
\langle p_{\lambda, \alpha}, f \rangle_{L^2(\Gamma)} = \int_0^1 \phi(s) s^{-\mu_{\alpha}(\lambda)-1} ((f_- + f_+)(s) - (f_- + f_+)(0)) \, ds \\ 
+ \frac{(f_- + f_+)(0)}{\mu_{\alpha}(\lambda)}\int_0^1 s^{-\mu_{\alpha}(\lambda)} \phi'(s)\, ds, \quad f \in H^1(\Gamma).
\end{multline*}
Note that if $\mim \lambda > 0$, then $p_{\lambda, \alpha} \in L^1(\Gamma)$ and $p_{\lambda, \alpha}(z_\pm(s)) = s^{-\mu_{\alpha}(\lambda)-1}$ for small $s > 0$. For $\mim \lambda \leq 0$, $p_{\lambda, \alpha}$ is obtained by analytic continuation from $\mim \lambda > 0$. Equation \eqref{eq:Kastlimabs} then reads as
$$f_{\lambda_0} = \frac{c_{\lambda_0}}{d_{\lambda_0}} p_{\lambda_0, \alpha} + \tilde{h}_{\lambda_0},$$
where $\tilde{h}_{\lambda_0} \in L^2(\Gamma)$. 

To check the validity of \eqref{eq:Ssing}, one should perform a Mellin analysis of the single layer potential $S$, with similar details to those presented for $K$ in Section~\ref{sec:mellin}. Calculations yielding the analogue of the expansion \eqref{eq:Kexpansion} can be found in \cite{C83, CS83}. It is perhaps easiest to start with $\mim \lambda > 0$ (and $\lambda \in \widetilde{\Sigma}_{1,
\alpha}$), and to then apply analytic continuation in $\lambda$. The specifics are left to the interested reader.

\subsection{Open problems}
As mentioned in the introduction, the analysis we have carried out also applies to piecewise $C^3$-domains in $\mathbb{R}^2$ with finitely many corners. In principle, it should also be possible to extend the analysis to domains in 3D with certain types of conical points, although many aspects will be significantly more involved. Based on the index theory presented in \cite{HP18}, one would then expect to see pieces of absolutely continuous spectrum of arbitrarily large multiplicity. For domains in 3D with edges, parts of the absolutely continuous spectrum should have infinite multiplicity \cite{Per19}.

For 2D domains with corners, an outstanding question is whether $K \colon H^{1/2}(\Gamma) \to H^{1/2}(\Gamma)$ can have a countably infinite number of eigenvalues. Single corner domains featuring eigenvalues embedded in the essential spectrum, that is, in the absolutely continuous spectrum, were constructed in \cite{LS19}. A numerical example exhibiting embedded eigenvalues, obtained by adding three corners to an ellipse, has been presented in \cite{HKL17}.  However, neither of these examples suggest that there can be infinitely many eigenvalues.
\begin{conjecture}
	Let $\Gamma$ be a piecewise $C^3$ Jordan curve with finitely many corners, whose angles lie strictly between $0$ and $\pi$.  Then $K \colon H^{1/2}(\Gamma) \to H^{1/2}(\Gamma)$ has finitely many eigenvalues.
\end{conjecture}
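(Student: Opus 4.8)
The plan is to reduce the conjecture to a local statement at the spectral thresholds and then treat those thresholds one at a time. By the symmetry of $\sigma(K, H^{1/2}(\Gamma))$ about $0$ (Subsection~\ref{subsec:spec}), together with Theorem~\ref{thm:main1} and the corner-localized analysis of Sections~\ref{sec:mellin}--\ref{sec:NPGamma}, which extends verbatim to a curve with $m$ corners of angles $\alpha_1, \dots, \alpha_m \in (0,\pi)$, the eigenvalues of $K \colon H^{1/2}(\Gamma) \to H^{1/2}(\Gamma)$ form a sequence that is discrete in $(-1,1) \setminus \big(\{0\} \cup \{\pm(1-\alpha_j/\pi)\}_{j=1}^m\big)$ and can accumulate only at $0$ or at one of the finitely many points $\pm(1-\alpha_j/\pi)$. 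Concretely, attaching one singular function $q_{\lambda, \alpha_j}$ to each corner produces an analytic family of Fredholm operators $T_\lambda \colon \mathbb{C}^m \oplus H^1(\Gamma) \to H^1(\Gamma)$, defined on a region whose real boundary points are precisely $0$ and the $\pm(1-\alpha_j/\pi)$, and whose real poles are the eigenvalues of $K$. It therefore suffices to prove that these poles do not accumulate at any of those points.

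At an endpoint $\lambda_0 = 1-\alpha_j/\pi$ the difficulty is only apparent. There $\mu_{\alpha_j}(\lambda) \to 0$, so $q_{\lambda, \alpha_j}$ degenerates into the cut-off $\phi$; by \eqref{eq:Katzero} the map $\widehat{K}_{\alpha_j}$ is two-to-one near $z = 0$, and so $\mu_{\alpha_j}$ has a square-root branch point at $\lambda_0$. On the corresponding two-sheeted cover one should enlarge the singular space by a second function, for instance $\phi(s)\big(s^{-\mu_{\alpha_j}(\lambda)}-1\big)/\mu_{\alpha_j}(\lambda)$, which tends to $-\phi(s)\log s$ as $\lambda \to \lambda_0$, and then repeat the Mellin analysis of Theorem~\ref{thm:mellinanalysis} to show that for $g \in H^1(\Gamma)$ the solution of $(K-\lambda)f_\lambda = g$ decomposes as $c_\lambda^{(1)} q_{\lambda, \alpha_j} + c_\lambda^{(2)}\tilde q_{\lambda, \alpha_j} + \tilde f_\lambda$, $\tilde f_\lambda \in H^1(\Gamma)$, uniformly for $\lambda$ near $\lambda_0$ on the cover. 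Since $K - \lambda_0 \colon H^1(\Gamma) \to H^1(\Gamma)$ is Fredholm (the remark after Theorem~\ref{thm:fredholm}), this yields an analytic Fredholm family through $\lambda_0$, and the analytic Fredholm theorem \cite[Corollary~8.4]{GGK90} then forces the poles, hence the eigenvalues of $K$, to be isolated in a full neighborhood of $\lambda_0$, and thus finite in number there. The point $-\lambda_0$ follows from the $K \leftrightarrow -K$ symmetry, and distinct corners are handled independently thanks to localization. I expect this step to be lengthy but essentially routine, following the template of Sections~\ref{sec:mellin}--\ref{sec:NPGamma}.

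The genuine obstacle is the threshold $\lambda = 0$, which lies in the interior of the essential spectrum $[-\max_j(1-\alpha_j/\pi),\,\max_j(1-\alpha_j/\pi)]$. As $\lambda \to 0$ one has $\mu_{\alpha_j}(\lambda) = it_j(\lambda)$ with $t_j(\lambda) \to -\infty$, because $\widehat{K}_{\alpha_j}(iy) = \sinh((\pi-\alpha_j)y)/\sinh(\pi y) \to 0$ as $y \to -\infty$; hence $(q_{\lambda,\alpha_j})_\pm(s) = \phi(s)e^{i|t_j(\lambda)|\log s}$ oscillates ever more rapidly, the domain of $T_\lambda$ degenerates at $0$, and there is no branch point of finite order to unfold, so one cannot extend $T_\lambda$ analytically through $0$. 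A proof would instead have to control $T_\lambda^{-1}$ directly as $\lambda \to 0$ — presumably after a $\lambda$-dependent rescaling of the logarithmic arc-length variable that absorbs the oscillation — and establish a uniform lower bound showing that the relevant operator pencil stays boundedly invertible, so that no sequence of poles can converge to $0$. Proving such a bound at this oscillatory interior threshold is, I believe, the main obstacle, and is presumably why the result is stated only as a conjecture; the curves of \cite{LS19} and the numerics of \cite{HKL17} produce embedded eigenvalues but show no hint of accumulation at $0$, consistent with the conjecture without resolving it.
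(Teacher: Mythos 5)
There is nothing in the paper to compare your argument against: the statement you were given is stated by the author as a \emph{conjecture}, an open problem, and no proof of it appears anywhere in the paper. Your proposal, read honestly, is also not a proof. What you give is (i) a reduction, via the multi-corner analogue of Theorem~\ref{thm:main1} and of the family $T_\lambda$, of the conjecture to the statement that the eigenvalues do not accumulate at the finitely many thresholds $0$ and $\pm(1-\alpha_j/\pi)$; (ii) a sketched program at the endpoints $\pm(1-\alpha_j/\pi)$; and (iii) an explicit admission that you cannot handle the interior threshold $\lambda=0$. Point (iii) alone means the conjecture is not established: ruling out accumulation at $0$ is precisely the part for which no technique is offered beyond the hope of a ``uniform lower bound'' after rescaling, and as you yourself note, $\mu_{\alpha_j}(\lambda)\to -i\infty$ there, the singular functions $\phi(s)s^{-\mu_{\alpha_j}(\lambda)}$ oscillate without limit, and there is no finite-order branch point to unfold, so the analytic Fredholm machinery of Sections~\ref{sec:mellin}--\ref{sec:NPGamma} gives nothing at $0$. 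A reduction plus a declared obstacle is a research plan, not a proof.

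I would also caution you that step (ii) is less ``routine'' than you suggest. In the paper's own construction the singular coefficient is
$c_\lambda = d\,\hat h(\mu_\alpha(\lambda))\big/\tfrac{d}{dz}\widehat K_\alpha(\mu_\alpha(\lambda))$,
and by \eqref{eq:Katzero} the derivative $\tfrac{d}{dz}\widehat K_{\alpha_j}(z)$ vanishes at $z=0$; so as $\lambda\to 1-\alpha_j/\pi$ this coefficient degenerates at exactly the same rate at which $q_{\lambda,\alpha_j}$ collapses onto the cut-off $\phi$, which is already the function used to absorb the boundary value $f(0)$ in the $W^{1/2}$- and $W^\eta$-decompositions (Lemmas~\ref{lem:sobequiv}, \ref{lem:Rinvert}, \ref{lem:Rinvert2}). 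Your proposed second singular function $\phi(s)\bigl(s^{-\mu_{\alpha_j}(\lambda)}-1\bigr)/\mu_{\alpha_j}(\lambda)$ is a sensible candidate for unfolding the square-root branch point, but making the Mellin estimates of Theorem~\ref{thm:mellinanalysis} uniform as the two roots $\pm\mu_{\alpha_j}(\lambda)$ coalesce with the pole of $\hat f$ at $z=0$ is a genuine threshold analysis (of the same flavour as threshold-resonance analysis for Schr\"odinger operators), not a verbatim repetition of the paper's arguments; and even if carried out it would only localize the problem, leaving the accumulation point $\lambda=0$ untouched. So the concrete gaps are: the endpoint step is asserted rather than proved, and the central step at $\lambda=0$ is missing entirely --- which is precisely why the statement remains a conjecture in the paper.
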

For smooth boundaries $\Gamma$ satisfying certain conditions, it has been demonstrated in the recent preprint \cite{ACL20} that oscillations of the eigenfunctions of the NP operator localize at points of high curvature. A study of this phenomenon for embedded eigenvalues, for domains with singularities, would be very appealing.

Another interesting question is how to recover $\Gamma$ from the spectral data of $K \colon H^{1/2}(\Gamma) \to H^{1/2}(\Gamma)$. For real algebraic $\Gamma$, this question has been successfully considered in \cite{APST19}.

\bibliographystyle{amsplain-nodash} 
\bibliography{NPabscont} 
\end{document}